\def\bar{\overline}\def\tilde{\widetilde}
\def\rank{\mathrm{rank}}
\def\qsrank{\mathrm{qsrank}}
\newcommand{\norm}[1]{\left\lVert#1\right\rVert}
\newcommand{\R}{\mathbb{R}}
\newtheorem{definition}{Definition}[section]
\newtheorem{remark}{Remark}
\newtheorem{example}{Example}
\newtheorem{lemma}[definition]{Lemma}
\newtheorem{theorem}[definition]{Theorem}
\newtheorem{corollary}[definition]{Corollary}
\newcommand{\upd}[1]{{\color{black}#1}} 
\author{
	Stefano Massei\footnote{Department of Mathematics, University of Pisa,  Largo Bruno Pontecorvo, 5, 56127 Pisa, Italy.
		E-mail: stefano.massei@unipi.it. } 
	\and
	Luca Saluzzi\footnote{Centro di Ricerche Matematiche "Ennio De Giorgi", 
Scuola Normale Superiore, Piazza dei Cavalieri, 3,
56126 Pisa, Italy. E-mail: luca.saluzzi@sns.it. \\
		Both authors are members of the INdAM/GNCS research group. L. Saluzzi is part of the INdAM - GNCS Project ``Metodi di riduzione di modello ed approssimazioni di rango basso per problemi alto-dimensionali'' (CUP E53C23001670001) and ``titolare di borsa per l’estero dell'Istituto Nazionale di Alta Matematica”.}}
\title{On the data-sparsity of the solution of Riccati equations with  applications to feedback control}
\date{}
\pgfplotsset{compat=1.15}
\begin{document}
	\maketitle
	
\begin{abstract}
Solving large-scale continuous-time algebraic Riccati equations is a significant challenge in various control theory applications. This work demonstrates that when the matrix coefficients of the equation are quasiseparable, the solution also exhibits numerical quasiseparability. This property enables us to develop  two efficient Riccati solvers. The first solver is applicable to the general quasiseparable case, while the second is tailored to the particular case of banded coefficients. Numerical experiments confirm the effectiveness of the proposed algorithms on both synthetic examples and case studies from the control of partial differential equations and agent-based models.
\end{abstract}

	\noindent \textbf{Keywords} Riccati equation, optimal control problems, 
hierarchical matrices, singular values. \\
	
	\section{Introduction}
The main focus of this work is on continuous-time algebraic Riccati equations (CARE), which are quadratic matrix equations of the form
\begin{equation}\label{eq:care}
\mathcal{R}(X) := A^\top X+XA- XFX+Q=0,	
\end{equation}
where $X\in\mathbb R^{n\times n}$ is the unknown solution, $F,Q\in\mathbb R^{n\times n}$ are symmetric positive semidefinite, and $A\in\mathbb R^{n\times n}$ is such that its \emph{numerical range} 
$$\mathcal W(A):=\{x^HAx:\ x\in\mathbb C^n,\ \norm{x}_2=1\},$$
satisfies $\mathcal W(A)\subset \mathbb C^{-}:=\{z\in\mathbb C:\ \Re(z)<0\}$.
Due to its nonlinearity, equation \eqref{eq:care} has many solutions but, under these assumptions, there exists a unique real symmetric positive semidefinite solution $X$. The latter is usually called the \emph{stabilizing solution} as it has the additional property that the so called \emph{closed-loop matrix} $A-FX$ is stable, i.e., it has eigenvalues with non positive real parts \cite[Theorem 2.18]{bini2011numerical}. Stabilizing solutions play a key role in a number of control theory applications such as linear-quadratic optimal regulators~\cite{sima, lancaster}, linear-quadratic Gaussian balancing~\cite{benner2009,breiten2015feedback}, and  $\mathcal H_2$/$\mathcal H_{\infty}$ feedback synthesis problems~\cite{alla2023,dolgov2023}. When $n$ is small the numerical computation of the stabilizing solution requires the eigen- or \upd{Schur} decomposition of the Hamiltonian matrix $\left[\begin{smallmatrix}
	A&-F\\ -Q&-A^\top 
\end{smallmatrix}\right]$~\cite{bini2011numerical}. For large $n$, the latter tasks are too expensive and the treatable scenarios are those where one can exploit additional structure in the coefficients, e.g. sparsity in $A$, and $F,Q$ of low-rank. See \cite{benner20} for an overview, and comparison of the large-scale methods for CAREs. 

In the first part of this work we  study the theoretical properties and the computation of the stabilizing solution of \eqref{eq:care}, in the case where the matrix coefficients have additional rank structures. More specifically, we consider the case when $n$ is possibly large, and $A,F,Q$ are \emph{quasiseparable matrices}~\cite{vandebril08}, i.e., all their offdiagonal blocks are low-rank. A particular case of this scenario is when all these matrices are banded. We remark that, we will often assume that $F$ is full rank, and this is a different situation than the usual setting of linear-quadratic optimal control, where the ranks of $F$, and $Q$ are small as they correspond to the number of inputs and outputs of the system, respectively. Within this setting, we provide decay bounds for the singular values of the offdiagonal blocks of $X$, justifying the approximability of $X$ by a quasiseparable matrix. To derive these bounds we relate a generic offdiagonal block with a Sylvester equation with low-rank right-hand side, and then exploit singular values upper bounds for the solution of this kind of equations~\cite{beckermann17}. Our results establish a link between the rate of decay of the offdiagonal singular values and certain rational approximation \upd{problems}, known as Zolotarev problems~\cite{beckermann17}, involving the set $\mathcal W(L^{-1}AL)$, where $F=LL^\top$ is a Cholesky factorization of $F$. To the best of our knowledge these are the first theoretical results showing the numerical quasiseparability of the CARE solution, in case of a full rank quadratic coefficient. In addition, in our Theorem~\ref{thm:quasi-zol} we improve the upper bound in \cite[Theorem 2.7]{massei18} that concerns the numerical quasiseparability of the solution of Sylvester equations with quasiseparable coefficients. As a byproduct of this analysis, we improve and enlarge the scope of existing upper bounds for the \emph{tensor train ranks} (TT ranks) of the value function $V(\mathbf y):=\mathbf y^\upd{\top} X\mathbf y$ associated with the solution of the CARE~\cite[Theorem 3.1]{dolgov21}. 

From the algorithmic view point, we propose two fast Riccati solvers: Algorithm~\ref{alg:dac_care} that applies to CAREs with quasiseparable coefficients, and Algorithm~\ref{alg_NK} that is specific to the banded case. The former method exploits the representation of $A,F,$ and $Q$, in the \emph{\upd{hierarchically} semiseparable format} (HSS)~\cite{xia10}, and is based on a divide-and-conquer scheme, similarly to other recent solvers for matrix equations with hierarchically low-rank coefficients~\cite{kressner19,kressner20}. Algorithms that provide an approximation of the CARE solution in a hierarchical matrix format were also proposed in \cite{grasedyck08,grasedyck03}. The method for the banded coefficients case aims at providing a sparse approximation of $X$, by means of an inexact Newton-Kleinman iteration (NK)~\cite{kleinman1968iterative} combined with a thresholding mechanism that keeps under control the level of sparsity of the iterates. Specifically, each Newton step is executed inexactly by conducting a limited number of GMRES iterations. Subsequently, using a greedy approach, we truncate diagonals while ensuring that the residual norm remains below a desired threshold. We establish conditions on the number of GMRES iterations and the truncation strategy that are sufficient to guarantee the convergence of Algorithm~\ref{alg_NK} to the stabilizing solution; the latter are built on existing results about the convergence of inexact NK iterations, i.e., \cite[Theorem 4.3]{feitzinger2009inexact}, and \cite[Theorem 10]{benner2016inexact}.  Under reasonable assumptions, both Algorithm~\ref{alg:dac_care}, and Algorithm~\ref{alg_NK}, have at most a linear-logarithmic complexity; when the NK iterate remains sufficiently banded, the second procedure provides a significant speed-up thanks to the use of sparse arithmetic.
We remark that an algorithm that provides banded approximants of $X$  was also proposed in \cite{haber18}; the latter procedure is based on an inexact NK iteration where the Newton step is rephrased as a Lyapunov equation, and approximated  with a truncated Taylor expansion of the integral representation of the solution. 

The last contribution of the paper is the application of the proposed methods to infinite horizon optimal control problems with quasiseparable structure, that appear in the context of optimal control for partial differential equations where the control operates over the entire spatial domain, and the objective is to minimize a cost related to the solution over the same domain. Another relevant example is the control of agent-based models; there, the dynamics pertain to the velocity and acceleration of the agents, with each agent possessing its own control, aimed at minimizing a global quantity, such as achieving consensus ~\cite{cucker2007emergent,bailo2018optimal}.  The numerical solution of large-scale instances of these problems is not doable with current state-of-the-art Riccati solvers, such as RADI~\cite{radi18}, and the rational Krylov subspace method~\cite{simoncini16b}, as they rely on the existence of accurate low-rank approximants. The next section provides more details about this application. 
\subsection{Infinite horizon problems via State-Dependent Riccati equations}\label{sec:intro-appl}

The infinite horizon problem, along with the associated synthesis of the feedback law, involves a dynamical system, which we assume to be in the following control affine form:
\begin{equation*}\label{eq}
	\left\{ \begin{array}{l}
		\dot{y}(s)=f(y(s)) + B(y(s))u(s), \;\; s\in(0,+\infty),\\
		y(0)=x\in\mathbb{R}^n.
	\end{array} \right.
\end{equation*}
Here, $y:[0,+\infty)\rightarrow\R^n$ denotes the state of the system, $u:[0,+\infty)\rightarrow\R^m$ represents the control signal,  $\mathcal{U}=L^\infty ([0,+\infty);U)$ is the set of admissible controls, with $U\subset \R^m$, $f:\R^n\rightarrow\R^n$ defines the system dynamics, and $B$ is a matrix-valued function of the state.

\noindent The goal is to determine a control that minimizes the \emph{infinite horizon cost functional} 
\begin{equation*}
	J_{\infty}(u;x) := \int\limits_0^{+\infty} y(t)^\top Q y(t) + u(t)^\top R u(t)\, dt \,,
	\label{quadratic_cost}
\end{equation*}
where $Q \in \mathbb{R}^{n\times n}$ is symmetric positive semidefinite and $ R \in \mathbb{R}^{m\times m}$ symmetric positive definite. Moreover, we want the optimal control to be given in feedback form, i.e., a control signal that depends only on the current state of the system. One approach involves solving the Hamilton-Jacobi-Bellman (HJB) equation, a first-order nonlinear PDE defined over $\R^n$~\cite{bardi1997optimal}. However, the fully non linear and, possibly, high dimensional nature of HJB equations can pose significant computational challenges making this approach not always viable in a large-scale scenario. 

Instead of directly addressing the HJB equations, we will use a faster but suboptimal alternative: the State-Dependent Riccati Equation (SDRE) \cite{ccimen2008state}.
This method involves iteratively solving linear-quadratic control problems arising from the gradual linearization of dynamics along a trajectory; this means that locally the system dynamics is expressed in semilinear form as:
\begin{align*}
	\dot{y}(t) & = A(y(t)) y(t) +B(y(t)) u(t) \\
	y(0) & = x\,.
	\label{semilinear}
\end{align*}

In the particular case where all the matrix coefficients are constant in the state, i.e.,  $A(y(t)) = A \in \mathbb{R}^{n\times n}$ and  $B(y(t)) = B \in \mathbb{R}^{n\times m}$,  we encounter the Linear Quadratic Regulator (LQR) problem; if the pair $(A,B)$ is stabilizable and the pair $(A,Q^{1/2})$ is detectable, the optimal feedback control for the LQR is given by the formula \cite{anderson2007optimal}:
\begin{equation*}
	u(y) = -R^{-1} B^{\top} Xy,
	\label{control_SDRE}
\end{equation*}
where $X\in\mathbb{R}^{n\times n}$ is the unique positive definite solution of the CARE
\begin{align*}
	A^\top X + X A
	-X BR^{-1}B^\top X +Q = 0\ .
\end{align*}
The SDRE technique extends this approach by introducing state dependence, resulting in:
\begin{equation}
	u(y) = -R^{-1} B^{\top}(y) X(y)y\,,
	\label{control_sdre_feed}
\end{equation}
where $X(y)$ now solves a State-Dependent Riccati Equation (SDRE)
\begin{align}
	A^\top(y) X(y) + X(y) A(y)-X(y)B(y)R^{-1}B^\top(y)X(y)+Q & = 0,
	\label{sdre}
\end{align}
and $A(y),B(y)$ are fixed at the state $y$.
This procedure iterates along the trajectory, solving \eqref{sdre} sequentially as the state $y(t)$ evolves over time.
Under appropriate stability assumptions, it can be shown that the closed-loop dynamics generated by the feedback law \eqref{control_sdre_feed} are locally asymptotically stable (for further details and the exact statement, see \cite{Banks_Lewis_Tran_2007}).

In this work, we aim at lowering the computational effort of the SDRE approach when solving  optimal control problems where the matrices $A(y),B(y),R$, and $Q$ are quasiseparable, and possibly full-rank, for all states $y$.


%
%
%
%
%

\subsection{Synopsis and notation}
The remainder of the paper is organized as follows. In Section~\ref{sec:quasi-riccati} we recall some
results on singular values decay for linear matrix equations and we present new decay bounds
for the offdiagonal singular values of the solution of CAREs with quasiseparable coefficients. Section~\ref{sec:tt} shows how our results also provide tighter estimates for the TT ranks of value functions. In Section~\ref{sec:quasi-care-solve} we propose and test a divide-and-conquer method (Algorithm~\ref{alg:dac_care}) for CAREs with quasiseparable coefficients. Section~\ref{sec:banded} focuses on the banded coefficients case. A truncated inexact Newton-Kleinman iteration (Algorithm~\ref{alg_NK}) is proposed and analysed in Section~\ref{sec:tink}, and some numerical tests, including a comparison with the divide-and-conquer solver, are reported in Section~\ref{sec:tink-tests}. In Section~\ref{sec:feedback_appl} we present numerical results concerning infinite horizon optimal control problems, and finally, in Section~\ref{sec:conclusion}
we draw some conclusions.

Sometimes we use Matlab-like notation to denote the submatrices of a certain matrix $M$, e.g., we write $M(I, J)$ for the submatrix corresponding to the subsets $I,J$ of row and column indices, and $M(i_{\mathrm{start}}:i_{\mathrm{end}},j_{\mathrm{start}}:j_{\mathrm{end}})$ to indicate submatrices corresponding to contiguous index sets.
Given a square matrix $M$, we denote by $\kappa(M)$ its 2-norm condition number and, if $M$ is diagonalizable, we indicate with $\kappa_{\mathrm{eig}}(M)$ the 2-norm condition number of an eigenvector matrix of $M$. For a symmetric matrix $M$ the symbols $\lambda_{\min}(M)$ and $\lambda_{\max}(M)$ refer to the minimum and maximum eigenvalues of $M$, respectively. Finally, we use $\Re(z)$, and $\Im(z)$ to denote the real and imaginary part of $z\in\mathbb C$.

\section{Riccati equation with quasiseparable coefficients}\label{sec:quasi-riccati}
In many applications, the matrices $A,F$, and $Q$ in \eqref{eq:care} are banded or have offdiagonal blocks with low-rank. Herein, we analyze when $X$ inherits such a structure from the coefficients of the CARE. More precisely, throughout this section we assume that the matrices $A,F,Q$ are \emph{quasiseparable}; let us recall the latter notion and some of its basic properties.
\begin{definition}
Let $M\in\mathbb R^{n\times n}$ and indicate with 
$$
\mathrm{Off}(M):=\{M(s+1:n,1:s):\ s=1,\dots n-1\}\ \cup\ \{M(1:s,s+1:n):\ s=1,\dots n-1\}, 
$$ 
the set of maximal submatrices of $M$ that are contained in the strictly lower or upper triangular  part of $M$. 
The matrix $M$ is said to be \emph{quasiseparable} of order $r$ (or quasiseparable of rank $r$) if $$\max_{\widetilde M\in\mathrm{Off(M)}}\rank(\widetilde M)=r,$$
and we write $\qsrank(M)=r$.
\end{definition} 

We remark that any $n\times n$ matrix is quasiseparable of order at most $\lfloor\frac{n}{2}\rfloor$, but quasiseparability is of interest, from the computational point of view, only when the order is significantly lower than $n$. There are many properties describing the behaviour of the $\qsrank$ under matrix operations; we recall the ones that will be used in this section and we refer to \cite{vandebril08} for their proofs.
\begin{itemize}
	\item The quasiseparable order is subadditive with respect to the matrix sum and product, i.e., $\qsrank(M_1+M_2)\le \qsrank(M_1)+\qsrank(M_2)$ and $\qsrank(M_1\cdot M_2)\le \qsrank(M_1)+\qsrank(M_2)$. Actually, a slightly stronger property holds; if we define the lower and upper quasiseparable orders as the maximum rank among the submatrices in the lower and upper triangular parts, respectively, then the lower and upper quasiseparable orders are subadditive with respect to the matrix sum and product.
	\item If the matrix $M$ is invertible, then $\qsrank(M^{-1})=\qsrank(M)$.
		\item If $M$ is a real symmetric positive definite matrix and $L$ is its Cholesky factor ($M=LL^\top$), then $\qsrank(M)=\qsrank(L)$.
\end{itemize}
In Section~\ref{sec:decay-care} we will show that when $A,F,Q$ are quasiseparable with a low order, the singular values of the matrices in $\mathrm{Off}(X)$ exhibit a rapid decay, and this ensures that $X$ is well approximated with a quasiseparable matrix of low rank. The theoretical argument, used in the proof of the latter property, is based on some results about the singular values of the solutions of Sylvester equations that we are going to recall in the next section.
\subsection{Singular values decay for linear matrix equations}\label{sec:decay-sylv}
Linear matrix equations of the form
\begin{equation}\label{eq:sylv}
AX+XB= Q, \qquad A,B,X,Q\in\mathbb C^{n\times n}
\end{equation}
are known in the literature as \emph{Sylvester equations}, and have been widely studied in view of their relevant role in many applications, \upd{see e.g.} \cite{simoncini16}. In the case where $Q$ is low-rank and the spectra of $A$ and $-B$ are disjoint, it is possible to show that the singular values of the solution $X$ of \eqref{eq:sylv}, decay rapidly towards zero. The rate of the latter decay can be estimated with the optimal value of the following rational approximation problem
$$
Z_h(E,F) := \min_{g(z)\in\mathcal R_{h,h}}\frac{\max_{z\in E} |g(z)|}{\min_{z\in F} |g(z)|}, 
$$
where $\mathcal R_{h,h}$ denotes the set of rational functions with both numerator and denominator of degree at most $h$, and $E,F$ are disjoint subsets of the complex plane containing either the numerical ranges or the spectra of $A$ and $-B$, respectively. More precisely, the following result links $Z_h(E, F)$ with the low-rank approximability of $X$.
\begin{theorem}[Theorem 2.1 in \cite{beckermann17}]\label{thm:zol}
Let $X$ be the solution of $AX+XB=Q$, with $Q$ of rank $t$, and let 
	$E$ and $F$ be disjoint compact sets in the complex plane.
	\begin{itemize}
		\item[$(i)$] If $E,F$ contain
		the numerical ranges of $A$ and $-B$, respectively, then
		\[
		\frac{\sigma_{th+1}(X)}{\norm{ X}_2}\leq  K_C\,\cdot Z_h(E,F),\qquad h=1,2,\dots,
		\]
		where $K_C = 1$ if $A,B$ are normal matrices and $1\le K_C \le (1+\sqrt{2})^2$ otherwise.
		\item[$(ii)$] If $A,B$ are diagonalizable and $E,F$ contain
		the spectra of $A$ and $-B$, respectively, then
		\[
		\frac{\sigma_{th+1}(X)}{\norm{ X}_2}\leq  \kappa_{\mathsf{eig}}(A)\cdot \kappa_{\mathsf{eig}}(B)\,\cdot Z_h(E,F),\qquad h=1,2,\dots\ .
		\]
	\end{itemize}
\end{theorem}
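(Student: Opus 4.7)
The plan is to build an explicit low-rank approximation of $X$ via an ADI-style rational iteration, and then to estimate its residual through matrix-valued rational calculus. Specifically, I would fix a rational function $r(z) = \prod_{i=1}^{h} \frac{z - p_i}{z - q_i} \in \mathcal R_{h,h}$, whose zeros $\{p_i\}$ and poles $\{q_i\}$ play the role of shift parameters. Writing $Q = UV^\top$ with $U, V$ having $t$ columns, the $h$-th iterate $X_h$ of the factored Alternating Direction Implicit (ADI) scheme applied to \eqref{eq:sylv} takes the form $X_h = \widehat U \widehat V^\top$, where $\widehat U$ and $\widehat V$ are built from the columns of $U$ and $V$ by repeated application of shifted resolvents of $A$ and $B$. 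In particular, $\rank(X_h) \leq th$.

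A standard induction on $h$ then yields the residual factorization
\[
X - X_h = r(A)\, X\, r(-B)^{-1},
\]
and combining this with the Eckart--Young inequality gives
\[
\sigma_{th+1}(X) \leq \norm{X - X_h}_2 \leq \norm{r(A)}_2 \cdot \norm{r(-B)^{-1}}_2 \cdot \norm{X}_2.
\]
It then remains to bound the two operator norms independently and take the infimum over admissible $r$.

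For part $(i)$, if $A, B$ are normal the spectral theorem gives $\norm{r(A)}_2 \leq \max_{z \in E}|r(z)|$ and $\norm{r(-B)^{-1}}_2 \leq \max_{z \in F}|1/r(z)|$, producing $K_C = 1$; in the general case, Crouzeix--Palencia provides the same bounds up to a factor $(1+\sqrt{2})$ on each norm, yielding $K_C \leq (1+\sqrt{2})^2$. For part $(ii)$, diagonalizing $A$ and $B$ gives $\norm{r(A)}_2 \leq \kappa_{\mathsf{eig}}(A)\max_{z \in E}|r(z)|$ and analogously for $r(-B)^{-1}$. Minimizing the resulting bound over $r \in \mathcal R_{h,h}$ produces the Zolotarev number $Z_h(E,F)$ in all cases. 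The main obstacle I foresee is the residual factorization step: one must verify by induction that the factored ADI scheme produces a residual of exactly the product form $r(A)\,X\,r(-B)^{-1}$, and confirm that as the shift parameters vary freely in $\mathbb C$, the resulting family of rationals exhausts $\mathcal R_{h,h}$, so that the infimum of the bound coincides with $Z_h(E,F)$ rather than with a minimum over some strict subclass.
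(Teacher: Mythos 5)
Your argument is correct and lands on the same key identity as the source, but by a genuinely different derivation. Note first that Theorem~\ref{thm:zol} is quoted from \cite{beckermann17} without proof; the paper only reproduces the relevant argument inside the proof of Theorem~\ref{thm:quasi-zol}. There, the polynomial identity $p(A)Xq(B)-q(A)Xp(B)=\sum_{i,j=0}^{h-1}c_{ij}A^iQB^j$ (obtained by telescoping the equation $AX+XB=Q$) is divided on the left by $p(A)$ and on the right by $q(B)$ to give $X-g(A)Xg(B)^{-1}=\sum_{i,j}d_{ij}(p_iI-A)^{-1}Q(q_jI-B)^{-1}$, whose column space lies in the span of the $th$ columns of $(p_iI-A)^{-1}U$, $i=0,\dots,h-1$, where $Q=UV^\top$. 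You instead construct the rank-$th$ correction explicitly as the $h$-th factored ADI iterate and obtain the residual factorization $X-X_h=r(A)\,X\,r(-B)^{-1}$ by induction; from that point on (Eckart--Young, spectral theorem or Crouzeix--Palencia or eigenvector conditioning, minimization over $r$) the two arguments coincide, including the constants $K_C=1$, $K_C\le(1+\sqrt2)^2$, and $\kappa_{\mathsf{eig}}(A)\kappa_{\mathsf{eig}}(B)$. The ADI route buys a constructive, computable approximant and makes the rank count transparent ($t$ new columns per shift), but it obliges you to settle the two caveats you flag: the induction for the residual formula (which is classical and does go through, provided the poles avoid the spectrum of $A$ and the zeros avoid that of $-B$ so the resolvents and $r(-B)^{-1}$ exist), and the fact that the ADI shifts only generate quotients of monic degree-$h$ polynomials. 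The latter is harmless because $\max_{E}|r|/\min_{F}|r|$ is invariant under scaling $r$ by a constant and lower-degree rationals arise by sending zeros or poles to infinity, so the infimum over the ADI class equals $Z_h(E,F)$; the paper's algebraic identity sidesteps both issues at once by starting from arbitrary polynomials $p,q$ of degree at most $h$.
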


The quantity  $Z_h(E,F)$ is called the $h$-th \emph{Zolotarev number} for the sets $E$ and $F$. When the latter are well separated, e.g., separated by a line, $Z_h (E, F )$
decreases rapidly as $h$ increases. Exponentially decaying bounds for  $Z_h (E, F )$ have been established for various
configurations of $E$ and $F$, including the case of real intervals and discs, see \cite{beckermann17} and \cite[Section 2]{kressner23}.

Theorem~\ref{thm:zol} has been generalized to the setting $A,B,$ and $Q$ quasiseparable, by showing that, in the latter case, any offdiagonal block of $X$ can be written as the solution of a Sylvester equation with low-rank right-hand side \cite[Theorem 2.7]{massei18}. This implies that $X$ is numerically quasiseparable and its offdiagonal singular values decay accordingly to certain Zolotarev numbers. By following a different argument, similar to the one used in the proof of Theorem~\ref{thm:zol}, we can improve on the bound given in \cite[Theorem 2.7]{massei18}.
\begin{theorem}\label{thm:quasi-zol}
Let $X$ be the solution of $AX+XB=Q$, with $A,B$, and $Q$ quasiseparable matrices of orders  $r_a,r_b$, and $r_q$, respectively, and let 
$E$ and $F$ be disjoint compact sets in the complex plane. 
\begin{itemize}
	\item[$(i)$] If $E,F$ contain
	the numerical ranges of $A$ and $-B$, respectively, then the singular values $\sigma_i(M)$ of any $M\in\mathrm{Off}(X)$ satisfy
	\[
	\frac{\sigma_{(r_a+r_b)h+r_q+1}(M)}{\norm{ X}_2}\leq  K_C\,\cdot Z_h(E,F),\qquad h=1,2,\dots, 
	\]
	where $K_C = 1$ if $A,B$ are normal matrices and $1\le K_C \le (1+\sqrt{2})^2$ otherwise.
	\item[$(ii)$] If $A,B$ are diagonalizable and $E,F$ contain
	the spectra of $A$ and $-B$, respectively, then the singular values $\sigma_i(M)$ of any $M\in\mathrm{Off}(X)$ satisfy
	\[
	\frac{\sigma_{(r_a+r_b)h+r_q+1}(M)}{\norm{ X}_2}\leq  \kappa_{\mathsf{eig}}(A)\cdot \kappa_{\mathsf{eig}}(B)\,\cdot Z_h(E,F),\qquad h=1,2,\dots\ .
	\]
\end{itemize}	
\end{theorem}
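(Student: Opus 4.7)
The plan is to mirror the proof of Theorem~\ref{thm:zol}, but at the level of a Sylvester equation satisfied by the off-diagonal block. I would first reduce to the case $M = X(s+1:n, 1:s) \in \mathrm{Off}(X)$ (the strictly upper case being symmetric), partition $A, B, X, Q$ into $2\times 2$ blocks at row/column $s$, and read off the $(2,1)$-block equation
\[
A_{22} M + M B_{11} = Q_{21} - A_{21} X_{11} - X_{22} B_{21} =: R.
\]
Quasiseparability of $A, B, Q$ ensures $\mathrm{rank}(A_{21}) \le r_a$, $\mathrm{rank}(B_{21}) \le r_b$, and $\mathrm{rank}(Q_{21}) \le r_q$, so $R$ decomposes as $Q_{21} + D$ with $\mathrm{rank}(D) \le r_a + r_b$.

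The core of the proof is the construction of an ADI-like approximation of $M$ whose rank is $(r_a + r_b) h + r_q$ rather than the naive $(r_a + r_b + r_q) h$ arising from a direct appeal to Theorem~\ref{thm:zol}. For shifts $\{\mu_j, \eta_j\}_{j=1}^h$ and $r(z) = \prod_{j=1}^h (z-\mu_j)/(z-\eta_j) \in \mathcal R_{h,h}$, the standard ADI identity
\[
M - M_h = r(A_{22}) \, M \, r(-B_{11})^{-1}
\]
holds for an iterate $M_h$ started at $M_0 = 0$. The idea is to build $M_h = N + Z_h$ by handling the two pieces of $R$ differently: $N$ is a rank-$r_q$ contribution that absorbs $Q_{21}$ once, without being amplified through the $h$ ADI sweeps, while $Z_h$ is the $h$-step ADI iterate applied to the sub-equation with right-hand side $D$ of rank $r_a + r_b$, which contributes at most $(r_a + r_b) h$ to the rank.

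The error bound is then obtained from $\| r(A_{22}) M r(-B_{11})^{-1} \|_2 \le \| r(A_{22})\|_2\,\|r(-B_{11})^{-1}\|_2\,\|X\|_2$. For part (i), Crouzeix's inequality combined with $\mathcal W(A_{22}) \subseteq \mathcal W(A) \subseteq E$ and $\mathcal W(-B_{11}) \subseteq \mathcal W(-B) \subseteq F$ yields $\|r(A_{22})\|_2 \le (1 + \sqrt 2) \max_{z \in E} |r(z)|$ and $\|r(-B_{11})^{-1}\|_2 \le (1 + \sqrt 2) / \min_{z \in F} |r(z)|$ (both constants equal to $1$ when $A, B$ are normal). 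Minimising over $r \in \mathcal R_{h,h}$ introduces the Zolotarev number $Z_h(E, F)$, giving $K_C \le (1+\sqrt 2)^2$. For part (ii) I would replace Crouzeix's inequality by the standard diagonalisation bound $\|r(A_{22})\|_2 \le \kappa_{\mathsf{eig}}(A) \max_{z \in \mathrm{spec}(A)} |r(z)|$ and its counterpart for $B$. The main obstacle is expected to be the rank bookkeeping in the ADI construction: rigorously showing that the rank-$r_q$ piece $Q_{21}$ can be incorporated into $M_h$ as a single rank-$r_q$ summand while preserving the global residual $r(A_{22}) M r(-B_{11})^{-1}$, thereby replacing $r_q h$ by $r_q$ in the final singular value index.
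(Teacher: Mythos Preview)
Your localisation to the sub-block equation $A_{22}M+MB_{11}=R$ creates two genuine problems that the paper's approach avoids.  First, part~(ii) breaks: you invoke $\|r(A_{22})\|_2\le \kappa_{\mathsf{eig}}(A)\max_{z\in E}|r(z)|$, but a principal submatrix $A_{22}$ of a diagonalisable $A$ need not be diagonalisable, its spectrum is in general \emph{not} contained in $\mathrm{spec}(A)\subseteq E$, and its eigenvector condition number bears no relation to $\kappa_{\mathsf{eig}}(A)$.  Even in part~(i) the claim $K_C=1$ for normal $A,B$ fails, since normality does not pass to principal submatrices.  Second, the rank obstacle you flag is real and does not dissolve by splitting $R=Q_{21}+D$: if $M=M'+M''$ with $A_{22}M'+M'B_{11}=Q_{21}$, then any rank-$r_q$ surrogate $N$ for $M'$ leaves an error $(M'-N)+r(A_{22})M''r(-B_{11})^{-1}$, and $\|M'-N\|$ has no reason to be small because $M'$ is generically full rank.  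Running ADI honestly on the block equation gives only the old index $(r_a+r_b+r_q)h$.

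The paper never passes to submatrices of $A$ or $B$.  It works with the global identity $X-g(A)Xg(B)^{-1}=\sum_{i,j}d_{ij}(p_iI-A)^{-1}Q(q_jI-B)^{-1}=:Y$, so the error norm is bounded through $\|g(A)\|_2\|g(B)^{-1}\|_2$ on the \emph{full} matrices---this is exactly what makes part~(ii) and the normal case go through.  The improved rank count then comes from analysing the $(2,1)$-block of $Y$ directly: the off-diagonal blocks of the resolvents $(p_iI-A)^{-1}$ and $(q_jI-B)^{-1}$ have rank at most $r_a$ and $r_b$ (quasiseparability is preserved under shifted inversion), while the single block $Q_{21}$ contributes a fixed $r_q$-dimensional range independently of $i,j$.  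This structural information about the resolvents of $A$ and $B$ is precisely what is lost once you restrict to $A_{22}$ and $B_{11}$.
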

\begin{proof}
By following the same steps of the proof of \cite[Theorem 2.1]{beckermann17}, we have that for any pair of polynomials $p(z),q(z)$ of degree at most $h$, it holds
\begin{align*}
p(A)Xq(B)-q(A)Xp(B)&=\sum_{i,j=0}^{h-1} c_{ij}A^iQB^j\\
\Rightarrow X - g(A)Xg(B)^{-1}&=\sum_{i,j=0}^{h-1} c_{ij}\frac{A^i}{p(A)}Q\frac{B^j}{q(B)}\\
&=\sum_{i,j=0}^{h-1} d_{ij}(p_i I -A)^{-1}Q(q_jI-B)^{-1}=:Y,
\end{align*}
where $p_i$ and $q_j$ are the roots of the polynomials $p(z)$, and $q(z)$, respectively, $g(z)=q(z)/p(z)$, and $d_{ij}$ are some scalar coefficients. Each offdiagonal block of $Y$ is a linear combination of the corresponding offdiagonal blocks in $(p_i I -A)^{-1}Q(q_jI-B)^{-1}$, for $i,j=0,\dots, h-1$. In particular, by considering the splittings
$$
Y=\begin{bmatrix}
	Y_{11}& Y_{12}\\
	Y_{21}&Y_{22}
\end{bmatrix},\quad 
Q=\begin{bmatrix}
	Q_{11}& Q_{12}\\
	Q_{21}&Q_{22}
\end{bmatrix},\quad (p_i I - A)^{-1}=\begin{bmatrix}
A^{(i)}_{11}& A^{(i)}_{12}\\
A^{(i)}_{21}&A^{(i)}_{22}
\end{bmatrix},\quad (q_jI-B)^{-1}=\begin{bmatrix}
B^{(j)}_{11}& B^{(j)}_{12}\\
B^{(j)}_{21}&B^{(j)}_{22}
\end{bmatrix},
$$ 
we have that
$$
Q(q_jI-B)^{-1}=\begin{bmatrix}
	\star &\star\\
Q_{21}B^{(j)}_{11}+ Q_{22}B^{(j)}_{21}	&\star
\end{bmatrix},
$$
implying that, for $j=0,\dots, h-1$, the $(2,1)$ block of $Q(q_jI-B)^{-1}$ belongs to the range of at most $r_q+h\cdot r_b$ columns. Combining the latter property with the expression
$$
(p_iI -A)^{-1}Q(q_jI-B)^{-1}=\begin{bmatrix}
	\star &\star\\
	A^{(i)}_{21}(Q_{11}B^{(j)}_{11}+ Q_{12}B^{(j)}_{21}) +A^{(i)}_{22}(Q_{21}B^{(j)}_{11}+ Q_{22}B^{(j)}_{21})	&\star
\end{bmatrix},
$$
we see that $Y_{21}$ has rank at most $h(r_a+r_b)+r_q$; since an analogous argument applies to $Y_{12}$, we have that $Y$ is quasiseparable of order at most $h(r_a+r_b)+r_q$. The claim follows by applying an Eckart-Young argument to the offdiagonal blocks of $X$, that yields $\sigma_{h(r_a+r_b)+r_q+1}(M)\le\norm{X-Y}_2\le \norm{g(A)}_2\norm{X}_2\norm{g(B)^{-1}}_2$, and minimizing the right-hand side with respect to the choice of the polynomials $p(z),q(z)$.
\end{proof}
Theorem~\ref{thm:quasi-zol} ensures a faster decay rate  for the offdiagonal singular values than \cite[Theorem 2.7]{massei18}; indeed, the latter result uses the same quantity to bound $\sigma_{(r_a+r_b+r_q)h+1}(M)$ (in place of $\sigma_{(r_a+r_b)h+r_q+1}(M)$). Moreover, Theorem~\ref{thm:quasi-zol} applies to the case of diagonalizable coefficients with disjoint spectra; to remove the assumptions on the numerical range of $A$ and $B$ in \cite[Theorem 2.7]{massei18} one should assume additional spectral properties for their principal submatrices.

In the next sections we will leverage Theorem~\ref{thm:zol}, and Theorem~\ref{thm:quasi-zol}, to link the singular values of the offdiagonal submatrices of a CARE solution to certain Zolotarev numbers.
\subsection{Offdiagonal singular values decay for Riccati equations}\label{sec:decay-care}
Let us assume that $A,F,Q$ are real quasiseparable matrices with orders $r_a, r_f$ and $r_q$, respectively,  $A$ be such that $\mathcal W(A)\subset\mathbb C^{-}$, and $F,Q$ symmetric and positive semidefinite. Then,  for a given $k\in\{1,\dots,n-1\}$ we split the coefficients as
$$
A=\underbrace{\left[\begin{smallmatrix}
		A_{11}\\ &A_{22}
\end{smallmatrix}\right]}_{A_0}+\underbrace{\left[\begin{smallmatrix}
		&A_{12}\\ A_{21}
\end{smallmatrix}\right]}_{\delta A}, \quad F=\underbrace{\left[\begin{smallmatrix}
		F_{11}\\ &F_{22}
\end{smallmatrix}\right]}_{F_0}+\underbrace{\left[\begin{smallmatrix}
		&F_{12}\\ F_{21}
\end{smallmatrix}\right]}_{\delta F},\quad Q=\underbrace{\left[\begin{smallmatrix}
		Q_{11}\\ &Q_{22}
\end{smallmatrix}\right]}_{Q_0}+\underbrace{\left[\begin{smallmatrix}
		&Q_{12}\\ Q_{21}
\end{smallmatrix}\right]}_{\delta Q},
$$
where the square diagonal blocks $A_{11},F_{11},Q_{11}$ have size $k\times k$ and $A_{22},F_{22},Q_{22}\in\mathbb R^{(n-k)\times (n-k)}$. We remark that, in this setting, the CARE that we want to solve, reads as
\begin{equation}\label{eq:riccati}
	(A_0+\delta A)^\top X+X(A_0 +\delta A)-X(F_0+\delta F)X+Q_0+\delta Q=0,
\end{equation}
where the matrices containing the offdiagonal blocks have rank bounded by $2$ times the quasiseparable ranks of the corresponding coefficient:
$$
\mathrm{rk}(\delta A)\le 2r_a,\qquad \mathrm{rk}(\delta F)\le 2r_f,\qquad \mathrm{rk}(\delta Q)\le 2r_q.
$$
Similarly to \cite{kressner20}, we study the influence of these low-rank parts of the coefficients on the solution; more precisely we compare $X$ with the symmetric and positive semidefinite solution $X_0$  of the Riccati equation associated with the block diagonal part of the coefficients:
\begin{equation}\label{eq:riccati_diag}
	A_0^\top X_0+X_0A_0-X_0F_0X_0+Q_0=0.
\end{equation}
Note that, as $\mathcal W(A_0)\subseteq \mathcal W(A)$, and $F_0,Q_0$ are symmetric positive semidefinite, there is a unique symmetric positive semidefinite stabilizing solution $X_0$ of \eqref{eq:riccati_diag}. Moreover, $X_0$ inherits the block diagonal structure of  the coefficients in \eqref{eq:riccati_diag} and can be computed by solving two Riccati equations, one of size $k\times k$ and the other $(n-k)\times (n-k)$. Therefore, showing that the singular values of $\delta X:= X-X_0$ decay rapidly  implies that the offdiagonal blocks $X(k+1:n,1:k)$ and $X(1:k, k+1:n)$ are numerically low-rank.
With this goal in mind, we subtract \eqref{eq:riccati_diag} from \eqref{eq:riccati} and we find that $\delta X$ solves the CARE:
\begin{equation}\label{eq:riccati_correction}
	(A^\top-X_0F)\delta X + \delta X(A-FX_0)-\delta X F \delta X=-\delta Q-\delta A^\top X_0-X_0\delta A +X_0\delta F X_0.
\end{equation}
By applying simple algebraic manipulations to \eqref{eq:riccati_correction} we get
\begin{equation}\label{eq:lyapunov_correction}
	(A_0^\top-X_0F_0)\delta X +\delta X(A-FX)= -\delta Q -\delta A^\top X-X_0\delta A+ X_0\delta FX.
\end{equation}

Equation \eqref{eq:lyapunov_correction} says that $\delta X$ is the solution of a Sylvester equation with a low-rank right-hand side; indeed, even if $\widetilde Q:= -\delta Q -\delta A^\top X-X_0\delta A+ X_0\delta FX$  depends on $\delta X$, we can a priori claim: 
$$
\rank\left(\widetilde Q\right)\leq  4r_a+2r_f+2r_q,
$$
in view of the inequality $\rank(M_1\cdot M_2)\le \min\{\rank(M_1), \rank(M_2)\}$. 
Moreover, the linear coefficients of \eqref{eq:lyapunov_correction} are the closed-loop matrices associated with \eqref{eq:riccati} and \eqref{eq:riccati_diag}, respectively. The latter have  eigenvalues with non positive real parts as $X$, and $X_0$ are stabilizing solutions.  Under the additional assumption that the numerical ranges  of the closed-loop matrices remain in the left part of the complex plane,  we can apply Theorem~\ref{thm:zol} to the CARE setting.
\begin{theorem}\label{thm:sing-decay}
	Let $H\in\mathbb R^{2n\times 2n}$ be of the form
	$$
	H=\begin{bmatrix}
		A&-F\\
		-Q&-A^\top
	\end{bmatrix}
	$$
	where $A\in\mathbb R^{n\times n}$ is  quasiseparable of order $r_a$, $\mathcal W(A)\subset\mathbb C^{-}$, $F,Q\in\mathbb R^{n\times n}$ symmetric positive semidefinite  and quasiseparable of order $r_f$ and $r_q$, respectively, and let $X\in\mathbb R^{n\times n}$ be the symmetric positive semidefinite stabilizing solution of the continuous-time Riccati equation 
	$$
	A^\top X+XA-XFX+Q=0.
	$$
	Moreover, let $H^{(j)},H_{(j)}\in\mathbb R^{2j\times 2j}$ be defined as
	$$
	H^{(j)}=\begin{bmatrix}
		A^{(j)}&-F^{(j)}\\
		-Q^{(j)}&-(A^{(j)})^\top
	\end{bmatrix},\qquad H_{(j)}=\begin{bmatrix}
		A_{(j)}&-F_{(j)}\\
		-Q_{(j)}&-(A_{(j)})^\top
	\end{bmatrix},
	$$
	where $A^{(j)},F^{(j)},Q^{(j)}$  are the principal submatrices of $A,F,Q$ corresponding to the first $j$ indices of rows and columns, and $A_{(j)},F_{(j)},Q_{(j)}$ are those corresponding to the last $n-j$, for $j=1,\dots, n$. Finally, indicate with $X^{(j)}$ and $X_{(j)}$ the symmetric positive semidefinite stabilizing solutions of the Riccati equations associated with $H^{(j)}$ and $H_{(j)}$, respectively. 
	If $E\subset \mathbb C^-$ is such that
	$$
	\mathrm{convex\ hull}\left(\bigcup_{j=1,\dots, n}\mathcal W(A^{(j)} - F^{(j)}X^{(j)}) \quad \cup  \bigcup_{j=1,\dots,n}\mathcal W(A_{(j)} - F_{(j)}X_{(j)})\right)\subset E,
	$$
	then the singular values $\sigma_i(M)$ of any $M\in\mathrm{Off}(X)$ satisfy
	$$
	\frac{\sigma_{ht+1}(M)}{\norm{X}_2}\le (1+\sqrt 2)^2 Z_h(E,-E), \quad h=1,2,\dots, 
	$$
	where $t=4r_a+2r_f+2r_q$.
\end{theorem}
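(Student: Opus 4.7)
The plan is to apply Theorem~\ref{thm:zol}(i) to the Sylvester equation~\eqref{eq:lyapunov_correction}, treating $\delta X = X - X_0$ as its solution, and then to transfer the resulting decay to the offdiagonal blocks of $X$. First I would fix a splitting index $k\in\{1,\dots,n-1\}$ corresponding to the block $M\in\mathrm{Off}(X)$ under consideration. The block-diagonal CARE~\eqref{eq:riccati_diag} admits the unique positive semidefinite stabilizing solution $X_0 = \mathrm{diag}(X^{(k)}, X_{(n-k)})$, and since $X_0$ vanishes on the offdiagonal blocks of the splitting, $M$ coincides with the corresponding block of $\delta X$; in particular $\sigma_i(M)\le\sigma_i(\delta X)$ for every $i$.

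Next I would verify the hypotheses of Theorem~\ref{thm:zol}(i) applied to~\eqref{eq:lyapunov_correction}. The right-hand side has rank at most $t := 4r_a + 2r_f + 2r_q$, as already computed in the excerpt. The right linear coefficient $A - FX$ is precisely the $j = n$ entry in the union defining the convex hull, so $\mathcal W(A - FX) \subseteq E$. The left coefficient $A_0^\top - X_0 F_0$ is the transpose of the block diagonal matrix $A_0 - F_0 X_0 = \mathrm{diag}\bigl(A^{(k)} - F^{(k)}X^{(k)},\, A_{(n-k)} - F_{(n-k)}X_{(n-k)}\bigr)$; its numerical range is the convex hull of the numerical ranges of the two diagonal blocks, and hence is contained in $E$ by assumption, while transposition preserves it because the matrices are real. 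Since $E \subset \mathbb C^-$, the sets $E$ and $-E$ are disjoint, and Theorem~\ref{thm:zol}(i) yields
$$
\sigma_{ht+1}(\delta X) \le (1+\sqrt 2)^2\,\|\delta X\|_2\,Z_h(E, -E),\qquad h = 1, 2, \dots,
$$
the constant $(1+\sqrt 2)^2$ reflecting that the coefficients are generally non-normal.

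The main obstacle is the final step of replacing $\|\delta X\|_2$ by $\|X\|_2$ in the denominator. Combining the display with $\sigma_{ht+1}(M) \le \sigma_{ht+1}(\delta X)$ gives the stated bound provided $\|\delta X\|_2 \le \|X\|_2$. A Loewner-order comparison $X_0 \preceq X$ is not available in general, and even the PSD triangle inequality $\|P - Q\|_2 \le \max(\|P\|_2, \|Q\|_2)$ (valid for symmetric positive semidefinite $P, Q$ since $v^\top(P-Q)v \in [-\|Q\|_2,\|P\|_2]$ for every unit $v$) would only reduce the task to $\|X_0\|_2 \le \|X\|_2$, which may fail when the low-rank couplings $\delta A,\delta F$ provide substantial additional damping to the full dynamics. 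Passing from $\|\delta X\|_2$ to $\|X\|_2$ therefore appears to require a more refined Riccati perturbation argument that controls $\|\delta X\|_2$ in terms of $\|X\|_2$ and the norms of the perturbations $\delta A,\delta F,\delta Q$; once such a bound is secured, the chain $\sigma_{ht+1}(M) \le \sigma_{ht+1}(\delta X) \le (1+\sqrt 2)^2\,\|X\|_2\,Z_h(E, -E)$ completes the argument.
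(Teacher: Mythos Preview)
Your argument matches the paper's proof almost exactly: fix the splitting index, observe that $X_0$ is block diagonal so the offdiagonal block $M$ of $X$ coincides with that of $\delta X$, check that the numerical ranges of both closed-loop coefficients lie in $E$ (using that $A_0 - F_0X_0$ is block diagonal with blocks $A^{(k)} - F^{(k)}X^{(k)}$ and $A_{(n-k)} - F_{(n-k)}X_{(n-k)}$, and that $\mathcal W(M^\top)=\mathcal W(M)$ for real $M$), and apply Theorem~\ref{thm:zol}(i) to~\eqref{eq:lyapunov_correction}. The paper's proof stops there: it simply writes that ``applying Theorem~\ref{thm:zol} to the matrix equation defining $\delta X$ yields the claim,'' without further discussion.

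The normalization issue you flag --- that Theorem~\ref{thm:zol} naturally delivers
\[
\sigma_{ht+1}(\delta X)\le (1+\sqrt 2)^2\,\|\delta X\|_2\,Z_h(E,-E)
\]
rather than the stated bound with $\|X\|_2$ in the denominator --- is not addressed in the paper's proof either. No Riccati perturbation argument is supplied, nor is any inequality of the form $\|\delta X\|_2\le\|X\|_2$ (or $\|X_0\|_2\le\|X\|_2$) invoked; the replacement is treated as tacit. So your concern is legitimate and points to a step that the published argument also leaves unexplained, not to a divergence between your approach and theirs.
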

\begin{proof}
	We prove the statement for an offdiagonal block in the strictly lower triangular part, as the matrix $X$ is symmetric. 
	Let $j\in\{1,\dots, n-1\}$, $M=X(j+1:n, 1:j)$, and consider the splittings
	$$
	A= \underbrace{\begin{bmatrix}
			A^{(j)}\\ & A_{(j)}
	\end{bmatrix}}_{A_0}+\delta A, \quad F= \underbrace{\begin{bmatrix}
			F^{(j)}\\ & F_{(j)}
	\end{bmatrix}}_{F_0}+\delta F,\quad Q= \underbrace{\begin{bmatrix}
			Q^{(j)}\\ & Q_{(j)}
	\end{bmatrix}}_{Q_0}+\delta Q.
	$$
	Then the singular values of $M$ are  upper bounded by the ones of the matrix $\delta X$ that solves the Sylvester equation
	$$
	(A_0^\top-X_0F_0)\delta X +\delta X(A-FX)= -\delta Q -\delta A^\top X-X_0\delta A+ X_0\delta FX,$$
	where $X_0$ is the stabilizing solution of \eqref{eq:riccati_diag}.
	Observe that  $\mathcal W(A_0^\top-X_0F_0)=\mathcal W(A_0-F_0X_0)$ and the matrix $A_0-F_0X_0$ is block diagonal with blocks $A^{(j)}-F^{(j)}X^{(j)}$, and $A_{(j)}-F_{(j)}X_{(j)}$, so  $\mathcal W(A_0-F_0X_0)$ is given by the convex hull of the union of the numerical ranges of its diagonal blocks. Applying Theorem~\ref{thm:zol} to the matrix equation defining $\delta X$, yields the claim. 
\end{proof}

The bound in Theorem~\ref{thm:sing-decay} is implicit as it requires to have a control on the numerical range of the closed-loop matrices; see also the discussion in \cite[Section 2.1]{kressner20}.  
On the other hand, we are going to show that explicit bounds are attainable in the special case $F=I$, and that we can at least remove the dependency on the solution $X$ in the case of a generic quasiseparable symmetric positive definite $F$.
\begin{remark}\label{rem:improv1}
We point out that, by directly considering the $2\times 2$ block partitioning of \eqref{eq:care}, we can rewrite the offdiagonal block $X_{21}$ of $X$ as the solution of the Sylvester equations
$$
(A_{22}^\top-X_{22}F_{22})X_{21}+ X_{21}(A_{11}-F_{11}X_{11})=-Q_{21}-A_{12}^\top X_{11}-X_{22}A_{21}+X_{22}F_{21}X_{11}+X_{21}F_{12}X_{21},
$$
whose right-hand side has rank bounded by $2r_a+2r_f+r_q$. This has the potential to improve the bound in Theorem~\ref{thm:sing-decay}, providing a lower value for the parameter $t$. However, pursuing this direction leads to the difficult task of controlling the spectral properties of the matrices $A_{22}^\top-X_{22}F_{22}$ and $A_{11}-F_{11}X_{11}$, that can not be seen as closed-loop matrices.
\end{remark}
\begin{remark}\label{rem:improv2}
Another possibility to reduce the parameter $t$ in Theorem~\ref{thm:sing-decay}, is to consider different block diagonal/low-rank splittings for the symmetric coefficients in \eqref{eq:care}. For instance, if the $(2,1)$ sub-block of $F$ is given by the rank $r_f$ factorization $UV^\top$, then we can write $F=\widetilde{F}_0+\widetilde{\delta F}$ with
$$
\widetilde{F}_0=\begin{bmatrix}
F^{(j)}+VV^\top\\ &F_{(j)}+ UU^\top
\end{bmatrix},\qquad
\widetilde{\delta F}=\begin{bmatrix}
-VV^\top&VU^\top\\
UV^\top& -UU^\top
\end{bmatrix}=\begin{bmatrix}
-V\\ U
\end{bmatrix}\begin{bmatrix}
V\\-U
\end{bmatrix}^\top.
$$
In particular, $\rank(\widetilde{\delta F})\le r_f$ (while $\rank(\delta F)\le 2r_f$), and $\widetilde F_0$ is again block diagonal and symmetric positive definite. Analogous splittings can be applied to $Q$, and to $A$ when it is symmetric negative definite. However, apart from maintaining the definiteness, it is difficult to control other spectral properties of the block diagonal parts obtained with this kind of splitting.  
\end{remark}	
\subsubsection{Case $F=I$}
When $F=I$, the closed-loop matrices $A_0-X_0$ and  $A-X$ are the sum of a symmetric negative semidefinite matrix and another one with numerical range in the open left complex plane.

We begin by discussing  the particular case $A$ symmetric negative definite where   Theorem~\ref{thm:zol} allows us to replace the 
constant $(1+\sqrt 2)^2$ with $1$, in the upper bound of Theorem~\ref{thm:sing-decay}. Additionally, we are able to state a bound that only involves the  knowledge of the spectral interval of $A$ and of the norm of $Q$.

\begin{corollary}\label{cor:sing-decay1}
	Under the same assumptions of Theorem~\ref{thm:sing-decay}, apart from $A\in\mathbb R^{n\times n}$ symmetric negative definite of quasiseparable order $r_A$ with spectrum contained in $[-b,-a]$, and $F=I$,  the singular values of any $M\in\mathrm{Off}(X)$ satisfy
	$$
	\frac{\sigma_{ht+1}(M)}{\norm{X}_2}\le  Z_h(E,-E) ,
	\quad h=1,2,\dots,
	$$
	where $t=4r_a+2r_q$, and $E=\left[\,-\sqrt{b^2 + \norm{Q}_2}\, ,\,  -a \right]$.
\end{corollary}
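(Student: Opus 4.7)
The plan is to follow the proof strategy of Theorem~\ref{thm:sing-decay}, but to exploit two simplifications coming from the hypotheses. First, since $F = I$ is diagonal one has $\qsrank(F)=0$ and $\delta F = 0$ in the splitting used to obtain \eqref{eq:lyapunov_correction}, so the right-hand side of the Sylvester equation for $\delta X$ has rank at most $4r_a + 2r_q = t$ (the $\delta F$ contribution disappears). Second, because $A$ and $F=I$ are symmetric and $X$, $X_0$ are symmetric positive semidefinite, the coefficients $A_0 - X_0$ and $A - X$ of that Sylvester equation are symmetric, hence normal. Consequently Theorem~\ref{thm:zol}$(i)$ applies with $K_C = 1$ rather than $(1+\sqrt 2)^2$.

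The key step is to explicitly locate the spectra of the closed-loop matrices $A^{(j)} - X^{(j)}$ and $A_{(j)} - X_{(j)}$ inside $E = [-\sqrt{b^2 + \norm{Q}_2},-a]$. I first carry this out for $A - X$ itself. Since $A^\top = A$ and $F = I$, the Riccati equation reads $AX + XA - X^2 + Q = 0$, that is, $X^2 = AX + XA + Q$. Expanding $(A-X)^2 = A^2 - AX - XA + X^2$ and substituting for $X^2$ yields the crucial identity
\[
(A-X)^2 \;=\; A^2 + Q.
\]
Because $A - X$ is symmetric, this gives $\norm{A - X}_2^2 \le \norm{A^2}_2 + \norm{Q}_2 \le b^2 + \norm{Q}_2$, whence $\lambda_{\min}(A - X) \ge -\sqrt{b^2 + \norm{Q}_2}$. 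Combined with $\lambda_{\max}(A - X) \le \lambda_{\max}(A) \le -a$ (which follows from $X \succeq 0$), the spectrum of $A - X$ is contained in $E$. The same identity holds for the principal subproblems, since $F^{(j)} = F_{(j)} = I$, and Cauchy interlacing ensures that the principal submatrices of $A$ retain spectra in $[-b,-a]$ and that $\norm{Q^{(j)}}_2,\norm{Q_{(j)}}_2 \le \norm{Q}_2$; therefore the spectra of $A^{(j)} - X^{(j)}$ and $A_{(j)} - X_{(j)}$ are all contained in $E$.

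All these closed-loop matrices being symmetric, their numerical ranges coincide with their spectra, so the convex hull of the union of these numerical ranges is a subset of the real interval $E$. Applying Theorem~\ref{thm:sing-decay} with this choice of $E$, and with the constant $K_C = 1$ in place of $(1+\sqrt 2)^2$ thanks to the normality of the Sylvester coefficients, delivers the stated bound. The only genuinely non-routine point is the identity $(A-X)^2 = A^2 + Q$ and its use to control $\norm{A-X}_2$; everything else is bookkeeping of ranks and eigenvalue interlacing.
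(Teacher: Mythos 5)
Your proof is correct, and its overall framework (reduce to the Sylvester equation for $\delta X$, note that with $F=I$ all Sylvester coefficients are symmetric so Theorem~\ref{thm:zol}$(i)$ applies with $K_C=1$, then locate the spectra of all closed-loop matrices in $E$) matches the paper's. The one place where you genuinely diverge is the key spectral-inclusion step. The paper works at the level of the Hamiltonian: it computes $\det(\lambda I - H) = \det(\lambda^2 I - A^2 - Q)$ via a block-determinant identity, deduces that the eigenvalues of the closed-loop matrix are the negative square roots of the eigenvalues of $A^2+Q$, and bounds the latter in $[a^2, b^2+\norm{Q}_2]$. You instead derive the identity $(A-X)^2 = A^2+Q$ directly from the Riccati equation and read off $\norm{A-X}_2 \le \sqrt{b^2+\norm{Q}_2}$ together with $\lambda_{\max}(A-X)\le -a$ from $X\succeq 0$. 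The two arguments encode the same fact (your identity is just the closed-form solution $X=\sqrt{A^2+Q}+A$ of Remark~\ref{rem:rational} in disguise), but your route is more elementary: it avoids the Hamiltonian, the determinant lemma for commuting blocks, and the identification of the closed-loop spectrum with the stable half of the Hamiltonian spectrum. The interlacing argument you use to pass to the principal subproblems is the same as the paper's.
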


\begin{proof}
	As previously observed, within this setting all the considered closed-loop matrices are symmetric and negative definite, so it is sufficient to show that their spectra is contained in $E$. The latter is equivalent to show that the Hamiltonian matrices $H^{(j)}$, $H_{(j)}$ have spectra contained in $E\cup -E$, for $j=1,\dots, n$. Let us begin by showing this property for $H$, i.e., when $j=n$. In this case the characteristic polynomial of $H$ is given by
	$$
	\det(\lambda I -H)= \det\left(\begin{bmatrix}
		\lambda I - A&Q\\
		I &\lambda I +A
	\end{bmatrix}\right) = \det(\lambda^2 I -A^2-Q),
	$$
	where the last equality is due to the fact that the lower left block of the $2\times 2$ block matrix commutes with the lower right block \cite{silvester2000determinants}. In particular the eigenvalues of the stabilizing solution corresponds to the negative square root of those of $A^2 + Q$ that, in view of the positive definiteness of $A^2$ and $Q$, belong to the interval $[a^2 , b^2 + \norm{Q}_2]$. For $H^{(j)}$ with $j=1,\dots, n-1$, we have that $\det (\lambda I-H^{(j)})= \det(\lambda I- [A^{(j)}]^2-Q^{(j)})$ and thanks to the interlacing properties of symmetric definite matrices we get that also the eigenvalues of 
	$[A^{(j)}]^2+Q^{(j)}$ are contained in $[a^2 , b^2 + \norm{Q}_2]$. The same argument applies to the Hamiltonian matrices $H_{(j)}$ and this concludes the proof.
\end{proof}
\begin{remark}\label{rem:rational}
 When $A$ is symmetric, $F=I$, and $Q$ is symmetric positive semidefinite, the solution of \eqref{eq:care} admits the following explicit expression:
 \begin{equation}\label{eq:caresol}
 	X = \sqrt{A^2+Q}+A.	
 \end{equation} 
Thus, one can use the best rational approximation error of the square root function to get upper bounds for the offdiagonal singular values of $X$. More precisely\upd{,} if $g(z)\in\mathcal R_{h,h}$ is a rational approximation of the square root, then $\qsrank(g(A^2+Q)+A)\le h(2r_a+r_q)+r_a$, and  
\begin{equation}\label{eq:rational}
\sigma_{h(2r_a+r_q)+r_a+1}(M)\le \min_{g(z)\in\mathcal R_{h,h}}\norm{X-g(A^2+Q)-A}_2\le \min_{g(z)\in\mathcal R_{h,h}}\max_{z\in[a^2, b^2+\norm{Q}_2]}|\sqrt{z}-r(z)|,
\end{equation}
for any $M\in \mathrm{Off}(X)$. In turn, one can \upd{combine} this argument with upper bounds for the rational approximation error, e.g. \cite[Equation (27)]{gawlik}, to get computable quantities in the right-hand side of \eqref{eq:rational}. Such bounds are potentially sharper than the one in Corollary~\ref{cor:sing-decay1}, see also Example~\ref{ex:decay}; however, it is not possible to extend this approach to the case where $A$ is non symmetric.
\end{remark}

Let us generalize Corollary~\ref{cor:sing-decay1} to the case of a possibly non symmetric coefficient $A$ with numerical range in $\mathbb C^{-}$. In that situation, the closed-loop matrices $A^{(j)}-X^{(j)},A_{(j)}-X_{(j)}$  are not ensured to be normal but we can control their numerical ranges with the inclusion $\mathcal W(A^{(j)}-X^{(j)})\subset \mathcal W(A^{(j)})+\mathcal W(-X^{(j)})$. To do that, we still need  an auxiliary result that concerns the norm of the stabilizing solution of a continuous-time Riccati equation with positive definite quadratic coefficient.
\begin{lemma}\label{lem:sol-norm}
	Let $A\in\mathbb R^{n\times n}$ have all eigenvalues with negative real part, $F\in\mathbb R^{n\times n}$ be symmetric positive definite and $Q\in\mathbb R^{n\times n}$ be symmetric positive semidefinite. Then, the stabilizing solution $X$ of $A^\top X+XA-XFX+Q=0$ satisfies:
	\begin{equation}\label{eq:norm-sol}
		\norm{X}_2\leq \frac{\tau +\sqrt{\tau^2+\norm{F}_2\norm{Q}_2}}{\lambda_{\min}(F)},
	\end{equation}
	where $\tau:=\max \{|x|:\ x\in\mathcal W(A) \cap \mathbb R\}$.
\end{lemma}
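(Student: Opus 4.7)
The plan is to test the Riccati equation against an eigenvector associated with the largest eigenvalue of $X$. Since $X$ is real symmetric and positive semidefinite, one can choose a \emph{real} unit eigenvector $v\in\mathbb R^{n}$ with $Xv=\|X\|_2\,v$. Multiplying the identity $A^\top X+XA-XFX+Q=0$ from the left by $v^\top$ and from the right by $v$, and exploiting the symmetry of $X$ (so that $v^\top X = \|X\|_2 v^\top$), collapses the quadratic matrix equation into a scalar identity
$$
2\|X\|_2\,v^\top A v - \|X\|_2^2\,v^\top F v + v^\top Q v = 0.
$$

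Next I would bound each of the three scalar quantities. Because $v$ is real and both $A$ and $v$ have real entries, the Rayleigh quotient $v^\top A v$ is a real number in $\mathcal W(A)\cap\mathbb R$, hence by definition of $\tau$ we have $-v^\top A v \le \tau$; note this quantity is nonnegative since $\mathcal W(A)\subset\mathbb C^{-}$. The positive definiteness of $F$ gives $v^\top F v \ge \lambda_{\min}(F) > 0$, and the positive semidefiniteness of $Q$ together with $\|v\|_2=1$ yields $v^\top Q v \le \|Q\|_2$. Inserting these estimates and rearranging produces a quadratic inequality in the scalar $\mu := \|X\|_2$:
$$
\lambda_{\min}(F)\,\mu^2 - 2\tau\,\mu - \|Q\|_2 \le 0.
$$

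Finally I would invoke the quadratic formula: the left endpoint of the interval determined by this inequality is negative, while the right endpoint gives the explicit bound
$$
\mu \le \frac{\tau+\sqrt{\tau^2+\lambda_{\min}(F)\,\|Q\|_2}}{\lambda_{\min}(F)} \le \frac{\tau+\sqrt{\tau^2+\|F\|_2\,\|Q\|_2}}{\lambda_{\min}(F)},
$$
which coincides with \eqref{eq:norm-sol}. I do not expect a genuine obstacle here: the only subtle point is being careful that one can take a real eigenvector (so that $v^\top A v$ actually lies in $\mathcal W(A)\cap\mathbb R$ and not merely in $\mathcal W(A)$), and noting that the stabilizing hypothesis plays no explicit role in the argument beyond guaranteeing existence and symmetry of $X$; the symmetry of $X$ is what turns the $A$-terms into $2\|X\|_2 v^\top A v$ rather than a sum of two independent Rayleigh quotients.
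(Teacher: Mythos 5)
Your argument is correct and is essentially the paper's own proof: the paper tests the equation against a real unit eigenvector of an arbitrary eigenvalue $\lambda$ of $X$ and solves the resulting scalar quadratic for $\lambda$, whereas you specialize to the top eigenvector, which is equivalent since $X$ is symmetric positive semidefinite so $\|X\|_2=\lambda_{\max}(X)$. One tiny caveat: the lemma assumes only that the eigenvalues of $A$ have negative real part, not $\mathcal W(A)\subset\mathbb C^{-}$, so $v^\top Av$ need not be negative — but this is harmless, since $\tau$ is defined via absolute values and $v^\top Av\le|v^\top Av|\le\tau$ holds in any case.
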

\begin{proof}
	Since $X$ is real and symmetric positive semidefinite, for any eigenvalue $\lambda\in\mathbb R$ of $X$ we can consider a real eigenvector $v$ of unit norm. Then, we have
	\begin{align*}
		&\quad v^\top(A^\top X+XA-XFX+Q)v=0\\
		\Leftrightarrow&\quad \lambda^2 v^\top Fv-2\lambda v^\top Av - v^\top Qv=0\\
		\Leftrightarrow&\quad \lambda=\frac{v^\top Av\pm \sqrt{(v^\top Av)^2+ (v^\top Fv)(v^\top Qv)}}{v^\top Fv}.
	\end{align*}
	To get \eqref{eq:norm-sol}, it is sufficient to note that $v^\top Av\in\mathcal W(A)\cap \mathbb R$ and derive an upper bound for $|\lambda|$ by means of the inequalities
	$$
	\lambda_{\min}(F)\le v^\top Fv\le \norm{F}_2,\qquad 0\le v^\top Qv\le \norm{Q}_2. 
	$$
\end{proof}
We are now ready to generalize Corollary~\ref{cor:sing-decay1} to the case of a non symmetric quasiseparable coefficient $A$.
\begin{corollary}\label{cor:sing-decay2}
	Under the same assumptions of Theorem~\ref{thm:sing-decay}, if $F=I$, then the singular values of any $M\in\mathrm{Off}(X)$ satisfy
	$$
	\frac{\sigma_{ht+1}(M)}{\norm{X}_2}\le (1+\sqrt{2})^2
	Z_h(E,-E)
	\quad h=1,2,\dots,
	$$
	where $t=4r_a+2r_q$, $E=\mathcal W(A) + \left[-\tau -\sqrt{\tau^2+\norm{Q}_2},\ 0\right]$, and $\tau:= |\min\{\mathcal W(A)\cap \mathbb R\}|$.
\end{corollary}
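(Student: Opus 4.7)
The plan is to reduce the statement to Theorem~\ref{thm:sing-decay} by exhibiting a set $E$ in the open left half-plane that contains the numerical ranges of every closed-loop matrix $A^{(j)}-X^{(j)}$ and $A_{(j)}-X_{(j)}$ for $j=1,\dots,n$. Once such an $E$ is identified, the bound on $\sigma_{ht+1}(M)/\norm{X}_2$ follows directly from Theorem~\ref{thm:sing-decay} with $t=4r_a+2r_q$ (since $r_f=0$ when $F=I$).

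First, I would use the standard subadditivity of the numerical range under matrix sum, namely $\mathcal W(B+C)\subseteq \mathcal W(B)+\mathcal W(C)$, to obtain
$$
\mathcal W(A^{(j)}-X^{(j)})\subseteq \mathcal W(A^{(j)})+\mathcal W(-X^{(j)}),
$$
and analogously for $A_{(j)}-X_{(j)}$. The principal submatrix inclusion $\mathcal W(A^{(j)})\subseteq \mathcal W(A)$ (obtained by extending unit vectors with zeros) and the identical fact for $A_{(j)}$ give the first summand. For the second summand, since $X^{(j)}$ is symmetric positive semidefinite, $\mathcal W(-X^{(j)})=[-\norm{X^{(j)}}_2,0]$, so the task reduces to producing a uniform upper bound on $\norm{X^{(j)}}_2$ and $\norm{X_{(j)}}_2$.

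To obtain this uniform bound I would invoke Lemma~\ref{lem:sol-norm} with $F=I$ applied to each principal Riccati equation. Since $\mathcal W(A^{(j)})\subseteq \mathcal W(A)$ and $\mathcal W(A_{(j)})\subseteq\mathcal W(A)$, the quantity $\tau$ associated to each principal submatrix is bounded by the global $\tau=|\min\{\mathcal W(A)\cap\mathbb R\}|$; likewise, interlacing/submatrix norm bounds give $\norm{Q^{(j)}}_2,\norm{Q_{(j)}}_2\leq \norm{Q}_2$. With $\lambda_{\min}(F)=\norm{F}_2=1$, Lemma~\ref{lem:sol-norm} yields
$$
\norm{X^{(j)}}_2,\ \norm{X_{(j)}}_2\ \leq\ \tau+\sqrt{\tau^2+\norm{Q}_2}, \qquad j=1,\dots,n.
$$

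Combining these ingredients produces
$$
\mathcal W(A^{(j)}-X^{(j)}),\ \mathcal W(A_{(j)}-X_{(j)})\ \subseteq\ \mathcal W(A)+\bigl[-\tau-\sqrt{\tau^2+\norm{Q}_2},\,0\bigr]\ =\ E,
$$
and since $E$ is the Minkowski sum of two convex sets, it is itself convex, hence contains the convex hull required by Theorem~\ref{thm:sing-decay}. It remains to verify $E\subset \mathbb C^-$: this holds because $\mathcal W(A)\subset\mathbb C^-$ is compact so $\sup\{\Re(z):z\in\mathcal W(A)\}<0$, while the added real interval is contained in $(-\infty,0]$, so every point of $E$ has strictly negative real part. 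Applying Theorem~\ref{thm:sing-decay} then concludes the proof. The main conceptual step is the uniform control of $\norm{X^{(j)}}_2$ via Lemma~\ref{lem:sol-norm} together with the principal-submatrix monotonicity of $\tau$ and $\norm{Q}_2$; everything else is a routine manipulation of numerical ranges.
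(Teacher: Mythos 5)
Your proposal is correct and follows essentially the same route as the paper: both reduce the claim to Theorem~\ref{thm:sing-decay} by bounding $\mathcal W(A^{(j)}-X^{(j)})\subseteq\mathcal W(A^{(j)})+\mathcal W(-X^{(j)})$ and then controlling $\norm{X^{(j)}}_2$ uniformly via Lemma~\ref{lem:sol-norm} together with the monotonicity of $\mathcal W(\cdot)$ and $\norm{\cdot}_2$ under passage to principal submatrices. The only difference is that you spell out the convexity of $E$ and the inclusion $E\subset\mathbb C^-$, which the paper leaves implicit.
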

\begin{proof}
	To get the claim we just need to show that the numerical ranges of the closed-loop matrices $A^{(j)}-X^{(j)}$, and $A_{(j)}-X_{(j)}$ are contained in $E$ for any $j=1,\dots, n$, and apply Theorem~\ref{thm:sing-decay}.
	
	Note that in view of the relations $\mathcal W(A^{(j)})\subseteq\mathcal W(A)$, $\mathcal W(A_{(j)})\subseteq\mathcal W(A)$, $\norm{Q_{(j)}}_2\le \norm{Q}_2$, $\norm{Q_{(j)}}_2\le \norm{Q}_2$, the upper bound in \eqref{eq:norm-sol} applies also to the symmetric positive semidefinite matrices $X^{(j)}$ and $X_{(j)}$, i.e., $\norm{X^{(j)}}_2, \norm{X_{(j)}}_2\leq \tau +\sqrt{\tau^2+\norm{Q}_2}$. 
	This yields
	\begin{align*}
	\mathcal W(A^{(j)}-X^{(j)})&\subseteq \mathcal W(A^{(j)}) + \mathcal W(-X^{(j)})\subseteq \mathcal W(A) + \left[-\tau -\sqrt{\tau^2+\norm{Q}_2},\ 0\right]= E,\\
		\mathcal W(A_{(j)}-X_{(j)})&\subseteq \mathcal W(A_{(j)}) + \mathcal W(-X_{(j)})\subseteq \mathcal W(A) + \left[-\tau -\sqrt{\tau^2+\norm{Q}_2},\ 0\right]= E.
	\end{align*}
\end{proof}
\begin{remark}
	In the case $A$ symmetric negative definite, we can compare the bounds coming from  Corollary~\ref{cor:sing-decay1} and Corollary~\ref{cor:sing-decay2}. We see that  Corollary~\ref{cor:sing-decay2} is  looser than Corollary~\ref{cor:sing-decay1} as the set where the rational approximation problem is evaluated is  $[-2b-\sqrt{b^2+\norm{Q}_2}, -a]$ that is larger than  $[-\sqrt{b^2+\norm{Q}_2}, -a]$. 
\end{remark}
\begin{example}\label{ex:decay}
Let us run a couple of numerical tests to validate our theoretical bounds for the singular values of the offdiagonal blocks of the solution of a CARE. We consider two equations of the form \eqref{eq:care} with $F=I$, $Q$ equals to the tridiagonal symmetric positive definite 
$$
Q= \frac{T\cdot T^\top}{\norm{T\cdot T^\top}_2},\qquad T=\begin{bmatrix}
	1\\
	1&1\\
	&\ddots&\ddots\\
	&&1&1	
\end{bmatrix},
$$ and $A$ diagonal. Note that, under these assumptions, we can set $t=2$ in the inequalities given in Corollary~\ref{cor:sing-decay1} and Corollary~\ref{cor:sing-decay2}. In the first test, we take $A$ with diagonal entries logarithmically spaced in $[-1, -0.001]$; in the second test, the diagonal entries of $A$ are equally spaced on the complex circle of radius $1$ and center $-1.1$. For both experiments we consider matrices of size $n=500$, and we compute the solution of \eqref{eq:care} by means of the \texttt{icare} Matlab function; in the second test, the matrix $A^\top$ in \eqref{eq:care} is replaced by $A^H$ in view of the complex entries of $A$. Then, we calculate the \emph{offdiagonal singular values of $X$}, defined as
$$
\sigma_\ell^{\mathsf{off}}(X):=\max_{j=1,\dots,n-1}\sigma_\ell(X(j+1:n, 1:j)),
$$
by explicitly computing the SVDs of all the subdiagonal submatrices of $X$. Concerning the upper bounds, we see that in the first test we can set $E=[-\sqrt{2}, -0.001]$, while in the second experiment we consider $E=\{|z-c|\le r\}\supseteq \{|z-1.1|\le 1\}+[-2.1-\sqrt{2.1^2+1}, 0]$, with $c=\frac{4.3+\sqrt{2.1^2+1}}2$, and $r= \frac{4.1+\sqrt{2.1^2+1}}{2}$.
To estimate the Zolotarev numbers $Z_h(E,-E)$ we employ the upper bounds in \cite[Theorem 2.1]{beckermann17} when $A$ has real entries, and \cite[Theorem 1]{townsend18} in the complex circle case. Finally, in the real case, we numerically compute the upper bound in Remark~\ref{rem:rational}, by means of the \texttt{minimax} function of the \texttt{chebfun} toolbox~\cite{chebfun}, for estimating the best rational approximation error of the square root function on the interval $[10^{-6}, 2]$ .

The offdiagonal singular values, and the corresponding upper bounds are reported in Figure~\ref{fig:sing-decay}. We observe that our theoretical results manage to describe the superlinear decay of the offdiagonal
singular values, and the bound in Remark~\ref{rem:rational} is quite tight; however, there is a significant gap between the  decay rates predicted by Corollary~\ref{cor:sing-decay1}, Corollary~\ref{cor:sing-decay2}, and 
the actual behaviour of the sequence $\{\sigma_\ell^{\mathsf{off}}(X)\}_\ell$.	
\end{example}

	\begin{figure}
	\centering
	\begin{tikzpicture}
		\begin{semilogyaxis}[legend pos = north east, ymax=1e3, xlabel = $\ell$, ylabel=$\sigma_\ell^{\mathsf{off}}$, legend style={					
				font=\scriptsize, at={(1,1)}}, width=.48\textwidth]
			\addplot[mark=x, blue] table[x index = 0, y index = 1] {singdecay1.dat};
			\addplot[mark=o, red] table[x index = 0, y index = 2] {singdecay1.dat};
			\addplot[mark=square, cyan] table[x index = 0, y index = 1] {singdecay11.dat};
			\legend{$\sigma_\ell^{\mathsf{off}}(X)$, Bound from Corollary~\ref{cor:sing-decay1}, Bound from Remark~\ref{rem:rational}}
		\end{semilogyaxis}
	\end{tikzpicture}
	\begin{tikzpicture}
	\begin{semilogyaxis}[legend pos = north east, ymax=1e3, , xlabel = $\ell$, ylabel=$\sigma_\ell^{\mathsf{off}}$, legend style={					
			font=\scriptsize, at={(1,1)}}, width=.48\textwidth]
		\addplot[mark=x, blue] table[x index = 0, y index = 1] {singdecay2.dat};
		\addplot[mark=o, red] table[x index = 0, y index = 2] {singdecay2.dat};
		\legend{$\sigma_\ell^{\mathsf{off}}(X)$, Bound from Corollary~\ref{cor:sing-decay2}}
	\end{semilogyaxis}
\end{tikzpicture}
	\caption{Offdiagonal singular values in the solution $X\in\mathbb R^{500\times 500}$ of \eqref{eq:care}, with $A$ diagonal, $F=I$, and $Q$ tridiagonal symmetric positive definite. On the left, the matrix $A$ has diagonal entries logarithmically spaced in $[-1, -0.001]$; on the right, the diagonal entries of $A$ are equally spaced on the circle of radius $1$ and center $-1.1$.  }\label{fig:sing-decay}
\end{figure}
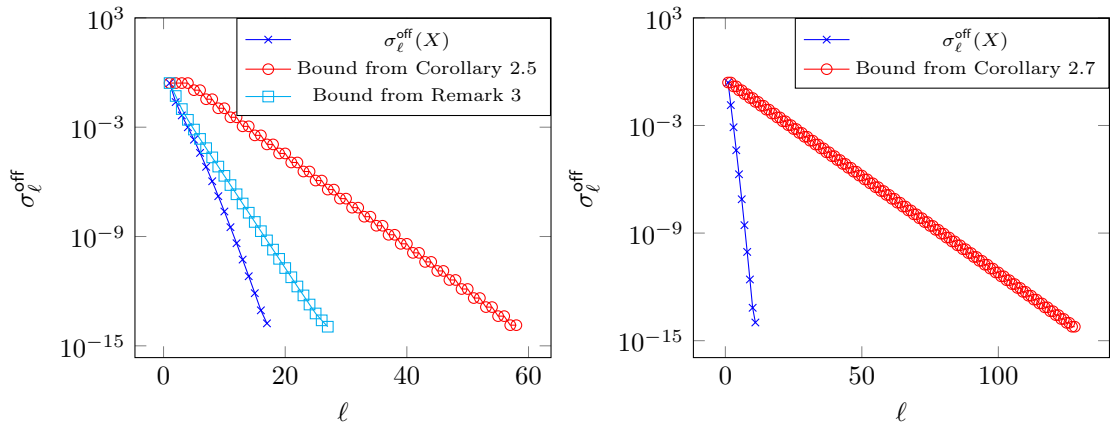
\subsubsection{Case $F\neq I$: bounds based on the numerical ranges of the closed-loop matrices} \label{sec:2.2.2}
Let us address the more general scenario where we consider invertible quadratic coefficients that differ from the identity matrix. A natural idea is to apply some algebraic manipulations to $\eqref{eq:care}$ in order to obtain an equivalent CARE where the quadratic coefficient is the identity matrix and then apply Corollary \ref{cor:sing-decay2}. Pursuing this direction leads to the following result. 
\begin{corollary}\label{cor:sing-decay3}
	Under the same assumptions of Theorem~\ref{thm:sing-decay}, if $F$ is symmetric positive definite, $LL^\top=F$ denotes its Cholesky factorization, and $\mathcal W(L^{-1}AL)\subset \mathbb C^-$,  then the singular values of  any $M\in\mathrm{Off}(X)$   satisfy
	$$
	\frac{\sigma_{ht+r_f+1}(M)}{\norm{X}_2}\le \kappa(F)(1+\sqrt{2})^2 Z_h(E,-E),
	\quad h=1,2,\dots,
	$$
	where $$t=4r_a+6r_f+2r_q,$$
	 $$
	 E=\mathcal W(L^{-1}AL) + \left[-\tau -\sqrt{\tau^2+\norm{F}_2\norm{Q}_2},\ 0\right],
	 $$
	   $$\tau:= |\min\{\mathcal W(L^{-1}AL)\cap \mathbb R\}|.$$   
\end{corollary}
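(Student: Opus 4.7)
The plan is to reduce to the quadratic-coefficient-identity case already handled by Corollary~\ref{cor:sing-decay2} via the congruence $\widetilde X := L^\top X L$. Multiplying \eqref{eq:care} by $L^\top$ on the left and by $L$ on the right and using $L^\top X F X L = L^\top X L L^\top X L = \widetilde X^2$, one checks that $\widetilde X$ is the stabilizing solution of
\[
\widetilde A^\top \widetilde X + \widetilde X \widetilde A - \widetilde X^2 + \widetilde Q = 0, \qquad \widetilde A := L^{-1} A L, \quad \widetilde Q := L^\top Q L,
\]
and the hypothesis $\mathcal W(L^{-1} A L)\subset\mathbb C^-$ together with $\widetilde Q \succeq 0$ ensures that the existence assumptions of Corollary~\ref{cor:sing-decay2} are satisfied for this new equation.

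The first task is to track the quasiseparable orders of $\widetilde A$ and $\widetilde Q$. Since $L$ and $L^{-1}$ are lower triangular, their upper quasiseparable order is $0$ while their lower quasiseparable order equals $r_f$. Using subadditivity of the lower/upper orders separately, one obtains that the lower and upper qsranks of $\widetilde A$ are bounded by $r_a+2r_f$ and $r_a$, respectively, while those of $\widetilde Q$ are both bounded by $r_q + r_f$ (the equality between upper and lower is forced by the symmetry of $\widetilde Q$). Redoing the derivation of Theorem~\ref{thm:sing-decay} for the transformed equation but keeping lower and upper orders distinct, the right-hand side of the Sylvester equation governing $\widetilde X - \widetilde X_0$ has rank at most $2(r_a + (r_a+2r_f)) + ((r_q+r_f)+(r_q+r_f)) = 4r_a + 6r_f + 2r_q =: t$. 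Applying Theorem~\ref{thm:zol}(i) with this improved rank then yields
\[
\sigma_{ht+1}(\widetilde M) \le (1+\sqrt 2)^2\, \|\widetilde X\|_2 \, Z_h(\widetilde E,-\widetilde E), \qquad \widetilde M \in \mathrm{Off}(\widetilde X),
\]
with $\widetilde E = \mathcal W(\widetilde A) + \bigl[-\tau - \sqrt{\tau^2 + \|\widetilde Q\|_2},\,0\bigr] \subseteq E$ in view of $\|\widetilde Q\|_2 \le \|F\|_2 \|Q\|_2$; monotonicity of the Zolotarev number in its arguments upgrades the bound to one in terms of $Z_h(E,-E)$.

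The second task is to translate the decay from $\widetilde X$ back to $X = L^{-\top}\widetilde X L^{-1}$. Partitioning $L$ as a lower block-triangular matrix with diagonal blocks $L_{11}, L_{22}$ and off-diagonal block $L_{21}$ of rank at most $r_f$, a direct computation gives
\[
X(j{+}1{:}n,\,1{:}j) = L_{22}^{-\top}\, \widetilde X(j{+}1{:}n,\,1{:}j)\, L_{11}^{-1} \,-\, L_{22}^{-\top}\widetilde X(j{+}1{:}n,\,j{+}1{:}n) L_{22}^{-1} L_{21} L_{11}^{-1}.
\]
The second summand has rank at most $r_f$, so an Eckart--Young argument produces the shift $\sigma_{k+r_f+1}(X_{21}) \le \|L_{22}^{-\top}\|_2\|L_{11}^{-1}\|_2\,\sigma_{k+1}(\widetilde X_{21})$. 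Cauchy interlacing applied to $F_{11}=L_{11}L_{11}^\top$ gives $\|L_{11}^{-1}\|_2 \le \lambda_{\min}(F)^{-1/2}$; the Schur-complement identity $L_{22}L_{22}^\top = F/F_{11}$ together with $\lambda_{\min}(F/F_{11})\ge \lambda_{\min}(F)$ yields the analogous bound on $\|L_{22}^{-\top}\|_2$. Combining with $\|\widetilde X\|_2 \le \|F\|_2\|X\|_2$ produces the factor $\|F\|_2\|F^{-1}\|_2 = \kappa(F)$ in front of the bound and yields the announced inequality.

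The two delicate points are the bookkeeping of upper versus lower quasiseparable orders of $\widetilde A$---without which one would obtain the looser constant $12 r_f$ in place of $6 r_f$---and the norm control on the blocks of $L$ and $L^{-1}$, which is what forces the use of the Schur complement and Cauchy interlacing rather than just the submultiplicative inequality on $\|L^{-1}\|_2$.
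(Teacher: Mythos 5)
Your proposal is correct and follows essentially the same route as the paper: congruence by the Cholesky factor to reduce to $F=I$, application of Corollary~\ref{cor:sing-decay2} to the transformed equation with the transformed quasiseparable orders, and back-transformation of the offdiagonal block as a sum of a scaled block of $\widetilde X$ plus a rank-$r_f$ term, yielding the index shift $r_f$ and the factor $\kappa(F)$. The only (immaterial) differences are that your lower/upper qsrank bookkeeping for $L^{-1}AL$ is slightly more explicit than the paper's, and that you bound $\Vert L_{11}^{-1}\Vert_2$ and $\Vert L_{22}^{-1}\Vert_2$ via interlacing and Schur complements where the paper simply uses that the diagonal blocks of $L^{-1}$ have norm at most $\Vert L^{-1}\Vert_2$ --- both give the same constant.
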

\begin{proof}
	Multiplying \eqref{eq:care} from the left by $L^\top$ and from the right by $L$ we get
	\begin{equation}\label{eq:riccati-cholesky}
		\widetilde A^\top Y + Y\widetilde A- Y^2+ \widetilde Q=0,
	\end{equation}
	with $\widetilde A:=L^{-1}AL, \widetilde Q =L\upd{^\top}QL$, and $Y:=L^\top XL$. Note that $L$ is lower triangular and quasiseparable of order $r_f$  so that also $L^{-1}$ is lower triangular and quasiseparable with the same order. In view of the \upd{subadditivity} properties of the quasiseparable rank, we have that $\qsrank(\widetilde A)\le r_a+r_f$, and  $\qsrank(\widetilde Q)\le r_f+r_q$.
	By applying Corollary~\ref{cor:sing-decay2} to \eqref{eq:riccati-cholesky} we get 
	\begin{equation}\label{eq:Ybound}
		\frac{\sigma_{ht+1}(M_Y)}{\norm{Y}_2}\le (1+\sqrt{2})^2 Z_h(E,-E),
	\end{equation}
	where $M_Y$ indicates 
	any element of $\mathrm{Off}(Y)$. Observe that if we look at the $2\times 2$ block partitioning
	$$
	Y=\begin{bmatrix}
		Y_{11} & Y_{12}\\
		Y_{21}&Y_{22}
	\end{bmatrix}, \qquad X=\begin{bmatrix}
		X_{11} & X_{12}\\
		X_{21}&X_{22}
	\end{bmatrix}\qquad L^{-1}=\begin{bmatrix}
		\widetilde L_{11} & 0\\
		\widetilde L_{21}&\widetilde L_{22}
	\end{bmatrix}
	$$
	where the diagonal blocks are square, and all the matrices are partitioned in the same way, then we have that the offdiagonal blocks of $X=L^{-\upd{\top}}YL^{-1}$ are given by 
	$$
	X_{21}= \widetilde L_{22}^\top Y_{21}\widetilde L_{11} +\widetilde L_{22}^\top Y_{22}\widetilde L_{21}, \qquad  X_{12}= \widetilde L_{11}^\top Y_{12}\widetilde L_{22} +\widetilde L_{21}^\top Y_{22}\widetilde L_{22}.
	$$
	Let us show that the singular values of $X_{21}$ satisfy the claim. Since the rank of $\widetilde L_{22}^\top Y_{22}\widetilde L_{21}$ is at most $r_f$ and the singular values of $Y_{21}$ satisfy \eqref{eq:Ybound}, then we have
	\begin{align*}
		\sigma_{ht+r_f+1}(X_{21})\le \sigma_{hk+1}(\widetilde L_{22}^\top Y_{21}\widetilde L_{11})&\le \norm{F^{-1}}_2 \sigma_{ht+1}(Y_{21})\\ &\le \norm{F^{-1}}_2\norm{Y}_2 (1+\sqrt{2})^2 Z_h(E,-E)
		\\ & \le \norm{F^{-1}}_2 \norm{F}_2\norm{X}_2 (1+\sqrt{2})^2
		Z_h(E,-E),
	\end{align*}
	where we have used the relations: $\norm{\widetilde L_{jj}}_2\le \norm{L^{-1}}_2$, $\norm{L^{-1}}_2\norm{L^{-\upd{\top}}}_2= \norm{F^{-1}}_2$, and $\norm{L}_2\norm{L^\top}_2= \norm{F}_2$.
	As the matrix $X$ is symmetric, there is no need to repeat the argument for $X_{12}$,  and this concludes the proof. 
\end{proof}
\begin{example}\label{ex:decay2}
Corollary~\ref{cor:sing-decay3} suggests that a high condition number of $F$ may destroy the decaying behaviour of the offdiagonal singular values of the solution of \eqref{eq:care}. In this example we show that, actually, $\kappa(F)$ has a much more limited impact on the quantities $\sigma_\ell^{\mathsf{off}}(X)$. We consider two parameter dependent CAREs where we vary the condition number of the coefficient $F$.  More precisely, we take $F$ diagonal with logarithmically spaced diagonal entries in the interval $\left[\frac{1}{\sqrt{\kappa(F)}}, \sqrt{\kappa(F)}\right]$, and values $\kappa(F)\in\{1,10^2,10^4, 10^6,10^8\}$. 
In the two tests, the matrix $A$ is chosen as:
\begin{itemize}
	\item $A=WDW\upd{^\top}$ where $D$ is diagonal with logarithmically spaced entries in $[-1,-0.001]$,
	\item $A= W-1.1 \cdot I$,
\end{itemize} 
where $W$ is a random unitary upper Hessenberg matrix (so $W$ is $1$-quasiseparable as in a unitary matrix the rank of the lower offdiagonal blocks is equal to the corresponding block above). In particular, $W$ is obtained as the $Q$ factor of the QR factorization of a random upper Hessenberg matrix (in Matlab: \texttt{[Wj, \textasciitilde ] = qr(hess(randn(n)))}) and has eigenvalues on the complex unit circle.
For both tests, the $Q$ coefficient is taken as in Example~\ref{ex:decay}.

The relative offdiagonal singular values, i.e., $\sigma_\ell^{\mathsf{off}}(X)/\sigma_1^{\mathsf{off}}(X)$, for the various choices of $\kappa(F)$ are shown in Figure~\ref{fig:sing-decay2}; the left part of the figure refers to the case where $A$ has real spectrum, and the right part to the one where $A$ has eigenvalues on the circle with radius $1$ centered at $-1.1$. From both plots we see that the numerical quasiseparable structure is present in the solution, even for large values of $\kappa(F)$. Counter-intuitively, in the case where $A$ has a real spectrum, we see a slight speed-up in the decay of $\sigma_\ell^{\mathsf{off}}(X)$, as the condition number overcome the value $10^2$. These outcomes indicate that the bound in Corollary~\ref{cor:sing-decay3} is too pessimistic when $\kappa(F)$ is large, at least in typical scenarios. 
\end{example}
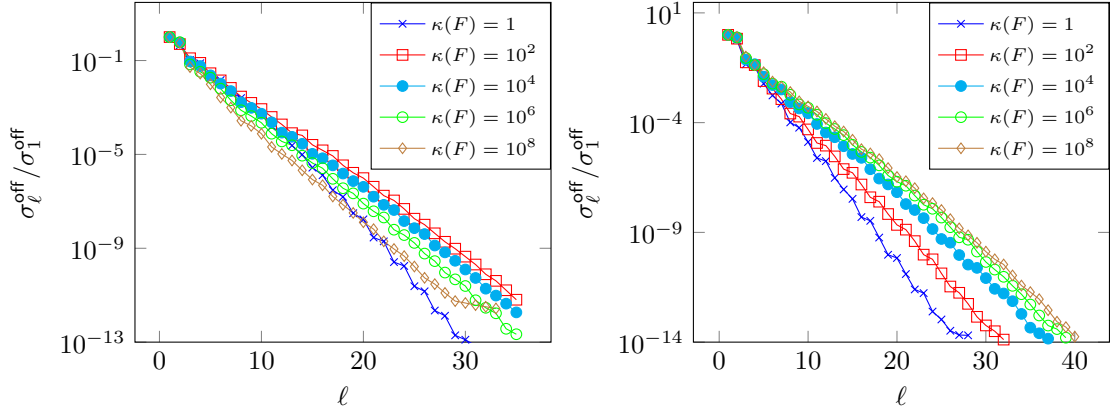
\begin{figure}
	\centering
		\begin{tikzpicture}
		\begin{semilogyaxis}[legend pos = north east, ymax=30, ymin=1e-13 , xlabel = $\ell$, ylabel=$\sigma_\ell^{\mathsf{off}}/\sigma_1^{\mathsf{off}}$, legend style={					
				font=\scriptsize, at={(1,1)}}, width=.48\textwidth]
			\addplot[mark=x, blue] table[x index = 0, y index = 1] {singdecayF.dat};
			\addplot[mark=square, red] table[x index = 0, y index = 2] {singdecayF.dat};
				\addplot[mark=*, cyan] table[x index = 0, y index = 3] {singdecayF.dat};
					\addplot[mark=o, green] table[x index = 0, y index = 4] {singdecayF.dat};
						\addplot[mark=diamond, brown] table[x index = 0, y index = 5] {singdecayF.dat};
			\legend{
			$\kappa(F)=1$\phantom{$0^1$}, $\kappa(F)=10^2$, $\kappa(F)=10^4$, $\kappa(F)=10^6$, $\kappa(F)=10^8$}
		\end{semilogyaxis}
	\end{tikzpicture}~\begin{tikzpicture}
	\begin{semilogyaxis}[legend pos = north east, ymax=30, ymin=1e-14, xlabel = $\ell$, ylabel=$\sigma_\ell^{\mathsf{off}}/\sigma_1^{\mathsf{off}}$, legend style={					
			font=\scriptsize, at={(1,1)}}, width=.48\textwidth]
		\addplot[mark=x, blue] table[x index = 0, y index = 1] {singdecayF2.dat};
		\addplot[mark=square, red] table[x index = 0, y index = 2] {singdecayF2.dat};
		\addplot[mark=*, cyan] table[x index = 0, y index = 3] {singdecayF2.dat};
		\addplot[mark=o, green] table[x index = 0, y index = 4] {singdecayF2.dat};
		\addplot[mark=diamond, brown] table[x index = 0, y index = 5] {singdecayF2.dat};
		\legend{
			$\kappa(F)=1$\phantom{$0^1$}, $\kappa(F)=10^2$, $\kappa(F)=10^4$, $\kappa(F)=10^6$, $\kappa(F)=10^8$}
	\end{semilogyaxis}
\end{tikzpicture}
	\caption{Relative offdiagonal singular values of the solution $X\in\mathbb R^{500\times 500}$ of \eqref{eq:care}, with $A$ diagonal, $F$ diagonal with increasing condition number, and $Q$ tridiagonal symmetric positive definite. On the left, the matrix $A$ has eigenvalues logarithmically spaced in $[-1, -0.001]$; on the right, the eigenvalues of $A$ are on the circle of radius $1$ and center $-1.1$.}\label{fig:sing-decay2}
\end{figure}
\subsubsection{Case $F\neq I$: bounds based on the spectra of the closed-loop matrices}
Theorem 2.1 in \cite{beckermann17} allows us to slightly modify
the bound in Theorem~\ref{thm:sing-decay}  in case additional information on the closed-loop matrices $C^{(j)}:=A^{(j)}-F^{(j)}X^{(j)}$, $C_{(j)}:=A_{(j)}-F_{(j)}X_{(j)}$ is available. For instance, if $C^{(j)},C_{(j)}$ are diagonalizable for every $j$ we can replace the numerical range with the spectra of the closed-loop matrices, by paying the price of having the constant 
$$
\mathfrak K:=\kappa_{\mathrm{eig}}(A-FX)\cdot \max\left\{\max_{j=1,\dots, n}\kappa_{\mathrm{eig}}(C^{(j)})\, \, ,\, \max_{j=1,\dots, n}\kappa_{\mathrm{eig}}(C_{(j)})\right\}. 
$$
in place of $(1+\sqrt 2)^2$. In the following result, we enclose the spectrum of all the closed-loop matrices into a specific subset of $\mathbb C^-$. Additionally, when the antisymmetric part of $L^{\top}AL^{-\top}$ has a small magnitude, we can give a lower bound of the distance of this set from the imaginary axis.
\begin{corollary}\label{cor:sing-decay4}
	 Under the same assumptions of Theorem~\ref{thm:sing-decay}, if $F,Q$ are symmetric positive definite, and the closed-loop matrices $C^{(j)},C_{(j)}$ are diagonalizable for $j=1,\dots,n$,
	  then there \upd{exists} $\tau>0$ such that the singular values of any $M\in\mathrm{Off}(X)$ satisfy
	$$
	\frac{\sigma_{ht+1}(M)}{\norm{X}_2}\le \mathfrak K\cdot
	Z_h(E,-E),
	\quad h=1,2,\dots 
	$$
	where $t=4r_a+2r_f+2r_q$, and $E=\left\{z\in\mathbb C:\ \Re(z)\le -\tau,\  r_\ell\le |z|\le r_u \right\}$
	with 
	\begin{align*}
		r_\ell&:= \sqrt{\frac{\lambda_{\min}(Q)}{3\norm{F^{-1}}_2}},&\quad
		r_u:= \norm{A}_2\sqrt{\kappa(F)}+\sqrt{2\norm{A}_2^2\kappa(F)+\norm{Q}_2\norm{F}_2}.
	\end{align*}
Additionally, if the following inequality holds
\begin{equation}\label{eq:antisym}
\frac{\norm{L^{\top}AL^{-\top} - L^{-1}A^\top L}_2^2}4<\frac{\lambda_{\min}(Q)}{\norm{F^{-1}}_2}, 
\end{equation}
then  $\tau\ge\sqrt{\frac{\lambda_{\min}(Q)}{\norm{F^{-1}}_2}-\frac{\norm{L^{\top}AL^{-\top} - L^{-1}A^\top L}_2^2}4}>0$.
\end{corollary}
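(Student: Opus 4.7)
The plan is to apply Theorem~\ref{thm:zol}(ii) to the Sylvester equation \eqref{eq:lyapunov_correction} governing the offdiagonal blocks of $X$. Since the set $E$ is given explicitly, the entire task reduces to enclosing the spectra of every closed-loop matrix $C^{(j)} = A^{(j)} - F^{(j)}X^{(j)}$ and $C_{(j)} = A_{(j)} - F_{(j)}X_{(j)}$ inside $E\subset\mathbb C^-$, uniformly in $j$. Following the proof of Corollary~\ref{cor:sing-decay3}, I would first reduce to the case of identity quadratic coefficient via the Cholesky factorization $F = LL^\top$: set $\widetilde A = L^{-1}AL$, $Y = L^\top X L$ (real symmetric and, since $Q$ is positive definite, positive definite), and $\widetilde Q = L^\top Q L$. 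Then $Y$ satisfies $\widetilde A^\top Y + Y\widetilde A - Y^2 + \widetilde Q = 0$, and $\widetilde C := \widetilde A - Y$ is similar to $A - FX$, hence shares its spectrum. The same construction applied to each principal subproblem, combined with Cauchy interlacing and the inequalities $\|A^{(j)}\|_2, \|A_{(j)}\|_2 \le \|A\|_2$, $\lambda_{\min}(F^{(j)}),\lambda_{\min}(F_{(j)}) \ge \lambda_{\min}(F)$, $\lambda_{\min}(Q^{(j)}),\lambda_{\min}(Q_{(j)}) \ge \lambda_{\min}(Q)$, and their analogues for the maxima, yields uniform versions of these facts.

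The central algebraic identity is
$$
\widetilde C^\top \widetilde C = (\widetilde A^\top - Y)(\widetilde A - Y) = \widetilde A^\top \widetilde A + \widetilde Q,
$$
obtained by eliminating $\widetilde A^\top Y + Y\widetilde A$ via the transformed Riccati. Since $\|Mv\| \ge \sigma_{\min}(M)\|v\|$ for any eigenvector, for every eigenvalue $\mu$ of $\widetilde C$ (equivalently of $C$) one has simultaneously
$$
|\mu|^2 \ge \sigma_{\min}(\widetilde C)^2 \ge \lambda_{\min}(\widetilde Q),\qquad |\mu|^2 \le \|\widetilde C\|_2^2 \le \lambda_{\max}(\widetilde A^\top \widetilde A + \widetilde Q).
$$
Applying Ostrowski's inequality together with $\|L\|_2^2 = \|F\|_2$, $\|L^{-1}\|_2^2 = \|F^{-1}\|_2$, and combining with the interlacing estimates uniformly across the subproblems, translates these bounds into the advertised $r_\ell$ and $r_u$.

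For the real-part estimate, I would exploit that $Y$ is symmetric, so that $\widetilde C - \widetilde C^\top = \widetilde A - \widetilde A^\top$. Consequently $|\Im(\mu)| \le \tfrac12\|\widetilde A - \widetilde A^\top\|_2$ for every such $\mu$, and combining with the lower bound on $|\mu|^2$ gives
$$
|\Re(\mu)|^2 = |\mu|^2 - |\Im(\mu)|^2 \ge \lambda_{\min}(\widetilde Q) - \tfrac14\|\widetilde A - \widetilde A^\top\|_2^2.
$$
An algebraic identification of $\|\widetilde A - \widetilde A^\top\|_2$ with the antisymmetric norm $\|L^\top A L^{-\top} - L^{-1}A^\top L\|_2$ of hypothesis \eqref{eq:antisym}, via the similarity relating $L^{-1}AL$ to $L^\top A L^{-\top}$, together with $\lambda_{\min}(\widetilde Q) \ge \lambda_{\min}(Q)/\|F^{-1}\|_2$, produces the explicit $\tau$-bound stated in the corollary. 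Without hypothesis \eqref{eq:antisym}, the mere existence of $\tau > 0$ still follows: because $Q$ is positive definite, every stabilizing closed-loop matrix associated with a Riccati subproblem has spectrum strictly in the open left half-plane, and the maximum of $\Re(\mu)$ over the finitely many such spectra is attained and negative.

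Once all closed-loop spectra are enclosed in $E$, Theorem~\ref{thm:zol}(ii) applied to \eqref{eq:lyapunov_correction}, whose right-hand side has rank at most $4r_a + 2r_f + 2r_q$, produces the Zolotarev bound with constant $\mathfrak K$. The main obstacle I anticipate is the bookkeeping required to match the precise constants appearing in $r_\ell$, $r_u$ and $\tau$ uniformly across the principal subproblems, made slightly delicate by the fact that the Cholesky factor of $F_{(j)}$ is not a principal submatrix of $L$; a secondary obstacle is the algebraic rewriting of $\tfrac12\|\widetilde A - \widetilde A^\top\|_2$ in the exact symmetric form that appears in \eqref{eq:antisym}.
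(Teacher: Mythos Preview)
Your approach is correct but genuinely different from the paper's, and in fact simpler. The paper does not use the Riccati identity $\widetilde C^\top\widetilde C=\widetilde A^\top\widetilde A+\widetilde Q$ that you discovered; instead it block-diagonalizes the Hamiltonian via $\mathrm{diag}(L^\top,L^{-1})$, reduces its characteristic polynomial to the quadratic matrix polynomial $\lambda^2 I-(\widetilde A-\widetilde A^\top)\lambda-\widetilde A\widetilde A^\top-\widetilde Q$ (with $\widetilde A=L^\top A L^{-\top}$), and then invokes Melman's generalized Cauchy bound for matrix polynomials to enclose the roots in an annulus. That route produces the extra factor $3$ in $r_\ell$ and the more elaborate expression for $r_u$; your singular-value argument $|\mu|\ge\sigma_{\min}(\widetilde C)$ and $|\mu|\le\|\widetilde C\|_2$ gives the tighter annulus $\lambda_{\min}(\widetilde Q)\le|\mu|^2\le\|\widetilde A\|_2^2+\|\widetilde Q\|_2$, which of course still lies inside the paper's $E$. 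For the real-part estimate the two arguments are close in spirit: the paper extracts $|\Re(\lambda)|$ from the scalar equation obtained by averaging the eigenpair relation with its conjugate and minimizing over $\Im(\lambda)$, whereas you combine the annulus lower bound with the numerical-range inequality $|\Im(\mu)|\le\tfrac12\|\widetilde C-\widetilde C^\top\|_2=\tfrac12\|\widetilde A-\widetilde A^\top\|_2$; both land on the same quantity.

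Two minor remarks. First, your ``similarity relating $L^{-1}AL$ to $L^\top A L^{-\top}$'' is really a transposition: the paper's $\widetilde A$ is the transpose of yours, so $\widetilde A-\widetilde A^\top$ differs only by a sign and the norms agree---the identification with \eqref{eq:antisym} is therefore immediate, not an obstacle. Second, the uniformity of the explicit $\tau$-bound across all principal subproblems (which you rightly flag) is handled by the paper only implicitly; it relies on the fact that the bound depends on $\lambda_{\min}(Q)$, $\|F^{-1}\|_2$ and $\|\widetilde A-\widetilde A^\top\|_2$, and these quantities behave monotonically under taking principal submatrices---though for the skew part this last point is not entirely obvious and the paper does not spell it out either.
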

\begin{proof}
	To prove the claim it is sufficient to show that the spectra of the closed-loop matrices $C^{(j)},C_{(j)}$ are all contained in $E$. Since the latter have all eigenvalues in $\mathbb C^-$, we can take as $\tau$ the maximum real part over the finite set given by the union of the eigenvalues of $C^{(j)},C_{(j)}$, for $j=1,\dots,n$. 
	To show the inclusion in the circular annulus, we prove that the associated Hamiltonian matrices have eigenvalues in $\{z\in\mathbb C: r_\ell\le|z|\le r_u\}$. We start with the case $j=n$, and observe that
	\begin{align*}
		&\begin{bmatrix}
			L^\top\\ &L^{-1}
		\end{bmatrix}
		\begin{bmatrix}
			A &-Q\\
			-F&-A
		\end{bmatrix}
		\begin{bmatrix}
			L^{-\upd{\top}}\\ &L
		\end{bmatrix}=\begin{bmatrix}
			\widetilde A &-\widetilde Q\\
			-I&-\widetilde A^\top
		\end{bmatrix}\\
		\Rightarrow& \det\left(\begin{bmatrix}
			\lambda I-A &Q\\
			F&\lambda I+A
		\end{bmatrix}\right)=\det(\lambda^2I-(\widetilde A-\widetilde A^\top)\lambda -\widetilde A\widetilde A^\top-\widetilde Q),
	\end{align*}
	with $\widetilde A=L^\top AL^{-\upd{\top}}$, $\widetilde Q=L^\top QL$, and $L$ \upd{being the} Cholesky factor of $F$. In view of the generalized Cauchy theorem for matrix polynomials \cite[Theorem 3.4]{melman2013generalization}, we have that the absolute values of the eigenvalues of the Hamiltonian are all larger than $d$, where $d$ is defined as
	\begin{align*}
		d=\frac{-\norm{\widetilde A-\widetilde A^\top}_2+\sqrt{\norm{\widetilde A-\widetilde A^\top}_2^2+4\norm{(\widetilde A\widetilde A^\top+\widetilde Q)^{-1}}_2^{-1}}}2.
	\end{align*}
	We remark that for any $\theta >0$
	$$
	\min_{x\in [0,\infty)}\sqrt{x^2+\theta}-x=\sqrt{\frac{\theta}{3}},
	$$
	so that
	$$
	d\ge \sqrt{\frac{\norm{(\widetilde A\widetilde A^\top+\widetilde Q)^{-1}}_2^{-1}}{3}}\ge\sqrt{\frac{\lambda_{\min}(\widetilde A\widetilde A^\top)+\lambda_{\min}(\widetilde Q)}{3}}\ge \sqrt{\frac{\lambda_{\min}(Q)}{3\norm{F^{-1}}_2}},
	$$
	and this lower bound applies to all the closed-loop matrices thanks to the interlacing properties of the eigenvalues of symmetric definite matrices.
	
	Concerning the upper bound, we can again apply \cite[Theorem 3.4]{melman2013generalization} that ensures that the absolute values of all the eigenvalues are less than $D$ satisfying
	\begin{align*}
		D&=\frac{\norm{\widetilde A-\widetilde A^\top}_2+\sqrt{\norm{\widetilde A-\widetilde A^\top}_2^2+4\norm{\widetilde A\widetilde A^\top+\widetilde Q}_2}}2\\
		&\le \frac{\norm{\widetilde A-\widetilde A^\top}_2+\sqrt{\norm{\widetilde A-\widetilde A^\top}_2^2+4(\norm{A}_2^2\kappa(F)+\norm{Q}_2\norm{F}_2})}2\\
		&\le \frac{2\norm{\widetilde A}_2+\sqrt{4\norm{\widetilde A}_2^2+4(\norm{A}_2^2\kappa(F)+\norm{Q}_2\norm{F}_2})}2\\
		&\le \norm{A}_2\sqrt{\kappa(F)}+\sqrt{2\norm{A}_2^2\kappa(F)+\norm{Q}_2\norm{F}_2}).
	\end{align*}
	This inequality also applies to any closed-loop matrix associated with principal submatrices of $A,F,$ and $Q$.
	
	To conclude we need to prove the lower bound for the absolute value of the real part of an eigenvalue when \eqref{eq:antisym} is satisfied. Observe that if $(\lambda,v)$ is an eigenpair for the quadratic matrix polynomial $\lambda^2I-\lambda(\widetilde A-\widetilde A^\top)-\widetilde A\widetilde A^\top-\widetilde Q$ then also $(\overline{\lambda},\overline{v})$. If $v$ is of unit norm then we have that
	\begin{align*}
	0&=\frac{1}{2}\left( \lambda^2 -\lambda v^H(\widetilde A-\widetilde A^\top)v-v^H(A\widetilde A^\top+\widetilde Q)v+\overline{\lambda}^2 -\overline{\lambda}\overline{v}^H(\widetilde A-\widetilde A^\top)\overline{v}-\overline{v}^H(A\widetilde A^\top+\widetilde Q)\overline{v}\right)\\	
	&=\Re(\lambda)^2-\Im(\lambda)^2-\Im( v^H(\widetilde A-\widetilde A^\top)v)\cdot  \Im(\lambda) - v^H(A\widetilde A^\top+\widetilde Q)v\\
	&\Rightarrow\ |\Re(\lambda)| = \sqrt{\Im(\lambda)^2+ \Im( v^H(\widetilde A-\widetilde A^\top)v)\cdot  \Im(\lambda) + v^H(A\widetilde A^\top+\widetilde Q)v}\\
		&\Rightarrow\ |\Re(\lambda)| \ge \sqrt{\Im(\lambda)^2-\norm{\tilde A-\tilde{A}^\top}_2\Im(\lambda) +\lambda_{\min}(Q)\norm{F^{-1}}_2^{-1}}
	.
	\end{align*}
By minimizing over $\Im(\lambda)$ \upd{in} the right-hand side  in the above inequality, we get the claim. 
\end{proof}
\begin{remark}
The quantity $\lambda_{\min}(\widetilde A\widetilde A^\top)$ in the proof of Corollary~\ref{cor:sing-decay4} can be lower bounded by $\sigma_n(A)^2\kappa(F)^{-1}$ in place of $0$. However, it appears difficult to control this quantity for all the principal submatrices of $A$, by only assuming  $\mathcal W(A)\subset \mathbb C^-$. Under the additional assumption $A$ symmetric negative definite, there is no need to assume $Q$ positive definite as we  get the tighter lower bound $\sqrt{\frac{\lambda_{\min}(A)^2\kappa(F)^{-1}+\lambda_{\min}(Q)\kappa(F)^{-1}}{3}}$ for the magnitude of the eigenvalues. An analogous improvement is obtained for the lower bound of the absolute value of the real part.
\end{remark}

\subsection{Tighter bounds for the TT ranks of the value function}\label{sec:tt}
When the stabilizing solution $X$ of \eqref{eq:care} has offdiagonal blocks of numerical low-rank, the value function associated to the corresponding linear-quadratic regulator is well approximated in the TT format.
More precisely, let the value function be the multivariate bilinear map
$$
V(\mathbf y)=V(y_1, \dots, y_n) = \sum_{i,j=1}^nX_{ij} \cdot y_i\cdot y_j = \mathbf y^\top X\mathbf y,
$$
\upd{then} we look at compressed representations of $V(\mathbf y)$ in the \emph{functional Tensor Train format}~\cite{oseledets13}, i.e., approximations of the form:
\begin{equation}\label{eq:tt-value}
	V(\mathbf y)\approx \sum_{\alpha_1,\dots, \alpha_{n-1}=1}^{r_1,\dots,r_{d-1}} U_1(y_1,\alpha_1)\cdot U_2(\alpha_1, y_2,\alpha_2)\cdot \ldots\cdot U_{d-1}(\alpha_{d-2}, y_{d-1},\alpha_{d-1})\cdot U_d(\alpha_{d-1}, y_d),
\end{equation}
where the matrix-valued functions $U_j:\mathbb R\rightarrow \mathbb R^{r_{j-1}\times r_j}$ (with $r_0=r_n=1$) are called \emph{TT cores}, and the integer quantities $r_1,\dots, r_{d-1}$, are the so-called \emph{TT ranks}. Demonstrating that $V(\mathbf y)$ admits an accurate approximation of the form in \eqref{eq:tt-value} ensures that most of its discretized counterparts admit storage efficient representations in the tensor train format~\cite{oseledets11}. For instance, \eqref{eq:tt-value} implies that the $m^n$ tensor containing the evaluation of $V(\mathbf y)$ on a tensorized grid with $m$ points along each coordinate direction, is representable with a storage cost of $O(m\cdot n\cdot \max_{j=1,\dots, n-1}\{r_j^2\})$. Therefore, it is of interest to prove sharp bounds for the TT ranks of $V(\mathbf y)$. 

In \cite[Theorem 4.2]{dolgov13} and \cite[Lemma 3.5]{kazeev13}, it has been shown a link between the rank of the offdiagonal blocks of $X$ and the TT ranks of $V(\mathbf y)$; in the case of a symmetric matrix $X$ we have that $V(\mathbf y)$ admits an exact functional tensor train representation such that
\begin{equation*}\label{eq:ttrank}
r_j\leq \rank\left( X(1:j,\ j+1:n)\right) +2,\qquad j=1,\dots,n-1.	
\end{equation*}


In particular, all the TT ranks are bounded by $\qsrank(X)+2$. The compression of $V(\mathbf y)$ in the functional TT format can be extended to the case where $X$ has only numerically low-rank offdiagonal blocks. Indeed, if $\widetilde X$ is a symmetric  quasiseparable matrix of order $r$, and $\norm{X-\widetilde X}_2\le \epsilon$, then the approximate value function $\widetilde V(\mathbf y)=\mathbf y^\top\widetilde X\mathbf y$ admits an exact TT representation with all TT ranks bounded by $r+2$, and it is such that $\max_{\norm{\mathbf y}_2=M}|V(\mathbf y)-\widetilde V(\mathbf y))|\le \epsilon\cdot  M^2$. This argument\footnote{In \cite{dolgov21} the bound is stated for state vectors $\mathbf y$ of bounded infinity norm, i.e., $\max_{\mathbf y\in[-M,M]^n}|V(\mathbf y)-\widetilde V(\mathbf y))|\le \epsilon\cdot  M^2$; however, to get a correct bound  the right-hand side has to be multiplied by an additional factor $n$.} is at the basis of \cite[Theorem 3.1]{dolgov21} where, under some conditions that ensure the decay of the offdiagonal singular values of $X$, it is shown that for any $\epsilon>0$ it exists an approximate value function $\widetilde V(\mathbf y)$ whose TT ranks are bounded by $\mathcal O((\log(\epsilon^{-1}))^{\frac 72})$. We highlight that this result is valid only under the assumption that the quadratic coefficient $F$ is low-rank.

Following similar steps, we  state theoretical bounds on the TT ranks that apply to both the case $F$ low-rank and $F$ of low quasiseparable rank; these bounds incorporate the novel estimates for the offdiagonal singular values of $X$, that we have developed in the previous section. Afterwards we will comment on the advantages of our result with respect to \cite[Theorem 3.1]{dolgov21}. 
\begin{theorem}\label{thm:ttranks}
Let $X\in\mathbb R^{n\times n}$ be  the symmetric negative solution of the continuous-time Riccati equation 
$
A^\top X+XA-XFX+Q=0,
$	where $A\in\mathbb R^{n\times n}$ is  quasiseparable of order $r_a$ with eigenvalues of negative real parts, $F,Q\in\mathbb R^{n\times n}$ are symmetric positive semidefinite,  and $Q$ is quasiseparable of order $r_q$. For any $\epsilon>0$, and $M>0$, if one of the following two conditions is satisfied
\begin{itemize}
	\item[$(i)$] $\rank(F) = r_f$, $\mathcal W(A)\subseteq E\subset \mathbb C^{-}$,	and $h\in\mathbb N$ is the minimum integer such that
	$$
	Z_h(E,-E)\le \frac{\epsilon}{2(1+\sqrt 2)^2M^2\norm{X}_2\sqrt{n}},
	$$
	\item[$(ii)$] $\rank(F)=n$,  $\qsrank(F)=r_f$,  $\mathcal W(L^{-1}AL)\subset \mathbb C^{-}$, $$W(L^{-1}AL)+ \left[-\tau -\sqrt{\tau^2+\norm{F}_2\norm{Q}_2},\ 0\right]\subseteq E$$ with $\tau:= |\min\{\mathcal W(L^{-1}AL)\cap \mathbb R\}|$,	and $h\in\mathbb N$ is the minimum integer such that
	$$
	Z_h(E,-E)\le \frac{\epsilon}{2(1+\sqrt 2)^2M^2\kappa(F)\norm{X}_2\sqrt{n}}\upd{.}
	$$
\end{itemize}
 \upd{Then} there exists an approximate value function $\widetilde V(\mathbf y)$ such that  $\max_{\norm{\mathbf y}_2\le M}|V(\mathbf y)-\widetilde V(\mathbf y))|\le \epsilon$, and $\widetilde V(\mathbf y)$ admits an exact functional TT representation with TT ranks $r_j$ bounded by
\begin{equation*}\label{eq:ttrank-bound}
r_j\le \min\left(c(h),\ \min(j,\ n-j)\right) +2,\qquad j=1,\dots, n-1,	
 	\end{equation*}
where $$
c(h)=
\begin{cases}
2h\cdot  r_a+r_f+r_q	&\text{case $(i)$}\\
h(4r_a+6r_f +2r_q) +r_f	&\text{case $(ii)$}
\end{cases}.
$$
\end{theorem}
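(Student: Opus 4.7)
The plan is to produce, for each prescribed accuracy $\epsilon$, a symmetric quasiseparable approximant $\widetilde X$ of order $c(h)$ with $\norm{X-\widetilde X}_2\le \epsilon/M^2$, set $\widetilde V(\mathbf y):=\mathbf y^\top \widetilde X\mathbf y$, and then invoke the standard bound linking the TT-rank of a bilinear form to the off-diagonal ranks of its Gram matrix (\cite[Theorem 4.2]{dolgov13}/\cite[Lemma 3.5]{kazeev13}). Since $|V(\mathbf y)-\widetilde V(\mathbf y)|=|\mathbf y^\top(X-\widetilde X)\mathbf y|\le \norm{X-\widetilde X}_2\norm{\mathbf y}_2^2$, such a $\widetilde X$ immediately yields the required pointwise error bound on $\{\norm{\mathbf y}_2\le M\}$.

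First I would derive the off-diagonal singular value estimate for $X$ in the two regimes. In case $(i)$, because $\rank(F)=r_f$, the product $XFX$ has rank at most $r_f$ and so the Riccati equation rewrites as the Sylvester equation
\begin{equation*}
A^\top X+XA=XFX-Q,
\end{equation*}
whose right-hand side is quasiseparable of order at most $r_f+r_q$. Applying Theorem~\ref{thm:quasi-zol}$(i)$ with both $\mathcal W(A)$ and $\mathcal W(-A)$ enclosed in $E$ and $-E$ respectively (using $\mathcal W(A)\subseteq E\subset\mathbb C^-$) yields, for every $M\in\mathrm{Off}(X)$,
\begin{equation*}
\sigma_{2hr_a+r_f+r_q+1}(M)\le (1+\sqrt 2)^2\norm{X}_2 Z_h(E,-E).
\end{equation*}
In case $(ii)$, Corollary~\ref{cor:sing-decay3} applies directly and provides
\begin{equation*}
\sigma_{h(4r_a+6r_f+2r_q)+r_f+1}(M)\le \kappa(F)(1+\sqrt 2)^2\norm{X}_2 Z_h(E,-E).
\end{equation*}
The hypothesis on $h$ makes the right-hand side bounded by $\epsilon/(2M^2\sqrt n)$ in both cases.

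Next I would assemble $\widetilde X$. Using Eckart–Young on each maximal off-diagonal block and gluing the resulting factorisations through a hierarchical (HODLR-type) scheme produces a quasiseparable matrix of order $c(h)$ whose operator norm distance to $X$ is controlled by the $\sqrt n$ factor coming from the propagation of the per-block Frobenius errors across the $\mathcal O(n)$ blocks of the hierarchy, giving $\norm{X-\widetilde X}_2\le \epsilon/M^2$. Symmetry of $\widetilde X$ is obtained by replacing it with $(\widetilde X+\widetilde X^\top)/2$, which preserves the quasiseparable order and does not enlarge the operator norm error.

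With $\widetilde X$ in hand, $\widetilde V(\mathbf y)=\mathbf y^\top\widetilde X\mathbf y$ satisfies the claimed uniform error bound on $\{\norm{\mathbf y}_2\le M\}$. The TT-rank bound of \cite[Theorem 4.2]{dolgov13} gives $r_j\le \rank(\widetilde X(1:j,\,j+1:n))+2$, and by definition of $\qsrank(\widetilde X)\le c(h)$ each such block has rank at most $c(h)$; combining this with the trivial bound $\rank(\widetilde X(1:j,\,j+1:n))\le \min(j,n-j)$ yields the final estimate. The main obstacle is the bookkeeping in the hierarchical construction that is needed to pin down the $\sqrt n$ factor; once that is granted, the remaining steps are direct applications of Theorem~\ref{thm:quasi-zol}, Corollary~\ref{cor:sing-decay3}, and the known bilinear-form/TT dictionary.
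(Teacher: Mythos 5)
Your proposal is correct and follows essentially the same route as the paper: in case $(i)$ rewrite the CARE as the Sylvester equation $A^\top X+XA=XFX-Q$ with right-hand side of quasiseparable order at most $r_f+r_q$ and apply Theorem~\ref{thm:quasi-zol}, in case $(ii)$ invoke Corollary~\ref{cor:sing-decay3}, aggregate the blockwise truncations into a symmetric quasiseparable $\widetilde X$ of order $c(h)$ with a $2\sqrt{n}$ factor, and conclude via the bilinear-form/TT dictionary of \cite{dolgov13}. The only difference is that the aggregation step you flag as the ``main obstacle'' is not re-derived in the paper either; it is exactly \cite[Theorem 2.16]{massei18}, which directly supplies a symmetric $\widetilde X$ with $\norm{X-\widetilde X}_2\le 2\sqrt{n}\,\max_{M\in\mathrm{Off}(X)}\sigma_{c(h)+1}(M)$.
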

\begin{proof}
First, note that $\min(j,n-j)$ is a bound for the rank of the offdiagonal block $X(j+1:n, 1:n)$, so that $\min(j,n-j)+2$ is a bound for the $j$th TT rank of the exact representation of $V(\mathbf y)$, in view of \cite[Theorem 4.2]{dolgov13}. 

Further, in case $(ii)$, Corollary~\eqref{cor:sing-decay3} ensures that
$$
\sigma_{h(4r_a+6r_f+2r_q)+r_f+1}(M)\le \frac{\epsilon}{2M^2\sqrt n}
$$
for every $M\in\mathrm{Off}(X)$. 

In case $(i)$, we show the analogous property for $\sigma_{2hr_a+r_f+r_q+1}(M)$; this follows from  rewriting $X$ as the solution of the linear matrix equation
\begin{equation*}\label{eq:proof}
A^\top X+XA= XFX-Q,
\end{equation*}
where the right-hand side has quasiseparable rank bounded by $r_f+r_q$, and applying Theorem~\ref{thm:quasi-zol}. 

For both cases, by means of \cite[Theorem 2.16]{massei18} we get that there exists a symmetric matrix $\widetilde X$ of quasiseparable order $c(h)$ such that
$$
\norm{X-\widetilde X}_2\le 2\sqrt{n} \max_{M\in\mathrm{Off}(X)}\sigma_{c(h)+1}(M)\le\frac{\epsilon}{M^2}.
$$
To get the claim, we consider the inequality $|V(\mathbf y)-\widetilde V(\mathbf y)|=|\mathbf y^\top(X-\widetilde X)\mathbf y|\le \norm{X-\widetilde X}_2\norm{\mathbf y}_{2}^2$.
\end{proof}
Since the condition $E\subset \mathbb C^{-}$ ensures the exponential decay of the Zolotarev numbers $Z_h(E,-E)$ with respect to the index $h$, Theorem~\ref{thm:ttranks} bounds the growth of the TT ranks, as the approximation error $\epsilon$ of the value function decreases, with an $\mathcal O(\log(\epsilon^{-1}))$ term. This is asymptotically much tighter than the bound
\begin{equation*}\label{eq:sergey-bound}
r_j\le (r_a + 2r_f+r_q)\cdot \log(\epsilon^{-1} + D)^{\frac 72}+2, \qquad \text{with $D$ independent on $\epsilon$},
\end{equation*}
stated in  \cite[Theorem 3.1]{dolgov21} (that applies only to the case $(i)$). This is due to the fact that the latter is obtained by means of an exponential sum approximation of the inverse Lyapunov operator, that has only a square root exponential convergence rate. 

  Another advantage of the bounds in Theorem~\ref{thm:ttranks} is that the only quantity that remains implicit is $\norm{X}_2$; even if estimating $\norm{X}_2$  might be difficult in general,  when $F$ is full rank (case $(ii)$) the upper bound  given in Lemma~\ref{lem:sol-norm} can be employed to get an explicit a priori estimate of the TT ranks, once that the quantities $\epsilon$ and $M$ have been selected.  On the opposite,  the constant $D$ in \eqref{eq:sergey-bound} depends on several implicit quantities, some of them  related to the closed-loop matrix.
\subsection{Efficient solution of the Riccati equation}\label{sec:quasi-care-solve}
The quasiseparable structure in the coefficients and in the stabilizing solution enables us to design a divide-and-conquer algorithm for the solution of \eqref{eq:care}, that is applicable to the large-scale scenario. The latter procedure is based on the efficient representation of quasiseparable matrices by means of hierarchical matrix formats, and the use of a low-rank solver for the CAREs with low-rank right-hand sides. The next sections are dedicated to explaining these ingredients and the ultimate algorithm.

\subsubsection{Representing quasiseparable matrices in the HSS format}
Matrices having a low quasiseparable order are representable with a linear or almost linear amount of parameters ~\cite{vandebril08,eidelman14}, and to efficiently store and operate with such matrices it is often a good idea to rely on hierarchical matrix formats. In this work, we rely on a specific subclass of the set of hierarchical representations 
called hierarchically semiseparable matrices (HSS)~\cite{xia10}. As discussing hierarchical representations is below the scope of this work, we will keep the amount of details to a minimum here, and refer the interested reader to \cite{hackbusch15,xia10,xia2012complexity}.

To represent an $n\times n$ matrix $M$ of quasiseparable order $r$ in the HSS format, we consider the $2\times 2$ block partitioning
\begin{equation}\label{eq:splitting}
M=\begin{bmatrix}
	M(I_1^{(1)}, I_1^{(1)}) & M(I_1^{(1)}, I_2^{(1)})\\
	M(I_2^{(1)}, I_1^{(1)}) & M(I_2^{(1)}, I_2^{(1)})	
\end{bmatrix},
\end{equation}
 where the diagonal blocks are taken of the closest sizes, say $\lceil\frac n2\rceil\times \lceil\frac n2\rceil$ and $\lfloor\frac n2\rfloor\times \lfloor\frac n2\rfloor$, respectively. The offdiagonal blocks have rank at most $r$ and are compressed in a low-rank matrix format; the diagonal blocks are square and can 
 be recursively partitioned in the same way. The recursion is stopped when the blocks 
 at the lowest level are smaller than 
 a prescribed minimal size $n_{\min} \times n_{\min}$; the latter blocks are then stored as dense matrices. 
Thus, for a generic offdiagonal block at the level $\ell$ of the recursion we have a factorization
\[
M(I^{(\ell)}_i,I^{(\ell)}_j) = U_i^{(\ell)} S_{i,j}^{(\ell)} (V_j^{(\ell)})^\top, \quad S_{i,j}^{(\ell)} \in\mathbb C^{r\times r}, \quad U_i^{(\ell)}\in \mathbb C^{n_i^{(\ell)} \times r},\quad V_j^{(\ell)} \in \mathbb C^{n_j^{(\ell)} \times r}.
\]
To achieve a linear complexity storage, in the HSS format the bases matrices of these low-rank representations are nested across the different levels. In general, the bases matrices of an HSS representation might have a non uniform number of columns; we call \emph{HSS rank} the maximum number of columns of a basis matrix across all the level of the HSS representation. In the case of a quasiseparable matrix of order $r$, the HSS rank of its HSS representation is at most $2r$. 

Using this structured representation, one only needs $O(nr)$ memory to represent $M$. We remark that retrieving the HSS representation of a matrix in an efficient way is a non-trivial task \emph{per se}, and there are recent works that describe how to do that in the case where $M$ is accessible through its action on vectors~\cite{halikias2024structured,levitt2024linear}. In this work we always assume that the HSS representation of the matrices of interest is already available. 

Matrix-vector multiplications and solution 
of linear systems with a matrix in the HSS format can be performed with $\mathcal O(nr)$ and 
$\mathcal O(nr^2)$ flops, respectively. Our numerical results leverage the implementation of this format and the related matrix operations 
available in \texttt{hm-toolbox} \cite{massei20}. 
\subsubsection{Low-rank approximation of $\delta X$}
The analysis carried out in section~\ref{sec:decay-care} suggests the use of a solver that computes a low-rank approximation of the matrix $\delta X$ that solves equation~\eqref{eq:riccati_correction}. For large-scale CAREs with low-rank right-hand sides, as \eqref{eq:riccati_correction},
various numerical methods have been proposed~\cite{benner20,simoncini14,simoncini16b,radi18}; importantly, these methods can also leverage the HSS representation of the coefficients to lower their computational costs. 

Similarly to \cite[Section 3.2]{kressner20}, we employ a strategy based on projecting the matrix equation onto a tensorized Krylov subspace. To be more specific, let $U\in\mathbb R^{n\times t}$ contain an orthonormal basis of $t$-dimensional subspace $\mathcal U\subset\mathbb R^n$. Then, we consider approximate solutions of the form $\widetilde {\delta X}:=UYU^\top$, where $Y\in\mathbb R^{t\times t}$ is obtained
from solving a compressed matrix equation. The latter corresponds to imposing a Galerkin condition on the residual with respect to the tensorized space  $\mathcal U\otimes\mathcal U$. 

The choice of the projection subspace $\mathcal U$ is crucial for the accuracy of the approximation.  Here we consider the so-called \emph{extended Krylov subspace}~\cite{simoncini07} that is a popular and effective projection subspace, in the context of matrix equations. 
\begin{definition} Let $A\in\mathbb R^{n\times n}$, $B \in\mathbb R^{n\times k}$, $k<n$, and $s\in\mathbb N$. The vector space 
	\begin{align*}
		\mathcal {EK}_\upd{s}(A,B):= \mathrm{range}\Big\{\Big[ B,A^{-1}B,AB, A^{-2}B,\ldots,A^{s-1}B, A^{-s}B\Big] \Big\}
	\end{align*}
	is called \emph{extended Krylov subspace} of order $s$ with respect to $(A,B)$.
\end{definition}

To give a concise description of what the extended Krylov subspace method (EKSM)~\cite{simoncini07} does for approximating  the solution of~\eqref{eq:riccati_correction}, let us introduce 
$A_{\mathsf{cl}}:= A- FX_0$. Then,  suppose that the offdiagonal parts of the CARE's coefficients are given in factorized form:
\[
\delta A=U_AV_A^*, \quad \delta
Q=U_QD_QU_Q^*, \quad \delta F=U_FD_FU_F^*,
\]
with $U_A,V_A\in\R^{n\times \rank{(\delta A)}}$, $U_Q\in\R^{n\times \rank{(\delta Q)}}$, $U_F\in\R^{n\times \rank{(\delta F)}}$, and symmetric matrices $D_Q \in\R^{\rank{(\delta Q)}\times \rank{(\delta Q)}}$, $D_F \in\R^{\rank{(\delta F)}\times \rank{(\delta F)}}$. Thus, the right-hand side $\widehat Q:= -\delta Q-\delta A^\top X_0-X_0\delta A +X_0\delta F X_0$ of~\eqref{eq:riccati_correction} can be written as $\widehat Q= UDU^\top$, with 
\begin{equation*}\label{eq:rhs-fact}
	U:=[U_Q,V_A,X_0U_A,X_0U_F], \qquad D=\begin{bmatrix} -D_Q\\ &
	\begin{bmatrix} 0&-I_{\rank{(\delta A)}}\\ -I_{\rank{(\delta A)}}&0\end{bmatrix}\\ &&D_F\end{bmatrix}.
\end{equation*}
The correction equation~\eqref{eq:riccati_correction} now reads as
$$
	A_{\mathsf{cl}}^\top\delta X +\delta XA_{\mathsf{cl}}- \delta XF\delta X = \widehat Q.
$$
The EKSM constructs, incrementally with respect to the parameter $s$, approximate solutions of the form $\delta X_s=U_sY_sU_s^\top$, where 
$U_s$ contains an orthonormal basis of  $\mathcal {EK}_s(A^\top_{\mathsf{cl}},U)$. The small matrix 
$Y_s$ is determined as the solution of the compressed CARE
\begin{align*}
	\tilde A^\top_{\mathsf{cl}}Y_s+Y_s\tilde A_{\mathsf{cl}}-Y_s\tilde FY_s +  \tilde UD\tilde U^\top= 0,\quad \tilde A_{\mathsf{cl}}:=U_s^\top A_{\mathsf{cl}}U_s,~\tilde F:=U_s^\top FU_s,~\tilde U:=U_s^\top U.
\end{align*}
Note that, the condition $\mathcal W(A_{\mathsf{cl}})\subset \mathbb C^-$ implies the existence and uniqueness of the symmetric stabilizing solution of the compressed CARE; although the latter solution might be indefinite, this poses no major issue. The computation of $Y_s$ is addressed by algorithms for small, dense CAREs \cite{benner2015numerical}. 

The pseudocode of EKSM is reported in Algorithm~\ref{alg:EKSM}. We remark that the quasiseparable matrices $A_{\mathsf{cl}},F\in\mathbb R^{n\times n}$ are expected to be represented within the HSS format introduced in the previous section. The HSS arithmetic is  exploited in  the execution of the extended block Arnoldi algorithm at line~\ref{step:arnoldi}, that requires to compute matrix vector multiplications and solve linear systems with the matrix $A_{\mathsf{cl}}$; these operations have a linear complexity with respect to $n$.

We refer to the literature~\cite{simoncini14,simoncini16b} and to \cite[Section 3.2]{kressner20} for further  implementation details about this well established method.

\begin{algorithm}
	\caption{Extended Krylov subspace method for solving $A_{\mathsf{cl}}^\top\delta X +\delta XA_{\mathsf{cl}}- \delta XF\delta X = UDU^\top$}\label{alg:EKSM}
	\begin{algorithmic}[1]
		\Procedure{low\_rank\_care}{$A_{\mathsf{cl}},F, U,D$}
		\For{$s=1,2,\dots$}
		\State Compute (incrementally) an orthonormal basis $U_s$ for $\mathcal{EK}_s(A_{\mathsf{cl}},U)$ via the extended block Arnoldi algorithm \label{step:arnoldi}
		\State $\widetilde A_{\mathsf{cl}}\gets U_s^\top A_{\mathsf{cl}} U_s$, \quad $\widetilde F \gets U_s^\top FU_s$,\quad $\widetilde Q\gets U_s^\top UDU^\top U_s$
		\State $Y_s\gets$  \Call{dense\_care}{$\widetilde A_{\mathsf{cl}},\widetilde F,\widetilde Q$}
		\If{Converged}
		\State \Return $\widetilde {\delta X} =U_sY_sV_s^\top$
		\EndIf
		\EndFor 
		\EndProcedure
	\end{algorithmic}
\end{algorithm}

\begin{remark} 
We stress that, for solving \eqref{eq:riccati_correction}, we might replace EKSM with other low-rank solvers such as the rational Krylov subspace method~\cite{simoncini14,simoncini16b} or ADI-type methods~\cite{radi18} that offer analogous computational benefits. Such methods require to select some shift parameters that are based on an estimate of the spectral properties of the coefficients. This is crucial to guarantee a fast convergence, that is potentially superior to the one of EKSM. On the other hand, EKSM has been observed to have a fast convergence on the numerical tests considered in this work, so we decided to keep this method that \upd{does} not need additional parameters. 
\end{remark}

\subsubsection{A divide-and-conquer scheme}
We now derive a divide-and-conquer method for  equations of the form \eqref{eq:care} with HSS coefficients $A,F,Q$; the paradigm is  similar to the divide-and-conquer methods discussed in \cite{kressner20}, and \cite{kressner19}.
 
As already seen in Section~\ref{sec:decay-care}, the solution of~\eqref{eq:care} can be decomposed as $X=X_0+\delta X$ where $X_0$ is the stabilizing solution of \eqref{eq:riccati_diag}, that has block diagonal coefficients, while $\delta X$ is the stabilizing solution of \eqref{eq:riccati_correction}, that has a low-rank right-hand side. In particular, if the diagonal blocks of the coefficients in \eqref{eq:riccati_diag} corresponds to those identified at the first level of the HSS representations of $A,F$, and $Q$, then this equation can be split into two CAREs (with halved sizes) that have again HSS coefficients. Thus, a natural idea is to  recursively solve the CAREs associated with the diagonal
blocks, form the block diagonal solution $X_0$ of \eqref{eq:riccati_diag}, approximate the solution $\delta X$ of \eqref{eq:riccati_correction} via Algorithm~\ref{alg:EKSM}, and finally return $X_0+\delta X$. The resulting procedure, that we denote by \textsc{d\&c\_care}, is summarized in Algorithm~\ref{alg:dac_care}. 

We remark that the low-rank factorization computed at line~\ref{step:lr-rhs} might be redundant and we recommend to apply a recompression procedure, such as \cite[Algorithm 2.17]{hackbusch15}, that aims at reducing its rank. Similarly, the operations at line~\ref{step:sum1} and at line~\ref{step:sum2} may provide HSS representations of the output with higher HSS ranks than necessary; also here, we recommend to apply a recompression procedure tailored to the HSS format, e.g., \cite[Algorithm 3]{xia2012complexity}. 

The computational complexity of Algorithm~\ref{alg:dac_care} depends on the convergence of \textsc{eksm}, and to the cost of the method used for solving the small dense equations associated with the diagonal blocks on the lowest
level of the recursion. To provide some insights on the typical cost,
we make the following idealistic assumptions:
\begin{itemize}
	\item $n=2^pn_{\min}$ and the partitioning \eqref{eq:splitting} always generate equally sized
	diagonal blocks;
	\item \textsc{eksm} converges in a constant number of iterations;
	\item solving the dense unstructured CAREs at the base of the recursion costs $\mathcal O(n_{\min}^3)$;
	\item the matrices $A,Q,F$ have HSS rank bounded by $r$;
	\item the recompression procedures at lines~\ref{step:lr-rhs}, \ref{step:sum1}, and \ref{step:sum2}, always provide outcome with $\mathcal O(r)$ rank or $\mathcal O(r)$ HSS rank. 
\end{itemize}
Under the above assumptions, we have that the cost of each call of \textsc{eksm} is $\mathcal O(nr^2)$, and is dominated by the cost of computing the ULV factorization of $A_{\mathsf{cl}}$, and solving $\mathcal O(r)$ linear systems with triangular HSS matrices. Thanks to the other assumptions, the HSS rank of $X_0$ is $\mathcal O(r)$ and the cost of all the lines in Algorithm~\ref{alg:dac_care}, apart from the recursive calls, is dominated by $\mathcal O(nr^2)$. Thus, the cost of solving an equation of size $n$ is given by $\mathcal O(nr^2)$ plus twice the cost of solving an equation of size $\frac n2$. By applying the master theorem, we get an overall complexity of $\mathcal O(n\log(n)r^2)$.  
\begin{algorithm}
	\caption{Divide-and-conquer method for CAREs with HSS coefficients}\label{alg:dac_care}
	\begin{algorithmic}[1]
		\Procedure{d\&c\_care}{$A,F,Q$}\Comment{Solve $A^\top X+XA-XFX+Q=0$}
		\If{$A$ is a dense matrix}
		\State \Return \Call{dense\_care}{$A,F,Q$}
		\Else
		\State Decompose \[A =\begin{bmatrix}
			A_{11}&0 \\ 0&A_{22}
		\end{bmatrix}+U_AV_A^\top,\ F=\begin{bmatrix}
		F_{11}&0 \\ 0&F_{22}
	\end{bmatrix} + U_FD_FU_F^\top,\, Q=\begin{bmatrix}
			Q_{11}& 0\\ 0&Q_{22}
		\end{bmatrix}+U_QD_QU_Q^*\]
		\State $X_{11} \gets $ \Call{d\&c\_care}{$A_{11},B_{1},Q_{11}$}
		\State $X_{22} \gets $ \Call{d\&c\_care}{$A_{22},B_{2},Q_{22}$}
		\State \label{line:X0} Set $X_0\gets\begin{bmatrix}
			X_{11} & 0 \\ 0&X_{22}
		\end{bmatrix}$
		\State \label{line:largerlowrank} Set $U=[U_Q, V_A, X_0U_A,X_0U_F]$ and 
		$D$ as in~\eqref{eq:rhs-fact}. \Comment{Recompression is recommended}\label{step:lr-rhs}
		\State $A_{\mathsf{cl}}\gets A-FX_0$\label{step:sum1} \Comment{HSS recompression is recommended}
		\State \label{line:lrupdate} $\delta X\gets $ \Call{low\_rank\_care}{$A_{\mathsf{cl}},F,U,D$}
		\State \label{step:hodlrcompress} \Return $X_0+\delta X$ \Comment{HSS recompression is recommended} \label{step:sum2} 
		\EndIf
		\EndProcedure
	\end{algorithmic}
\end{algorithm}

\subsubsection{Testing the complexity of Algorithm~\ref{alg:dac_care}}\label{sec:caresolve}
Let us showcase the scaling property of Algorithm~\ref{alg:dac_care} on CAREs with randomly generated quasiseparable coefficients of increasing sizes or increasing quasiseparable orders. We consider five equations of the form \eqref{eq:care}, each one identified by a coefficient $A_j$, for $j=1,\dots,5$, but sharing the same  quadratic coefficient $F$ and right-hand side $Q$. In the first $4$ tests we select parameter dependent matrices of size $n=500\cdot 2^h$, with $h=0,\dots, 4$; in the last test we fix $n=2000$ and we take $\qsrank(A_5)=2^h$, with $h=1,\dots, 5$.	The major difference between the first four case studies is the numerical range of the coefficient $A_j$. In test $1$ and $2$ the distance between the numerical range of $A_j$ and the imaginary axis remain constant as $n$ increases; instead, in test $3$ and $4$, $\mathcal W(A_j)$ gets closer to $\mathbf i\cdot \mathbb R$, as $n\to\infty$. In test $5$ we take a matrix with the same spectral properties of $A_1$ and we only vary its quasiseparable rank. 

To generate quasiseparable matrices of low order with a prescribed numerical range, we leverage random unitary upper Hessenberg matrices generated as in Example~\ref{ex:decay2}. More precisely, in test $1$ and $3$, we set $A_j= W_jD_jW_j^H$ where $W_j$ is obtained as the $Q$ factor of the QR factorization of a random upper Hessenberg matrix, and $D_j$ is a diagonal matrix with prescribed spectrum. Test $5$ is generated analogously apart from the matrix $W_5$ that is obtained starting from a random upper Hessenberg matrix with $2^h$ subdiagonals. In test $2$ and $4$, we set $A_j=W_j- \theta_j I_n$, with $\theta_j\in\mathbb R$ and $W_j$ unitary upper Hessenberg matrix generated as before. Note that, in the first $4$ tests the matrix  $A_j$ is normal, and of quasiseparable order at most $2$. $\mathcal W(A_1),\mathcal W(A_3)$ are given by the convex hull of the diagonal entries of $D_1$, and $D_3$, respectively. $\mathcal W(A_2),\mathcal W(A_4)$ are enclosed in the disc of radius $1$ and center $-\theta_j$, $j=2,4$. We recap the five choices of $A_j$ below:
\begin{itemize}
	\item $A_1=W_1 D_1 W_1^H$, where $D_1$ has logarithmically spaced diagonal entries in $[-1, -0.001]$ (in Matlab: \texttt{D1=-diag(logspace(-3, 0, n))}), 
	\item $A_2=W_2 - 2 \cdot I_n$, 
	\item $A_3=W_3 D_3 W_3^H$, where $D_3$ has logarithmically spaced diagonal entries in $[-1, -\frac{1}{n^2}]$ (in Matlab: \texttt{D3=-diag(logspace(-2*log10(n), 0, n))}),
	\item $A_4=W_4 - (1 + \frac{1}{\log{n}}) \cdot I_n$.
	\item $A_5=W_5 D_1 W_5^H$, where $n=2000$, and \texttt{[$W_5$,\textasciitilde]=qr(triu(randn(n), -r))}, $r\in \{2,4,8,16,32\}$. 
\end{itemize}
For all test cases, the quadratic coefficient and the right-hand side are set as
\begin{itemize}
	\item $F=W_F D_F W_F^H$, where $D_F$ has logarithmically spaced diagonal entries in $[0.01, 100]$ (in Matlab: \texttt{DF = diag(logspace(-2, 2, n))}),
	\item $Q=W_Q D_Q W_Q^H$, where $D_Q$ has equispaced diagonal entries in $[0, 1]$ (in Matlab: \texttt{DQ = diag(linspace(0, 1, n))}),
\end{itemize}
and $W_F,W_Q$ are random unitary upper Hessenberg matrices, generated as $W_j$, $j=1,\dots,4$. 

The five numerical tests described above, and all the other experiments in this paper,  have been run on a server with two Intel(R) Xeon(R) E5-2650v4 CPU with 12 cores and 24 threads each, running at 2.20 GHz, using
MATLAB R2021a with the Intel(R) Math Kernel Library Version 11.3.1. Algorithm~\ref{alg:EKSM} is stopped when the approximate solution attains a relative residual norm less than $10^{-8}$. Concerning the HSS representation: the minimum
block size parameter $n_{\min}$ is set to $250$ and the tolerance for the compression of the low-rank blocks is set to $10^{-10}$.   

The performances of Algorithm~\ref{alg:dac_care} and of the Matlab function \texttt{icare} are reported in Table~\ref{tab:hss-complexity1} where the label `Time' denotes the CPU time of the execution of the algorithm and  `Res' indicates the relative residual 
$$\mathrm{Res}:=\norm{A^\top X-XA^\top-XFX+Q}_F/\norm{Q}_F,$$
associated with the corresponding approximate solution.
For Algorithm~\ref{alg:dac_care}, we also report the HSS rank of the latter approximate solution. As the numerical tests involved randomly generated matrices, the quantities reported in Table~\ref{tab:hss-complexity1} have been averaged over $20$ runs.

We see that, in test 1,2, and 3, the HSS rank of the solution is almost independent on $n$, and the timings of our Algorithm~\ref{alg:dac_care} scales linearly with $n$; in Test $4$ the HSS rank of the solution mildly grows with the size of the problem, and this makes the CPU time of Algorithm~\ref{alg:dac_care} to grow slightly more than linearly. The reference dense method, \texttt{icare}, has a higher time consumption in all tests and its dependency on the parameter $n$ is roughly cubic; on the largest instances of the tests, Algorithm~\ref{alg:dac_care} achieves  speed-up factors from $65\times$ to $700\times$. The residual norms associated with \texttt{icare} are 3 to 4 order of magnitude more accurate than those of Algorithm~\ref{alg:dac_care}; this is due to the tolerance $10^{-8}$, used to stop \textsc{eksm} when solving the CAREs with a low-rank right-hand side encountered in the divide-and-conquer scheme.  

The timings in test $5$ indicates an almost quadratic scaling of the computational costs with respect to the quasiseparable order of the coefficients, when the latter is sufficiently high, i.e., $r\ge 8$. Comments similar to the other tests, apply to the residuals norms measured in this test.

\begin{filecontents}[overwrite]{testcomplexity1.dat}
500		1.6934	0.12646		4.4103e-10	7.8545e-11	20.8	0.41039	2.1312	1.7337e-14	1.392	0.02948		1.431e-10	1.8348e-11	29		0		2.113	1.2935e-14
1000	3.2888	0.080416	5.2424e-10	1.2556e-10	30.9	0.64072	15.002	1.6939e-14	3.0711	0.040577	2.4602e-10	3.5942e-11	52.25	0.44426	11.062	1.7543e-14
2000	6.222	0.15788		4.7548e-10	7.5889e-11	30.3	0.47016	108.23	2.0654e-14	6.6171	0.056717	3.3279e-10	1.0589e-10	53.2	0.41039	75.088	2.2824e-14
4000	11.955	0.11937		4.981e-10	2.594e-11	30.35	0.67082	1976.5	2.5784e-13	14.297	0.089958	2.2948e-10	2.0048e-11	53.25	0.44426	544.46	3.2472e-14
8000	23.507	0.13967		4.4214e-10	2.4869e-11	30.25	0.55012	15705	6.575e-13	30.27	0.10438		3.811e-10	2.9076e-11	53.95	0.22361	4073	4.5818e-14
\end{filecontents}
\begin{filecontents}[overwrite]{testcomplexity2.dat}
500		1.7671	0.045528	4.6415e-10	8.1436e-11	20.8	0.41039	2.719	2.4886e-14	2.0687	0.12895		3.8289e-10	1.9648e-10	39		0.32444	2.2222	1.8193e-14
1000	3.6108	0.076041	4.0433e-10	1.0222e-10	30.5	0.51299	18.126	2.185e-14	5.778	0.16255		4.8735e-10	1.5751e-10	84.5	0.60698	12.339	2.3248e-14
2000	6.9264	0.047786	4.1256e-10	5.5137e-11	30.6	0.50262	152.14	2.5136e-14	13.895	0.085288	4.6304e-10	4.544e-11	98.95	0.39403	88.492	2.9476e-14
4000	12.702	0.0954		4.7248e-10	3.2253e-11	30.65	0.48936	2225.6	2.634e-13	33.461	0.53246		5.4431e-10	4.8754e-11	108.6	0.59824	653.44	3.8532e-14
8000	24.783	0.26042		4.0529e-10	3.103e-11	29.7	0.80131	18591	9.5624e-13	76.22	0.74432		5.8021e-10	2.3932e-11	114.75	0.71635	5030.8	5.5196e-14
\end{filecontents}
\begin{filecontents}[overwrite]{testcomplexity_rank.dat}
2       5.4406  0.23084 3.3063e-10      7.1287e-11      25.7    0.71414 117.56  7.3642  2.4321e-14      8.5428e-16
4       6.3991  0.18632 4.0635e-10      8.1322e-11      33.65   0.57228 114.98  7.2838  2.4218e-14      7.8665e-16
8       11.524  0.71605 3.9635e-10      7.0632e-11      50.25   0.82916 118.18  7.0677  2.4093e-14      1.0501e-15
16      37.249  2.577   4.4246e-10      4.7602e-11      95.65   1.3143  119.38  7.652   2.3468e-14      1.0729e-15
32      114.87  1.7133  9.488e-10       8.329e-11       218.9   1.044   117.46  7.4653  2.3055e-14      1.4317e-15
\end{filecontents}
	\begin{table}[h!]
		\centering
	\caption{Comparison of the performances of  Algorithm~\ref{alg:dac_care} and \texttt{icare}, when solving the five CAREs considered in Section~\ref{sec:caresolve}; the results are averaged over $20$ runs.}
	\vspace{.3cm}
	
		\pgfplotstabletypeset[
		every head row/.style={
			before row={
				\toprule
				\multicolumn{6}{c}{\textbf{Test 1}}\\
				\hline			
				\multicolumn{1}{c|}{}&
				\multicolumn{3}{c|}{\textsc{d\&c\_care}}&
				\multicolumn{2}{c}{\texttt{icare}}\\
			},
			after row = \midrule,
		},
		columns = {0,1,3,5,7,8}, 
		columns/0/.style = {column name = $n$, column type=c|},
		columns/1/.style = {column name = Time,precision=1,zerofill, fixed},
		columns/3/.style = {column name = Res,precision=1,zerofill},
		columns/5/.style = {column name = HSS rank,, column type=c|,precision=1,zerofill, fixed,precision=1,zerofill, fixed},
		columns/7/.style = {column name = Time ,precision=1,zerofill, fixed},
		columns/8/.style = {column name = Res,precision=1,zerofill},
		]{testcomplexity1.dat}\vspace{.3cm}
		
				\pgfplotstabletypeset[
		every head row/.style={
			before row={
				\toprule
\multicolumn{6}{c}{\textbf{Test 2}}\\
\hline			
\multicolumn{1}{c|}{}&
\multicolumn{3}{c|}{\textsc{d\&c\_care}}&
\multicolumn{2}{c}{\texttt{icare}}\\
			},
			after row = \midrule,
		},
		columns = {0,9,11,13,15,16}, 
		columns/0/.style = {column name = $n$, column type=c|},
		columns/9/.style = {column name = Time,precision=1,zerofill, fixed},
		columns/11/.style = {column name = Res,precision=1,zerofill},
		columns/13/.style = {column name = HSS rank, column type=c|, precision=1,zerofill, fixed},
		columns/15/.style = {column name = Time,precision=1,zerofill, fixed},
		columns/16/.style = {column name = Res,precision=1,zerofill, column type=c},
		]{testcomplexity1.dat}
		\vspace{.3cm}
		
				\pgfplotstabletypeset[
		every head row/.style={
			before row={
				\toprule
\multicolumn{6}{c}{\textbf{Test 3}}\\
\hline			
\multicolumn{1}{c|}{}&
\multicolumn{3}{c|}{\textsc{d\&c\_care}}&
\multicolumn{2}{c}{\texttt{icare}}\\
			},
			after row = \midrule,
		},
		columns = {0,1,3,5,7,8}, 
		columns/0/.style = {column name = $n$, column type=c|},
		columns/1/.style = {column name = Time,precision=1,zerofill, fixed},
		columns/3/.style = {column name = Res,precision=1,zerofill},
		columns/5/.style = {column name = HSS rank, column type=c|,precision=1,zerofill, fixed},
		columns/7/.style = {column name = Time,precision=1,zerofill, fixed},
		columns/8/.style = {column name = Res,precision=1,zerofill},
		]{testcomplexity2.dat}
		\vspace{.3cm}
		
		\pgfplotstabletypeset[
		every head row/.style={
			before row={
				\toprule
\multicolumn{6}{c}{\textbf{Test 4}}\\
\hline			
\multicolumn{1}{c|}{}&
\multicolumn{3}{c|}{\textsc{d\&c\_care}}&
\multicolumn{2}{c}{\texttt{icare}}\\
			},
			after row = \midrule,
		},
		columns = {0,9, 11,13,15,16}, 
		columns/0/.style = {column name = $n$, column type=c|},
		columns/9/.style = {column name = Time,precision=1,zerofill, fixed},
		columns/11/.style = {column name = Res,precision=1,zerofill},
		columns/13/.style = {column name = HSS rank, column type=c|,precision=1,zerofill, fixed},
		columns/15/.style = {column name = Time,precision=1,zerofill, fixed},
		columns/16/.style = {column name = Res,precision=1,zerofill, column type=c},
		]{testcomplexity2.dat}
		\vspace{.3cm}
		
				\pgfplotstabletypeset[
		every head row/.style={
			before row={
				\toprule
				\multicolumn{6}{c}{\textbf{Test 5}}\\
				\hline			
				\multicolumn{1}{c|}{}&
				\multicolumn{3}{c|}{\textsc{d\&c\_care}}&
				\multicolumn{2}{c}{\texttt{icare}}\\
			},
			after row = \midrule,
		},
		columns = {0,1,3,5,7,9}, 
		columns/0/.style = {column name = $\qsrank(A_5)$, column type=c|},
		columns/1/.style = {column name = Time,precision=1,zerofill, fixed},
		columns/3/.style = {column name = Res,precision=1,zerofill},
		columns/5/.style = {column name = HSS rank, column type=c|,precision=1,zerofill, fixed},
		columns/7/.style = {column name = Time,precision=1,zerofill, fixed},
		columns/9/.style = {column name = Res,precision=1,zerofill, column type=c},
		]{testcomplexity_rank.dat}
	\label{tab:hss-complexity1}
\end{table}

\section{The banded case}\label{sec:banded}
Specific instances of the problem considered so far are CAREs with banded coefficients, i.e., the offdiagonal blocks of the matrices are both low-rank and sparse. Throughout this section we denote by $\beta_a$, $\beta_f$, and $\beta_q \in \mathbb{N}$ the bandwidths of  $A$, $F$ and $Q$, respectively, and we assume that these parameters are significantly smaller than the size of the matrices. In the latter scenario, a natural question is whether the solution $X$ is also approximately banded. We see that an offdiagonal decay in the magnitude of the entries can be formally proved in the case $A$ symmetric positive definite, and $F=I$. In view of formula \eqref{eq:caresol}, and in the spirit of results ensuring decay properties for functions of sparse matrices~\cite{benzi-boito}, we consider approximations of the form $X\approx p(A^2+Q)+A$, where $p(z)$ is a polynomial approximation of $\sqrt{z}$ on an interval containing the spectrum of $A^2+Q$. If $h$ is the degree of $p(z)$, then $p(A^2+Q)+A$ has bandwidth at most $h\cdot \max\{2\beta_a,\beta_q\}$; this implies the inequality
\begin{equation}\label{eq:decay-band}
|X_{ij}|\le \min_{\mathrm{deg}(p)\le h}\norm{X-p(A^2+Q)-A}_2\le \min_{\mathrm{deg}(p)\le h}\ \max_{z\in[a^2, b^2+\norm{Q}_2]}|\sqrt{z}-p(z)|,
\end{equation}
where $h=\left\lceil\frac{|i-j|}{\max\{2\beta_a,\beta_q\}} \right\rceil-1$, and $i\neq j$. 

Inequality \eqref{eq:decay-band} establishes a direct link between the sparsity of $X$ and the polynomial approximation property of the square root function over $[a^2, b^2+\norm{Q}_2]$. In particular, we expect a weaker decay in the offdiagonal entries of $X$, when $a^2$ gets closer to $0$, see also Figure~\ref{fig:decay-band}.

When $F\neq I$, and/or $A$ is not symmetric we can not rely on formula \eqref{eq:caresol} to rigorously analyse the sparsity of $X$; on the other hand we expect the offdiagonal decay property in more general settings so we aim at providing a procedure that exploits the approximate banded structure if present. With the latter goal in mind, we will briefly review inexact Newton-Kleinman schemes for \eqref{eq:care}, and then\upd{,} we will analyse their behaviour when incorporating a thresholding mechanism along their iterations.
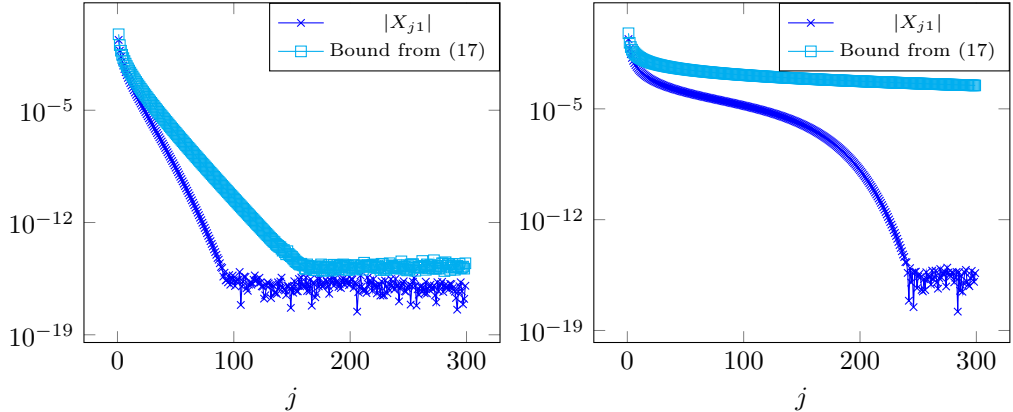
\begin{figure}
\centering
\begin{tikzpicture}
	\begin{semilogyaxis}
		[legend pos = north east, ymax=5e1, xlabel=$j$, legend style={font=\scriptsize, at={(1,1)}}, width=.48\textwidth]
		\addplot[mark=x, blue] table[x index = 0, y index = 1]{banddecay.dat};
		\addplot[mark=square, cyan] table[x index = 0, y index = 2]{banddecay.dat};
		\legend{$|X_{j1}|$, Bound from \eqref{eq:decay-band}};
	\end{semilogyaxis}
\end{tikzpicture}
\begin{tikzpicture}
\begin{semilogyaxis}[legend pos = north east, ymax=5e1, xlabel=$j$, legend style={font=\scriptsize, at={(1,1)}}, width=.48\textwidth]
\addplot[mark=x, blue] table[x index = 0, y index = 3] {banddecay.dat};
\addplot[mark=square, cyan] table[x index = 0, y index = 4] {banddecay.dat};
\legend{$|X_{j1}|$, Bound from \eqref{eq:decay-band}};
\end{semilogyaxis}
	\end{tikzpicture}
\caption{Magnitude of the entries in the first column of the solution $X\in\mathbb R^{300\times 300}$ of a CARE with $A$ diagonal with positive entries, $F=I$, and $Q$ symmetric positive definite and tridiagonal. On the left, it is set $A=$ \texttt{diag(logspace(-1, 0, 300))}, on the right $A=$ \texttt{diag(logspace(-3, 0, 300))}. For both cases, the matrix $Q$ is chosen as in Example~\ref{ex:decay}. The upper bound \eqref{eq:decay-band}, denoted by cyan squares, has been computed via the \texttt{minimax} function of the \texttt{chebfun} toolbox.}\label{fig:decay-band}
\end{figure}

\subsection{Inexact Newton-Kleinman methods}\label{sec:tink}
A well established iterative scheme for solving  \eqref{eq:care} consists in applying the Newton's method to the quadratic equation $\mathcal R(X)=0$. This approach, known in the literature as Newton-Kleinman \upd{(NK)} iteration~\cite{kleinman1968iterative}, yields a sequence denoted as $\{X_k\}_k$, where the $(k+1)$-\upd{st} term is defined as the solution of the Lyapunov equation:

\begin{equation}
A_{cl,k}^\top X_{k+1} + X_{k+1} A_{cl,k}= -X_k F X_k-Q,
\label{newton}
\end{equation}
where the matrix  $A_{cl,k}:= A-F X_k$  is called the \emph{closed-loop matrix at the $k$-th step}.
If the Newton-Kleinman iteration is initialized with a positive semidefinite  matrix $X_0$, such that $(A-F X_0)$ possesses eigenvalues solely within the open left half-plane, then all the  matrices $A_{cl,k}$ are stable, $X_k$ converges quadratically to a solution of \eqref{eq:care}, and the convergence is monotonically nonincreasing (for $k\ge 1$) with respect to the the L\"owner order \cite[Theorem 3.1]{bini2011numerical}.  

In the large-scale setting, the use of iterative solvers for the Lyapunov equations \eqref{newton} has been proposed to enhance the computational efficiency of the Newton-Kleinman iteration; these variants are known as \emph{inexact Newton-Kleinman methods} \cite{feitzinger2009inexact,benner2016inexact}. More formally, these methods replace $\{X_k\}_k$ with a sequence of approximations $\{\widetilde X_k\}_k$ satisfying
\begin{equation}
	\tilde{A}_{cl,k}^\top  \tilde{X}_{k+1} + \tilde{X}_{k+1} \tilde{A}_{cl,k} +   \tilde{X}_{k} F \tilde{X}_{k}+Q = R_{k+1},
	\label{inexactNK}
\end{equation}
where $\tilde{A}_{cl,k}:=A-F\tilde X_k$, and the residual matrices $R_k$ have small norms.
This introduces some inexactness in the Newton's scheme, and makes non trivial to guarantee that $\{\tilde{X}_k\}_k$ retains the convergence properties of $\{X_k\}_k$. For instance, in \cite{benner2016inexact} it has been proposed the use of a line search procedure to ensure a monotone decrease of the norm of the Riccati residuals, and attain a global convergence.

\subsubsection{A novel algorithm tailored to the banded case}
\label{subsec:novel_alg}
In this section we propose and analyse an inexact Newton-Kleinman iteration that takes advantage of the banded structure of the coefficients of the CARE \eqref{eq:care}. The idea is to select a banded starting matrix $\tilde X_0$ and to employ an iterative solver for the Lyapunov equations \eqref{newton} that maintains a sufficient level of sparsity when generating the inexact sequence $\{\widetilde X_k\}_k$.  More precisely, we propose the use of the Generalized Minimal Residual (GMRES) method to approximate the solution of the $(n^2\times n^2)$-linear system
\begin{equation}
	\tilde{\mathcal{A}}_{k} x = b_{k},
	\label{linear_system_lyap}
\end{equation}
where $\tilde{\mathcal{A}}_{k}=\left(I \otimes \tilde{A}_{cl,k}^\top + \tilde{A}_{cl,k} \otimes I \right)$ and $ b_{k} = -\text{vec}(\tilde{X}_{k} F \tilde{X}_{k}+Q)$,
that is equivalent to the Lyapunov equation \eqref{inexactNK}, without the residual term. Solving this system inexactly, and reshaping the approximate solution $\widehat x_{k+1}$, yields  $\widehat  {X}_{k+1} = \text{vec}^{-1}(x_{k+1})\in\mathbb R^{n\times n}$. The key property of this iterative scheme is that if $\widetilde X_k$ is symmetric with bandwidth $s_k$,  and $\overline{it}_k$ iterations of GMRES have been executed starting from a zero initial guess, then $\widehat X_{k+1}$ is again symmetric and its bandwidth is at most $(\overline{it}_k-1)\beta_a+\beta_f+\beta_q+2s_k$.
In addition, any line search strategy that linearly combines $\tilde X_k$ and $\widehat X_{k+1}$, does not increase the bandwidth of the iterate and preserves the symmetry.

Thus, we follow the approach in \cite{benner2016inexact}\footnote{In \cite{benner2016inexact} an extra condition is required on the residual of the Lyapunov equation \eqref{newton}, $i.e.$ $\norm{R_k }_2 \le \eta_k \norm{ \mathcal{R}(\tilde{X}_k) }_2$, with $\eta_k \in (0,1)$. Since this condition is not needed for the convergence of the algorithm, it has been avoided in the current work.} and select  $\lambda_k \in (0,1)$ such that
\begin{equation}
	\norm{ \mathcal{R}((1-\lambda_k) \tilde{X}_k + \lambda_k \widehat {X}_{k+1}) }_2 \le (1-\lambda_k \alpha) \norm{ \mathcal{R}(\tilde{X}_k) }_2,
	\label{cond_linesearch}
\end{equation}
for a certain $\alpha>0$;
we remark that the parameter $\lambda_k$ is obtained by computing one of the roots of a quartic scalar polynomial \cite[equation 3.11]{benner2016inexact}. Then, we set
$\tilde X_{k+1}:=(1-\lambda_k) \widetilde{X}_k + \lambda_k \widehat {X}_{k+1}.
$

We remark that, although the bandwidth grows at most linearly with respect to $\overline{it}_k$ along the outer iterations of Algorithm~\ref{alg_NK}, sparsity might be lost too quickly.  For this reason, we also consider a thresholding mechanism that keeps under control the bandwidth of the sequence $\{\tilde{X}_k\}_k$.  Formally, we introduce the truncation operator $\mathcal{T}_s: \mathbb{R}^{n\times n} \rightarrow \mathbb{R}^{n\times n}$, defined as
\begin{equation*}
	(\mathcal{T}_s(M))_{i,j} = \begin{cases}
		M_{i,j}, & |i-j| \le s,\\
		0, & |i-j| > s,
	\end{cases}
	\label{banded}
\end{equation*}
and we overwrite $\widetilde X_{k+1}$ with  $\mathcal T_s(\widetilde X_{k+1})$, to eventually reduce its bandwidth.
As discussed in the next section, the parameter $s$ used in this thresholding operation will be chosen in an adaptive way along the iterations, to ensure that the convergence properties of the sequence are preserved. Intuitively, when $\widetilde X_k$ has an offdiagonal decay applying $\mathcal{T}_s$ provides a significant reduction of storage and computational costs without dramatically increasing the number of iterations of the Newton scheme, needed to attain a sufficient accuracy. 

We refer to this procedure with the acronym \textsc{tink} that stands for \emph{Truncated Inexact Newton-Kleinman iteration}; the pseudocode of \textsc{tink} is reported in Algorithm~\ref{alg_NK}.

\begin{algorithm}[h]
	\caption{Truncated inexact Newton-Kleinman iteration for Riccati equations}
	\begin{algorithmic}[1]
		\Procedure{tink}{$A$, $F$, $Q$, $\widetilde X_0$, $\mathsf{tol}$, $k_{\max}$, $\mathsf{linesearch}$, $\mathsf{truncation}$}
		\For{$k=0,1,\dots, k_{max}-1$}
		\If{$\norm{ \mathcal{R}(\widetilde X_{k}) }_2 < \mathsf{tol}$}
		\State \textbf{break}
		\EndIf
		\State{Run $\overline{it}_k$ iteration steps of GMRES on  $\tilde{\mathcal A}_k x =-\text{vec}\left( \tilde{X}_{k} F \tilde{X}_{k}+Q\right) $ and obtain $\widehat{X}_{k+1}$}
		\If{$\mathsf{linesearch}$}
		\State{Find $\lambda_k$ such that \eqref{cond_linesearch} holds true}
		\Else
		\State{$\lambda_k \gets 1$}
		\EndIf
		\State{$\tilde X_{k+1} = (1-\lambda_k) \tilde X_k + \lambda_k \widehat{X}_{k+1}$}
		\If{$\mathsf{truncation}$}
		\State Determine $s_k$ such that \eqref{eq:res_care_trunc} and \eqref{eq:res_lyap_trunc} hold true
		\Else
		\State $s_k\gets n$
		\EndIf
		\State{$\tilde X_{k+1} \gets \mathcal{T}_{s_k}(\tilde X_{k+1})$}
		\EndFor
		\State \Return{$\widetilde X_k$}
		\EndProcedure
	\end{algorithmic}
	\label{alg_NK}
\end{algorithm}

\subsubsection{Convergence analysis}

Here we show that for certain choices of the number of inner iterations $\overline{it}_k$ and of the truncation parameters $s_k$, we can ensure  the convergence of the sequence generated by Algorithm~\ref{alg_NK}. We start with the case in which there is no truncation, i.e., $s_k$ is chosen equal to $n$ for all $k\ge 0$. At the end we will clarify under which conditions the results extend to the case with truncation.  

The convergence analysis is based on two principal ingredients: the monotone decrease of the Riccati residuals and the stabilization of the closed-loop matrices in each iteration.

By looking at condition \eqref{cond_linesearch}, we see that the line search step guarantees the decrease of the Riccati residuals.
In absence of a line search procedure, it is possible to ensure just a local convergence of the method, see \cite{feitzinger2009inexact} for further details.

Concerning the stabilization of the closed-loop matrices, we recall Theorem 4.3 in \cite{feitzinger2009inexact} that, under the conditions $\tilde{X}_0$ symmetric and positive semidefinite, $\tilde{A}_{cl,0}$ stable, and
\begin{equation}
	R_{k} \le  Q\qquad \text{for all $k\ge 0$,}
	\label{condition_Rk}
\end{equation}
ensures that for all $k\ge 0$:
\begin{enumerate}
	\item[(i)] the iterate $\widetilde X_{k}$  is well defined, symmetric and positive semidefinite,
	\item[(ii)]  the matrix $\widetilde {A}_{cl,k}= A-F\widetilde X_{k}$ is stable.
\end{enumerate}

Note that, since Algorithm~\ref{alg_NK} preserves the symmetry of the starting point $\tilde X_0$ throughout its iterations, the associated residual matrices $R_k$ are symmetric, and condition~\eqref{condition_Rk} is satisfied whenever $\norm{R_k}_2\le \lambda_{\min}(Q)$. Combining this observation with a convergence bound for the GMRES method, yields the following result.

\begin{theorem}\label{thm:GMRESstable}
Let $\mathcal{W}(L^{-1}A L) \subset \mathbb{C}^{-}$, $\tilde{X}_0$ be symmetric and positive semidefinite such that $\tilde{A}_{cl,0}$ is stable,  $Q$ be symmetric positive definite, and introduce the matrices 

$$\tilde{\mathcal L}_k:=(L\otimes L^\top)\cdot \tilde{\mathcal A}_k\cdot(L^{-1}\otimes L^{-\top})=I \otimes L^\top\tilde{A}_{cl,k}^\top L^{-\top} + L \tilde{A}_{cl,k} L^{-1} \otimes I .$$
 If in each outer iteration of Algorithm~\ref{alg_NK}, without truncation, the number of GMRES  iterations satisfies
\begin{equation}
	\overline{it}_k \ge 2 \log\left((1+\sqrt{2})\frac{\, \kappa(F)\norm{ \tilde{X}_kF\tilde{X}_k+Q }_F }{\lambda_{min}(Q)}\right)\bigg/ \log\left(1 - \frac{\lambda^2_{min}((\tilde{\mathcal{L}}_{k}+\tilde{\mathcal{L}}_{k}^\top)/2)}{\lambda_{max}(\tilde{\mathcal{L}}_{k}\tilde{\mathcal{L}}_{k}^\top)} \right),
	\label{n_iter}
\end{equation}	
 then\upd{,} for all $k\ge 0$ the iterate $\widetilde X_{k}$  is well defined, symmetric and positive semidefinite,
and  the matrix $\widetilde {A}_{cl,k}$ is stable.
	
\end{theorem}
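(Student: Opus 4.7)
Proof plan. The natural line of attack is induction on $k$, invoking Theorem~4.3 of \cite{feitzinger2009inexact} at each step. That theorem states that, under the base assumptions on $\widetilde X_0$ and $\widetilde A_{cl,0}$, the desired conclusions $(i)$ and $(ii)$ hold provided the inexact Lyapunov residual $R_k$ satisfies $R_k \preceq Q$ in the Loewner order for every $k\ge 0$. So the whole task reduces to checking this Loewner inequality, and the bound \eqref{n_iter} on $\overline{it}_k$ is precisely what is needed to enforce it.

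First I would check that the GMRES iterate is automatically symmetric. Since $\widetilde X_k$ is symmetric by induction, the right-hand side $b_k=-\mathrm{vec}(\widetilde X_k F\widetilde X_k+Q)$ is the vectorization of a symmetric matrix, and the operator $\tilde{\mathcal A}_k$ maps vectorizations of symmetric matrices to themselves. Starting GMRES from $x_0=0$ thus keeps every Krylov iterate in this invariant subspace, so both $\widehat X_{k+1}$ and $R_k$ are symmetric. The Loewner inequality $R_k\preceq Q$ then follows from the scalar estimate $\|R_k\|_2\le \lambda_{\min}(Q)$, because $\lambda_{\max}(R_k)\le \|R_k\|_2$. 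This reduces the problem to bounding $\|R_k\|_2\le \|R_k\|_F$, i.e.\ the GMRES residual after $\overline{it}_k$ inner steps.

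Next comes the GMRES convergence estimate. By the standard minimization property,
\[
\|R_k\|_F \;\le\; \min_{p\in\mathcal P_{\overline{it}_k},\,p(0)=1}\|p(\tilde{\mathcal A}_k)\|_2\,\|b_k\|_2,\qquad \|b_k\|_2=\|\widetilde X_k F\widetilde X_k+Q\|_F.
\]
Using the similarity $\tilde{\mathcal L}_k=T\,\tilde{\mathcal A}_kT^{-1}$ with $T=L\otimes L^\top$ and $\|T\|_2\|T^{-1}\|_2=\kappa(F)$, I get $\|p(\tilde{\mathcal A}_k)\|_2\le \kappa(F)\,\|p(\tilde{\mathcal L}_k)\|_2$. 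A Crouzeix–Palencia type bound on $\tilde{\mathcal L}_k$, giving the constant $(1+\sqrt 2)$, combined with the Eisenstat–Elman–Schultz estimate applied to the transformed operator, yields
\[
\min_{p}\|p(\tilde{\mathcal L}_k)\|_2\;\le\;(1+\sqrt 2)\left(1-\frac{\lambda_{\min}^2\bigl((\tilde{\mathcal L}_k+\tilde{\mathcal L}_k^\top)/2\bigr)}{\lambda_{\max}(\tilde{\mathcal L}_k\tilde{\mathcal L}_k^\top)}\right)^{\overline{it}_k/2}.
\]
Multiplying the two chains of inequalities, requiring the product to be at most $\lambda_{\min}(Q)$, taking logarithms and solving for $\overline{it}_k$ produces exactly the bound \eqref{n_iter}; the induction step is then closed by applying \cite[Theorem~4.3]{feitzinger2009inexact}.

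The main obstacle is Step~4: one must justify that the Elman-type factor is genuinely below one, i.e.\ that the Hermitian part of $\tilde{\mathcal L}_k$ is definite throughout the iteration. The hypothesis $\mathcal W(L^{-1}AL)\subset\mathbb C^-$ enters here through the identity
\[
L^{-1}\widetilde A_{cl,k}L \;=\; L^{-1}AL-L^\top \widetilde X_k L,
\]
whose Hermitian part is (Hermitian part of $L^{-1}AL$) minus the positive semidefinite matrix $L^\top\widetilde X_kL$, hence is negative definite by the inductive hypothesis that $\widetilde X_k\succeq 0$. A parallel argument has to be carried out for the companion factor $L\widetilde A_{cl,k}L^{-1}$ appearing in the Kronecker sum $\tilde{\mathcal L}_k=I\otimes L^\top\widetilde A_{cl,k}^\top L^{-\top}+L\widetilde A_{cl,k}L^{-1}\otimes I$; this is the technically most delicate part of the proof, since the positivity of the perturbation is no longer completely free and requires a direct manipulation of $(LF\widetilde X_kL^{-1}+L^{-\top}\widetilde X_k FL^\top)/2$. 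Once this definiteness is in hand, the rest is a routine chaining of inequalities, and the combination with the line-search condition \eqref{cond_linesearch} (which only affects the convergence rate, not the stability properties) completes the argument.
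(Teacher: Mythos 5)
Your overall strategy is the paper's: reduce everything to the Loewner condition $R_k\preceq Q$ of \cite[Theorem 4.3]{feitzinger2009inexact} via $\Vert R_k\Vert_2\le\lambda_{\min}(Q)$ for symmetric residuals, bound the GMRES residual by the polynomial minimization problem, pull out $\kappa(F)$ through the similarity, apply Crouzeix--Palencia to pass to the numerical range of $\tilde{\mathcal L}_k$, and finish with Elman's estimate. The one place where your plan does not close is exactly the step you flag as ``delicate,'' and as you have set it up it would in fact fail: the Hermitian part of $LF\tilde X_kL^{-1}=LLL^\top\tilde X_kL^{-1}$ is \emph{not} positive semidefinite in general (already $L=\mathrm{diag}(1,2)$ and $\tilde X_k$ the all-ones $2\times2$ matrix give an indefinite symmetric part), so ``a direct manipulation of $(LF\tilde X_kL^{-1}+L^{-\top}\tilde X_kFL^\top)/2$'' is not a viable route. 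The way out — which is what the paper's proof implicitly does when it writes $\mathcal W(L\tilde A_{cl,k}L^{-1})\subseteq\mathcal W(L^{-1}AL)+\mathcal W(-L^\top XL)$ — is to observe that the similarity should be arranged so that \emph{both} Kronecker factors reduce to (the transpose of) $L^{-1}\tilde A_{cl,k}L=L^{-1}AL-L^{-1}(LL^\top)\tilde X_kL=L^{-1}AL-L^\top\tilde X_kL$: since the Lyapunov operator is $I\otimes\tilde A_{cl,k}^\top+\tilde A_{cl,k}^\top\otimes I$, conjugating by $L^\top\otimes L^\top$ turns each factor into $L^\top\tilde A_{cl,k}^\top L^{-\top}=L^\top A^\top L^{-\top}-L^\top\tilde X_kL$, whose Hermitian part is the Hermitian part of $L^{-1}AL$ minus the positive semidefinite $L^\top\tilde X_kL$, hence negative definite; the conditioning of this transformation is still $\kappa(L)^2=\kappa(F)$. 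There is no second, harder factor to treat.

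Two smaller points. First, your dismissal of the line search as affecting ``only the convergence rate'' is not a proof: what is needed, and what the paper supplies, is the convexity inequality $\Vert R_{k+1}\Vert_2\le(1-\lambda_k)\Vert R_k\Vert_2+\lambda_k\Vert\widehat R_{k+1}\Vert_2\le\lambda_{\min}(Q)$ for the residual of the combined iterate $(1-\lambda_k)\tilde X_k+\lambda_k\widehat X_{k+1}$, so that the hypothesis of \cite[Theorem 4.3]{feitzinger2009inexact} is verified for the iterate actually produced. Second, your symmetry argument (GMRES from a zero initial guess stays in the invariant subspace of vectorized symmetric matrices) is a clean justification that the paper leaves implicit; that part is fine and slightly more explicit than the original.
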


\begin{proof}
%
Let us denote by  $\widehat R_{k+1}:= \tilde{A}_{cl,k}^\top  \widehat{X}_{k+1} + \widehat{X}_{k+1} \tilde{A}_{cl,k} +   \tilde{X}_{k} F \tilde{X}_{k}+Q$ the  residual matrix in step $k$ of Algorithm~\ref{alg_NK}, before applying the line search step. If
 $\overline{it}_k$ iterations of GMRES have been applied on the linear system $\tilde{\mathcal A}_k x=- \mathrm{vec}(\tilde{X}_kF\tilde{X}_k+Q)$ then

 \begin{align}\label{eq:gmres}
\frac{\norm{\widehat R_{k+1}}_F}{\norm{\tilde{X}_kF\tilde{X}_k+Q}_F}&\le \min_{\begin{smallmatrix}\mathrm{deg}(p)=\overline{it}_k\\ p(0)=1\end{smallmatrix}}\norm{p(\tilde{\mathcal A}_k)}_2 = \min_{\begin{smallmatrix}\mathrm{deg}(p)=\overline{it}_k\\ p(0)=1\end{smallmatrix}}\norm{ (L^{-1}\otimes L^{-\top})p(\tilde{\mathcal L}_k) (L \otimes L^{\top})}_2 \nonumber\\
&\le \kappa(F) \min_{\begin{smallmatrix}\mathrm{deg}(p)=\overline{it}_k\\ p(0)=1\end{smallmatrix}}\norm{p(\tilde{\mathcal L}_k)}_2\le (1+\sqrt{2})\kappa(F) \min_{\begin{smallmatrix}\mathrm{deg}(p)=\overline{it}_k\\ p(0)=1\end{smallmatrix}}\max_{z\in\mathcal W(\tilde{\mathcal L}_k)}|p(z)|,
\end{align}
where in the last inequality we have used the Crouzeix-Palencia bound~\cite{crouzeix}. Moreover, if $\tilde{X}_k$ is positive semidefinite (that we can assume by induction), then  
\begin{align*}
\mathcal W\left(L^\top\tilde{A}_{cl,k}^\top L^{-\top}\right)&\subseteq \mathcal W\left(L^\top A^\top L^{-\top}\right)+\mathcal W\left(-L^\top XL\right),\\ \mathcal W\left(L\tilde{A}_{cl,k} L^{-1}\right)&\subseteq \mathcal W\left(L^{-1} AL\right)+\mathcal W\left(-L^\top XL\right),
\end{align*}
implying $\mathcal W(\tilde{L}_k)\subset\mathbb C^{-}$, i.e., the Hermitian part of $\tilde{L}_k$ is symmetric negative definite. Actually, we can be more specific and write
$$\mathcal W(\tilde{\mathcal L}_k)\subset\left\{\Re(z)\le \lambda_{\min}\left((\tilde{\mathcal L}_k+\tilde{\mathcal L}_k^\top)/2\right)\right\}\cap\left\{|z|\le \norm{\tilde{\mathcal L}_k}_2\right\}\subset\mathbb C^{-},$$ see \cite{beckerman06, embree2023}. 
 Bounding the polynomial approximation problem in \eqref{eq:gmres} with Elman's estimate \cite[Theorem 5.4]{elman1982iterative},~\cite[page 776]{beckerman06}, and combining this with \eqref{n_iter}, we get
$$
\norm{\widehat R_{k+1}}_2\le \norm{\widehat R_{k+1}}_F\le \lambda_{\min}(Q).
$$
Note that, applying the line search step does not increase the Lyapunov residual norm above $\lambda_{\min}(Q)$, in view of the inequality
$$
\norm{R_{k+1}}_2\le (1-\lambda_k)\norm{R_k}_2+\lambda_k\norm{\widehat R_{k+1}}_2\le \lambda_{\min}(Q). 
$$  Moreover, the closed-loop matrix at the $(k+1)$\upd{-st} step is given by
$$
\tilde{A}_{cl,k+1}=(1-\lambda_k)(A-F\widetilde X_{k}) +\lambda_k(A-F\widehat X_{k+1}),
$$
that is similar to $(1-\lambda_k)(L^{-1}AL-L^\top\widetilde X_{k}L) +\lambda_k(L^{-1}AL-L^\top\widehat X_{k+1}L) $. The latter is a convex combination of matrices with numerical range in $\mathbb C^{-}$, so $\tilde{A}_{cl,k+1}$ is stable.
Finally, to get the claim it is sufficient to apply Theorem 4.3 in \cite{feitzinger2009inexact}.
\end{proof}

\begin{remark}\label{rem:cg}
Sharper estimates for the quantities $\overline{it}_k$ can be obtained by replacing Elman's bound with tighter upper bounds for the polynomial approximation problem in \eqref{eq:gmres}, such as \cite[Theorem 2.1]{beckerman06}. 
	Further, when $\tilde{\mathcal{A}}_k$ is symmetric negative definite it is convenient to replace GMRES with the Conjugate Gradient  method (applied to $-\tilde{\mathcal A}_k x=-b_k$) as the latter requires only 
$$
	\overline{it}_k = \left\lceil\log\left(\frac{2 \, \kappa(\tilde{\mathcal{A}}_{k}) \norm{R_k }_{\tilde{\mathcal{A}}_{k}}}{\lambda_{min}(Q)} \right)\bigg/\log\left( \frac{\sqrt{\kappa(\tilde{\mathcal{A}}_{k})}-1}{\sqrt{\kappa(\tilde{\mathcal{A}}_{k})}+1} \right)\right\rceil,
$$
inner iterations to ensure the claim of Theorem~\ref{thm:GMRESstable}. However, we remark that $\mathcal A_k$ is symmetric negative definite for any $k\ge 0$ only in very peculiar cases. For instance, this happens when $A$ is symmetric negative definite and $F$ commutes with $\tilde{X}_{k}$ for any $k\ge 0$.

\end{remark}

Theorem~\ref{thm:GMRESstable} ensures that, given a  stabilizing positive semidefinite iterate $\tilde X_k$ and a reasonable number of GMRES iterations, the matrix $\tilde X_{k+1}$ is again stabilizing and positive semidefinite. Following a standard line search argument, as done in \cite[Theorem 10]{benner2016inexact},  it is possible to prove that, when the step length $\lambda_k$ is bounded away from zero, the sequence $\{\tilde X_k\}$ converges to the unique stabilizing solution of the CARE \eqref{eq:care}. 

\begin{corollary}[Application of Theorem 10 in \cite{benner2016inexact} to the current setting]
	Under the same assumptions of Theorem~\ref{thm:GMRESstable}, if $\exists \bar\lambda >0$ such that $\lambda_k  \ge \bar\lambda$ for all $k$, then $\norm{ \mathcal{R}(\tilde X_k) }_F \rightarrow 0$, and
	$\tilde X_k \rightarrow X^*$, where $X^* >0$ is the unique stabilizing solution of the CARE \eqref{eq:care}. 
\label{thm:conv_INK}
\end{corollary}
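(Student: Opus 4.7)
The plan is to verify that the hypotheses of Theorem 10 in [benner2016inexact] hold in our setting and then invoke that result directly, since the statement is advertised as an application of that theorem. The three ingredients needed there are: (a) every iterate $\tilde X_k$ is well-defined, symmetric positive semidefinite and has a stable closed-loop matrix $\tilde A_{cl,k}$; (b) the Lyapunov residuals $R_k$ satisfy $R_k \le Q$ in the Löwner order; and (c) a line search producing a step length uniformly bounded away from zero is performed at every outer step.

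First I would observe that (a) and (b) are essentially the content of Theorem~\ref{thm:GMRESstable}. Indeed, the stated number of GMRES iterations guarantees $\norm{R_k}_2 \le \lambda_{\min}(Q)$; together with the fact that the GMRES iterates started from $x_0=0$ preserve the symmetry of the vectorized right-hand side (since $\tilde{\mathcal A}_k$ maps vectorizations of symmetric matrices to themselves), this gives $R_k = R_k^\top$ and
\[
R_k \preceq \norm{R_k}_2 I \preceq \lambda_{\min}(Q) I \preceq Q,
\]
which is exactly hypothesis (b). The positive semidefiniteness and stabilization of $\tilde X_k$ are then the conclusions of Theorem~\ref{thm:GMRESstable}. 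Hypothesis (c) is the assumption $\lambda_k \ge \bar\lambda>0$ of the corollary combined with the line search~\eqref{cond_linesearch} performed inside Algorithm~\ref{alg_NK}.

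With these three ingredients in place, Theorem 10 of [benner2016inexact] yields $\norm{\mathcal R(\tilde X_k)}_F\to 0$ and $\tilde X_k \to X^*$. The mechanism behind that result can be summarized as follows, and is what I would reproduce if one wanted a self-contained proof: iterating~\eqref{cond_linesearch} gives
\[
\norm{\mathcal R(\tilde X_{k+1})}_2 \le (1-\bar\lambda\alpha)\norm{\mathcal R(\tilde X_k)}_2,
\]
so the residuals vanish geometrically; uniform boundedness of the symmetric positive semidefinite iterates (combined with $R_k\preceq Q$) lets one extract cluster points, each of which is positive semidefinite and satisfies $\mathcal R(\cdot)=0$ by continuity; uniqueness of the symmetric positive semidefinite solution of~\eqref{eq:care} under $\mathcal W(A)\subset \mathbb C^-$ forces every cluster point to equal $X^*$, hence the full sequence converges. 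The main obstacle, and the step where the cited proof does the real work, is precisely this uniform boundedness argument: one needs a bound on $\norm{\tilde X_k}_2$ that does not degrade as $k$ grows, and the cleanest way is to use the inequality $R_k\preceq Q$ in conjunction with a comparison argument against the exact Newton-Kleinman iterates, which is exactly what [benner2016inexact] supplies.
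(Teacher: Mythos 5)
Your proposal is correct and follows essentially the same route as the paper: the paper's proof consists precisely of noting that Theorem~\ref{thm:GMRESstable} supplies the hypotheses of \cite[Theorem 10]{benner2016inexact} (symmetric positive semidefinite stabilizing iterates, $R_k\preceq Q$ via $\norm{R_k}_2\le\lambda_{\min}(Q)$, and the step-length bound), after which that theorem is invoked directly. Your additional sketch of the mechanism inside the cited theorem is accurate but goes beyond what the paper records.
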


\begin{proof}
	The proof of this result involves verifying that Theorem~\ref{thm:GMRESstable} ensures the satisfaction of the conditions specified in \cite[Theorem 10]{benner2016inexact}, which, in turn, confirms the claim.
\end{proof}

Let us now consider the effect of the truncation operator $\mathcal{T}_{s_k}(\cdot)$ on the convergence of Algorithm~\ref{alg_NK}. We remark that, the convergence results presented in Corollary~\ref{thm:conv_INK}  hold also after truncation if we maintain the monotone decrease of the Riccati residuals and the stabilizing property of the iterates. The first condition is preserved by requiring that there exists $\zeta\in[0,1)$ such that, for all $k\ge 0$, the parameter $s_k$ is chosen so that the following inequality is satisfied
\begin{equation}
\norm{ \mathcal{R}(\mathcal T_{s_k}((1-\lambda_k)\tilde X_k+\lambda_k\widehat X_k)) }_2 \le (1-\zeta) \norm{ \mathcal{R}(\tilde X_k) }_2.
\label{eq:res_care_trunc}
\end{equation}
The second condition is preserved every time the Lyapunov residual associated with the truncated iterate remains below $\lambda_{\min}(Q)$, i.e., 
\begin{equation}
\norm{\tilde{A}_{cl,k}^\top  \mathcal T_{s_k}((1-\lambda_k)\tilde X_k+\lambda_k\widehat X_k) + \mathcal T_{s_k}((1-\lambda_k)\tilde X_k+\lambda_k\widehat X_k) \tilde{A}_{cl,k} +   \tilde{X}_{k} F \tilde{X}_{k}+Q}_2 \le \lambda_{\min}(Q).
\label{eq:res_lyap_trunc}
\end{equation}
Whenever we select the parameters $s_k$ so that \eqref{eq:res_care_trunc}, and \eqref{eq:res_lyap_trunc} hold, Algorithm~\ref{alg_NK} is ensured to converge. Note that, the choice $s_k=n$ for all $k$ always matches \eqref{eq:res_care_trunc} (for $\zeta=0$), and \eqref{eq:res_lyap_trunc}, and corresponds to the no truncation case.

\begin{corollary}
Under the same assumptions of Corollary \ref{thm:conv_INK}, apart from executing Algorithm~\ref{alg_NK} with truncation,  if \eqref{eq:res_care_trunc} and \eqref{eq:res_lyap_trunc} hold for every $k\ge 0$, then the  sequence $\{\tilde X_k\}_k$ converges to the unique stabilizing positive definite solution of the CARE \eqref{eq:care}.
\end{corollary}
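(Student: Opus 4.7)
The plan is to re-execute the convergence argument of Corollary~\ref{thm:conv_INK} by induction on $k$, since the two truncation conditions \eqref{eq:res_care_trunc} and \eqref{eq:res_lyap_trunc} are crafted precisely to preserve the two ingredients that the original argument exploits: that $\widetilde X_k$ remains a symmetric positive semidefinite matrix with stable closed-loop matrix $\widetilde A_{cl,k}$, and that the Riccati residuals decrease monotonically.

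First I would address the stabilization step. The truncation operator $\mathcal T_{s_k}$ zeroes entries in positions symmetric with respect to the main diagonal, hence preserves the symmetry of its argument; together with the symmetry of $\widetilde X_0$, of the convex combination performed in the line search, and of the vectorized GMRES iterates (which, starting from a zero initial guess with a symmetric right-hand side, remain inside the subspace of vectorizations of symmetric matrices, since the Lyapunov operator maps symmetric matrices to symmetric matrices), this shows that $\widetilde X_{k+1}$ and hence the associated Lyapunov residual $R_{k+1}$ are symmetric. To then invoke \cite[Theorem 4.3]{feitzinger2009inexact}---the engine used in Theorem~\ref{thm:GMRESstable} to propagate stability and positive semidefiniteness---we need the L\"owner ordering $R_{k+1}\le Q$. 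The 2-norm bound $\norm{R_{k+1}}_2\le \lambda_{\min}(Q)$ coming from \eqref{eq:res_lyap_trunc}, together with the symmetry of $R_{k+1}$, upgrades to $R_{k+1}\le \lambda_{\min}(Q)\, I\le Q$, closing the inductive step.

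Second, I would use \eqref{eq:res_care_trunc} to recover the geometric decay of the Riccati residuals. Iterating the bound yields
\[
\norm{\mathcal R(\widetilde X_k)}_2\le (1-\zeta)^{k}\norm{\mathcal R(\widetilde X_0)}_2,
\]
so $\norm{\mathcal R(\widetilde X_k)}_2\to 0$ (the relevant case being $\zeta>0$; otherwise the choice $s_k=n$ reduces to the situation already covered by Corollary~\ref{thm:conv_INK}, where the contraction comes from the line search and $\lambda_k\ge\bar\lambda$). With Step~1 keeping $\widetilde X_k$ in the cone of positive semidefinite stabilizing matrices and with this residual decay, the tail of the argument in \cite[Theorem 10]{benner2016inexact}---the same one underlying Corollary~\ref{thm:conv_INK}---applies verbatim: the sequence $\{\widetilde X_k\}_k$ is bounded in the L\"owner cone, its accumulation points solve $\mathcal R(X)=0$, and uniqueness of the stabilizing positive definite solution forces $\widetilde X_k\to X^*$.

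The only non-mechanical point in the plan is the passage from the 2-norm control on $R_{k+1}$ provided by \eqref{eq:res_lyap_trunc} to the L\"owner control $R_{k+1}\le Q$ required by \cite[Theorem 4.3]{feitzinger2009inexact}. This is resolved by propagating the symmetry of $R_{k+1}$ along Algorithm~\ref{alg_NK} as described above; once this is settled, the rest reduces to a mechanical verification that the hypotheses of the untruncated case remain in force under truncation.
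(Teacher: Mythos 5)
Your proposal is correct and follows essentially the same route as the paper, which justifies this corollary only through the discussion preceding its statement: conditions \eqref{eq:res_care_trunc} and \eqref{eq:res_lyap_trunc} are designed exactly to preserve the monotone decrease of the Riccati residuals and the stabilizing/positive-semidefinite property of the iterates, after which \cite[Theorem 10]{benner2016inexact} applies as in Corollary~\ref{thm:conv_INK}. You in fact supply details the paper leaves implicit (the symmetry propagation through $\mathcal T_{s_k}$ and the upgrade of the bound $\norm{R_{k+1}}_2\le\lambda_{\min}(Q)$ to the L\"owner inequality $R_{k+1}\le Q$ needed for \cite[Theorem 4.3]{feitzinger2009inexact}), mirroring the argument already used in the proof of Theorem~\ref{thm:GMRESstable}.
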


\subsubsection{Details on the implementation of \protect{\textsc{tink}}}
In this section we comment on some aspects of our implementation of \textsc{tink}. First, the algorithm is based on the choice of an initial guess $\tilde{X}_0$ symmetric and positive definite such that $\tilde{A}_{cl,0}$ is stable. In our tests either the matrix $A$ is inherently stable,  or the initial point $\tilde{X}_0 = c I$ results in a stable closed loop matrix $\tilde{A}_{cl,0}$. Here, $c\ge 0$  is selected to minimize the Riccati residual while still ensuring the stabilization of the closed-loop matrix. Another minimization is performed in the line search procedure, where we minimize a quartic polynomial \cite[equation 3.11]{benner2016inexact}. Both tasks are performed using the Matlab function \texttt{fminbnd} within the interval $(0,1)$.

The pseudocode in Algorithm~\ref{alg_NK} \upd{has} nice convergence properties when the assumptions of Theorem~\ref{thm:conv_INK} are satisfied; however, to make the algorithm competitive with state-of-the-art methods we need to combine it with some approximations and greedy strategies. We point out the differences between Algorithm~\ref{alg_NK} and our actual implementation in the following list:

\begin{itemize}
	\item[$(i)$] We do not prescribe a priori the number of GMRES iterations according to \eqref{n_iter} but we run it for at least $5$ iterations, and then  we stop when the Lyapunov residual satisfies
	$\norm{R_k }_2 \le \lambda_{\upd{\min}}(Q).
	$ 
	\item[$(ii)$] The computation of the residual norms $\norm{ \mathcal{R}(X_k) }_2$ and $\norm{R_k }_2$ is replaced with the evaluation of probabilistic upper bounds. More explicitly, we generate $10$ independent standard Gaussian vectors 
	$\{\omega^{i}\}_{i=1}^{10}$, and we compute the right-hand sides of the inequalities:
	\begin{equation*}
		\begin{split}
		\norm{ \mathcal{R}(X_k) }_2 &\le 2 \sqrt{\frac{2}{\pi}} \max_{i=1,\ldots,10} \norm{ (A^\top X_k + X_k A - X_kF X_k + Q) \omega^{i} }_2,\\		
		\norm{R_k }_2 &\le 2 \sqrt{\frac{2}{\pi}} \max_{i=1,\ldots,10} \norm{ (\tilde{A}_{cl,k-1}^\top  \tilde{X}_{k} + \tilde{X}_{k} \tilde{A}_{cl,k-1} +   \tilde{X}_{k-1} F \tilde{X}_{k-1}+Q) \omega^{i} }_2.
		\end{split}
	\end{equation*}
	The latter hold true with probability at least $1-2^{-10}$, see \cite{halko2011finding}.
	\item[$(iii)$] In each Newton iteration the truncation parameter $s_k$ is chosen with a greedy strategy. The process begins with the small initial value $s_k=8$. Then, we verify whether conditions \eqref{eq:res_care_trunc} and \eqref{eq:res_lyap_trunc} are satisfied, by means of the random estimators at point $(ii)$. In the affirmative case, the new iterate $\tilde{X}_{k+1}$ is selected as the truncated one. Otherwise, we set $s_k\gets s_k+5$ and we repeat the procedure until conditions \eqref{eq:res_care_trunc} and \eqref{eq:res_lyap_trunc} are satisfied. 
	\item[$(iv)$] We empirically found that it is convenient to perform line search only at the first iteration, see also the numerical results in Section~\ref{sec:linesearch}. If not stated otherwise, we implicitly assume that, in \textsc{tink}, line search is not executed apart from the first iteration.
\end{itemize} 
Finally, to estimate the typical cost of \textsc{tink} we suppose that:
\begin{itemize}
	\item finding/estimating $\lambda_{\min}(Q)$ costs at most $\mathcal O(n)$,
	\item the number of NK iterations is $k_{\max}$,
	\item the bandwidth of the matrices generated during the execution never exceeds $b\in\mathbb N$.
\end{itemize}
Since each iteration of GMRES increases the bandwidth of the iterate, under the above assumptions we have that each call of GMRES runs for at most $b$ iterations and, in particular, its cost is at most $\mathcal O(n b^2)$. The latter is determined by the $\mathcal O(b)$ matrix-matrix multiplications and the orthogonalization phase of the Arnoldi process. The dominant cost in evaluating the residual norms and applying the truncation operator as described at $(ii)$, and $(iii)$ is given, in the worst case scenario, by evaluating $10$ matrix-vector products with a banded matrix of bandwidth $b$ for $\mathcal O(b)$ times; this also costs $\mathcal O(nb^2)$. Therefore, the overall cost of \textsc{tink} is $\mathcal O(nb^2k_{\max})$.



\subsection{Numerical tests}\label{sec:tink-tests}
Let us showcase the features of \textsc{tink} by means of various numerical tests with different settings.
Throughout this section we denote by $A_0$ the $n\times n$ tridiagonal matrix with $-2$ on the main diagonal and $1$ on its sub and superdiagonal.


\subsubsection{The role of the line search method and of the truncation strategy}\label{sec:linesearch}
In this experiment, we aim to examine the impact of incorporating the truncation strategy and the Line Search algorithm into the proposed method. We fix $n = 2000$, and set $A = A_0$,  $Q = \upd{\mathrm{tridiag}}(0.48\cdot e, e, 0.48\cdot e)$, and $F = L L^\top$ with $L= \upd{\mathrm{tridiag}}(0 \cdot e, e, 0.1\cdot e)$, where $e \in \mathbb{R}^n$ is the vector of all ones. Unless stated otherwise truncation is performed in every NK iteration and we select the stopping tolerance $\mathsf{tol} = 10^{-12}$.  Then, we consider four variants of \textsc{tink}: 
\begin{itemize}
\item the line search algorithm is not employed (No-LS),
\item the line search algorithm is applied only at the first iteration (1-LS),
\item the line search algorithm is applied at all the iterations (LS),
\item the line search algorithm is applied only at the first iteration and the truncation is not employed. (1-LS no trunc).
\end{itemize}

\begin{figure}[h!]	
\centering
 \includegraphics[width=0.45\textwidth]{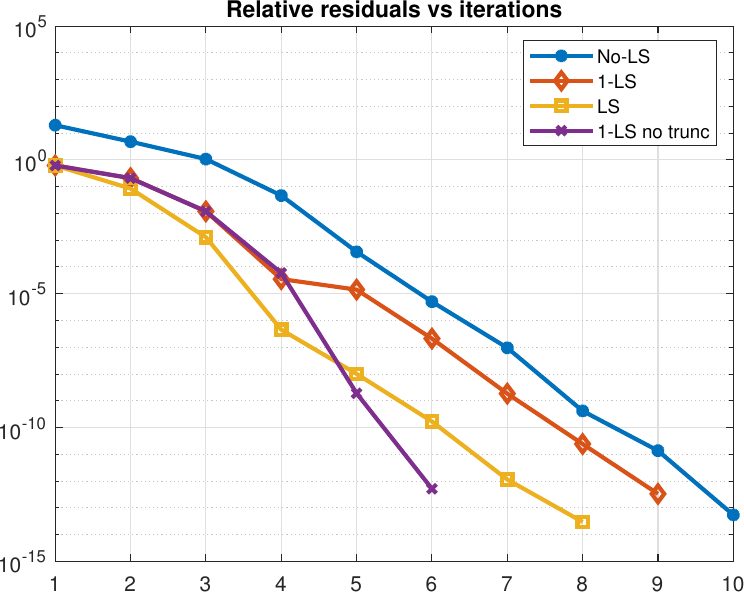}
	\includegraphics[width=0.45\textwidth]{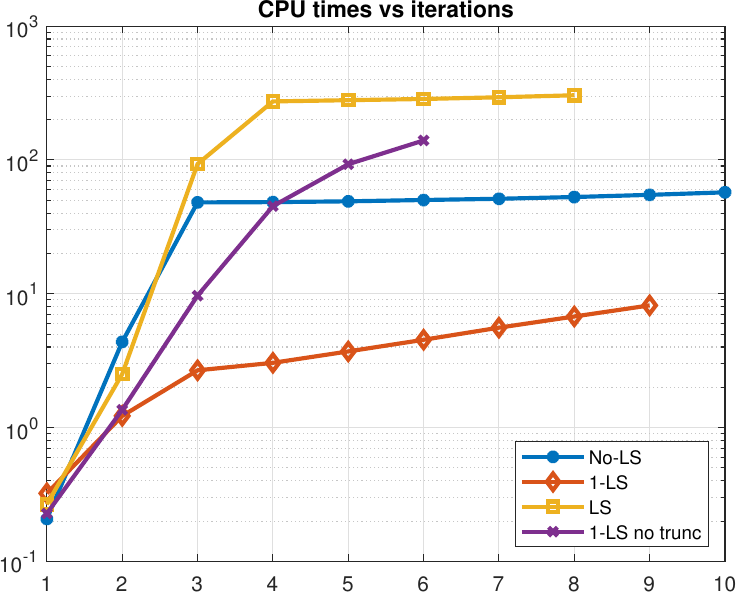}
	\includegraphics[width=0.45\textwidth]{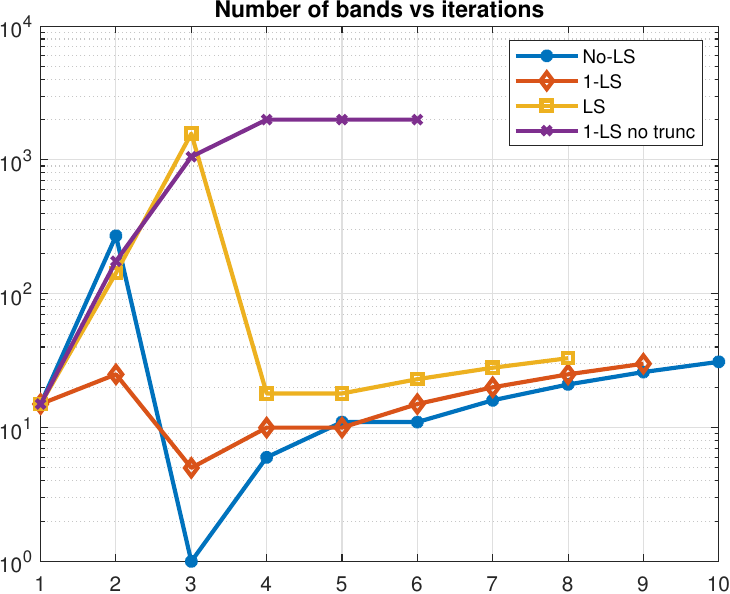}
	\caption{Comparison of the methods in terms of the influence of the line search method and truncation strategy, for $n=2000$.} 
 \label{fig:linesearch}
\end{figure}
The performances of the four methods are displayed in Figure~\ref{fig:linesearch}. The top-left panel shows the beneficial effect of the line search at the first iteration, as it reduces the first residual by an order of magnitude. Among the algorithms that make use of truncation, the LS method exhibits a faster decrease of the residual norm, converging in 8 iterations compared to the 9 and 10 iterations required by 1-LS and No-LS, respectively. However, the LS method necessitates about $400$ seconds to converge, making it slower than the No-LS and 1-LS methods. The fastest method is the 1-LS, which benefits from the initial residual reduction  and the low bandwidth of its iterates. The bottom panel of Figure~\ref{fig:linesearch} indicates that the high computational cost of LS and No-LS is due to a significant increase of the number of bands during the first iterations, whereas the 1-LS always maintains a bandwidth lower than $40$.

Finally, we compare the performances of the truncated and non-truncated versions of 1-LS. During the initial iterations, the residuals are similar, but ultimately, the 1-LS no trunc achieves a faster reduction and converges in 6 iterations. However, this comes at the price of considering much more bands and consuming much more CPU \upd{time}, with the latter being an order of magnitude higher than that of the 1-LS case.

In view of the experimental results of this section, for subsequent numerical tests, we will consistently employ the 1-LS version of the \textsc{tink} algorithm.

\subsubsection{Comparing \textsc{tink} and \textsc{d\&c\_care}}
\label{sec:numer_compar}
We conclude this section by comparing the performances of \textsc{tink} with the divide-and-conquer approach in Algorithm~\ref{alg:dac_care}. We consider a case study where we vary two parameters: the size $n$ of the matrices and the condition number of the quadratic coefficient $F$. More specifically, we set $A=A_0$, $F$ diagonal with entries logarithmically spaced in $\left[\frac{1}{\sqrt{\kappa(F)}}, \sqrt{\kappa(F)}\right]$, and $Q=\mathrm{tridiag}(0.1\cdot e, e, 0.1\cdot e)$. Then, we solve the CARE with the two algorithms for $n= 500\cdot 2^h$, $h=0,\dots, 4$, and $\kappa(F)\in\{1, 10, 100, 1000\}$. In the case $\kappa(F)=1$, i.e. $F=I$, the \textsc{tink} algorithm uses the conjugate gradient method in place of GMRES.

In Table~\ref{tab:nk},  we report the timings and either the bandwidth or the HSS rank of the computed approximate solution. For each choice of $n$, and $\kappa(F)$, we mark in bold the best CPU time among the two algorithms.  In all cases, and for both algorithms, we get relative residuals of the order of $10^{-10}$, so we did not report this data. Note that, the bandwidth of the computed solution grows as $\kappa(F)$ increases; although this growth appears not dramatic on the returned solution, we highlight that the intermediate quantities generated within the Newton-Kleinman iterations have significantly larger bandwidths. This causes the anomalous increase of the CPU time of \textsc{tink} in the case $n=8000$, $\kappa(F)=1000$. Empirically, we have observed that forcing the number of bandwidths to not exceed a prescribed amount (e.g., $150$) in all the intermediate quantities ruins the convergence rate of the Newton's iteration, resulting in even higher computational times. On the opposite, the HSS rank of the approximate solution and the convergence of EKSM employed in Algorithm~\ref{alg:dac_care} are not significantly affected by $\kappa(F)$. As a result, the timings of  Algorithm~\ref{alg:dac_care} are very similar when fixing $n$. By looking at the numerical results in Table~\ref{tab:nk}, we conclude that \textsc{tink} can compete with \textsc{d\&c\_care} for small values of $\kappa(F)$, say $\kappa(F)\le 100$. For poorly conditioned quadratic coefficients, the structure appears to be present in the solution but is less evident, or even absent, in the intermediate terms produced by the Newton-Kleinman scheme. In the latter case one might look for alternative sparsity preserving strategies to limit the bandwidth of intermediate quantities while keeping under control the number of iterations of the method. 

\upd{For completeness, we also assessed the performance of the classical Newton–Kleinman method implemented with the standard MATLAB routine \texttt{lyap} for solving the Lyapunov equations at each iteration. The computational cost of this approach grows cubically with the problem dimension, as expected for dense Lyapunov solvers. In particular,  for large-scale instances the method becomes significantly more expensive than both \textsc{tink} and \textsc{d\&c\_care}. For example, to obtain comparable values of relative residual, when $n=8000$ and $\kappa(F)=1000$, the Newton–Kleinman method based on \texttt{lyap} requires more than $10^3$ seconds, whereas the divide-and-conquer strategy completes in approximately $22$ seconds, resulting in a speed-up of of nearly a factor of $50$. The break-even point with our proposed methods is attained around $n=1000$. 
}

\begin{filecontents}[overwrite]{test_tink_vs_dac.dat}
500     0.37712 0.62268 1.1467  3.4096  1.4859e-10      1.6987e-10      1.4249e-10      1.8841e-10      25      30      40      50      1.5671  1.1313  1.1531  1.1745  2.1244e-10      2.1618e-10    2.2736e-10      2.4179e-10      7       7       7       7  
1000    0.74097 1.2216  2.4289  3.5534  1.5057e-10      1.7797e-10      1.8708e-10      3.0893e-10      25      30      40      50      2.521   2.3279  2.4211  2.3818  2.6014e-10      1.6139e-10   1.6104e-10      1.7095e-10      14      14      14      14
2000    1.6261  2.414   4.6371  8.0969  1.5155e-10      1.8129e-10      2.0159e-10      5.0223e-10      25      30      40      50      4.8554  4.9127  4.9394  4.9736  2.8096e-10      2.189e-10    1.2989e-10      1.5629e-10      14      14      14      14   
4000    2.7471  5.6218  10.77   22.129  1.5204e-10      1.8293e-10      2.0638e-10      1.9228e-10      25      30      40      55      9.7042  10.018  10.574  10.343  2.9081e-10      1.9743e-10  1.6907e-10      1.712e-10       14      14      14      14    
8000    5.6205  13.58   25.558  157.79  1.5228e-10      1.8375e-10      2.0827e-10      1.9834e-10      25      30      40      55      20.752  21.582  22.02   22.173  2.9561e-10      1.8787e-10  1.5654e-10      1.7486e-10      14      14      14      14   
\end{filecontents}

\begin{table}[h!]
	\centering
	\caption{Comparison of the performances of  Algorithm~\ref{alg:dac_care}, and Algorithm~\ref{alg_NK}, when solving the  CAREs considered in Section~\ref{sec:numer_compar}}\label{tab:nk}
	\vspace{.3cm}
	
	\pgfplotstabletypeset[
	every head row/.style={
		before row={
			\toprule
			\multicolumn{9}{c}{\textbf{Performances of \textsc{tink} (Algorithm~\ref{alg_NK})}}\\
			\hline			
			\multicolumn{1}{c|}{}&
			\multicolumn{2}{c|}{$\kappa(F)=1$}&
			\multicolumn{2}{c|}{ $\kappa(F)=10$}&
			\multicolumn{2}{c|}{$\kappa(F)=100$}&
			\multicolumn{2}{c}{$\kappa(F)=1000$}\\
		},
		after row = \midrule,
	},
	columns = {0,1,9,2,10,3,11,4,12}, 
	columns/0/.style = {column name = $n$, column type=c|},
	columns/1/.style = {column name = Time,precision=1,zerofill, fixed, postproc cell content/.style={
					@cell content/.add={$\bf}{$}
				}},
	columns/9/.style = {column name = bandwidth, column type=c|},
	columns/2/.style = {column name = Time,precision=1,zerofill, fixed, postproc cell content/.style={
			@cell content/.add={$\bf}{$}
	}},
columns/10/.style = {column name = bandwidth, column type=c|},
	columns/3/.style = {column name = Time,precision=1,zerofill, fixed},
columns/11/.style = {column name = bandwidth, column type=c|},
	columns/4/.style = {column name = Time,precision=1,zerofill, fixed},
columns/12/.style = {column name = bandwidth, column type=c},
every row 0 column 5/.style={
	postproc cell content/.style={
		@cell content/.add={$\bf}{$}
	}
},
every row 1 column 5/.style={
	postproc cell content/.style={
		@cell content/.add={$\bf}{$}
	}
},
every row 2 column 5/.style={
	postproc cell content/.style={
		@cell content/.add={$\bf}{$}
	}
}
	]{test_tink_vs_dac.dat}\vspace{.3cm}
	
		\pgfplotstabletypeset[
	every head row/.style={
		before row={
			\toprule
			\multicolumn{9}{c}{\textbf{Performances of \textsc{d\&c\_care} (Algorithm~\ref{alg:dac_care})}}\\
			\hline			
			\multicolumn{1}{c|}{}&
			\multicolumn{2}{c|}{$\kappa(F)=1$}&
			\multicolumn{2}{c|}{ $\kappa(F)=10$}&
			\multicolumn{2}{c|}{$\kappa(F)=100$}&
			\multicolumn{2}{c}{$\kappa(F)=1000$}\\
		},
		after row = \midrule,
	},
	columns = {0,13,21,14,22,15,23,16,24}, 
	columns/0/.style = {column name = $n$, column type=c|},
	columns/13/.style = {column name = Time,precision=1,zerofill, fixed},
	columns/21/.style = {column name = HSS rank, column type=c|},
	columns/14/.style = {column name = Time,precision=1,zerofill, fixed},
	columns/22/.style = {column name = HSS rank, column type=c|},
	columns/15/.style = {column name = Time,precision=1,zerofill, fixed},
	columns/23/.style = {column name = HSS rank, column type=c|},
	columns/16/.style = {column name = Time,precision=1,zerofill, fixed, postproc cell content/.style={
			@cell content/.add={$\bf}{$}
	}},
	columns/24/.style = {column name = HSS rank, column type=c},
	every row 1 column 5/.style={
		postproc cell content/.style={
			@cell content/.add={$\bf}{$}
		}
	},
	every row 3 column 5/.style={
		postproc cell content/.style={
			@cell content/.add={$\bf}{$}
		}
	},
	every row 4 column 5/.style={
		postproc cell content/.style={
			@cell content/.add={$\bf}{$}
		}}
	]{test_tink_vs_dac.dat}	
\end{table}

  \section{Numerical tests on feedback control applications}
\label{sec:feedback_appl}
This section illustrates the application of the proposed algorithms to the optimal control problems introduced in Section~\ref{sec:intro-appl}. In the first numerical experiment, we address the optimal control of a nonlinear PDE, specifically the Allen-Cahn equation. In the second experiment, we focus on the control of a system of interacting particles governed by the Cucker-Smale model.

\subsection{Allen-Cahn equation}\label{sec:allehn-cahn-1d}
Let us consider the following nonlinear Allen-Cahn PDE with homogeneous Neumann boundary conditions:
\begin{equation}
\left\{ \begin{array}{l}
\partial_t y(t,x) = \sigma \partial_{xx} y(t,x) +  y(t,x) (1-y(t,x)^2) + u(t,x),  \\
 y(0,x)=y_0(x),
\end{array} \right.
\label{AC}
\end{equation}
with $x \in [-L,L]$ and $t \in (0,+\infty)$ and the following cost functional
$$
\tilde{J}_{y_0}(u) = \int_0^{\infty} dt  \int_{-L}^L dx \;(|y(t,x)|^2 +  \tilde{\gamma} |u(t,x)|^2).
$$
Approximating the PDE by finite difference schemes with stepsize $\Delta x$, we obtain the ODEs system in form 
$$
\dot{y}(s)=A(y) y(s) + Bu(s),
$$
and the discretized cost functional reads
$$
J_{y_0}(u) = \int_0^{\infty}  \Delta x \, y(t)^\top y(t) +  \gamma  \, u(t)^\top u(t) \, dt \,,
$$

with
$$
A(y) = \sigma A^{\Delta x}_0 + I_n - diag(y \odot y),\quad y \in \mathbb{R}^n, \quad B = I_n ,
$$
where $\odot$ refers to the Hadamard product, $I_n \in \mathbb{R}^{n \times n}$ is the identity matrix, $\gamma = \tilde{\gamma} \Delta x$ and $A^{\Delta x}_0$ is the tridiagonal matrix arising from the discretization of the Laplacian with Neumann boundary conditions with stepsize $\Delta x$.
We fix $\tilde{\gamma} = 0.1$, $\sigma = 10^{-3}$ and $y_0 = [\sin(\pi x_i)]_{i=1}^n$, with $x_i =-L+ (i-1) \Delta x$. 

The feedback control \eqref{control_sdre_feed} is derived through solving the SDREs~\eqref{sdre}, as described in Section \ref{sec:intro-appl}, during the time integration of the dynamical system. Specifically, the system is integrated using the Matlab function \texttt{ode15s}, which is designed for stiff differential equations.

For this numerical test, we employ the \textsc{tink} algorithm to carry on the SDRE approach.
Since the closed loop matrix $A(x)-\gamma^{-1}X(x)$ is symmetric for all $x\in \mathbb{R}^n$, the linear system \eqref{linear_system_lyap} arising from the the Newton-Kleinman iterations is solved with the Conjugate Gradient method.

Note that, the exact solution of the SDRE \eqref{sdre} for a given $x \in \mathbb{R}^d$ is given by formula \eqref{eq:caresol} with a rescaling term: 
\begin{equation}
	X(x)= \gamma \left(\sqrt{A(x)^2+\gamma^{-1} Q}+A(x)\right).
	\label{sdre_exact}
\end{equation}
So, as competitor, we consider the evaluation of formula \eqref{sdre_exact} in dense arithmetic, where the matrix square root is computed using the Matlab function \texttt{sqrtm}.

In Table \ref{table_AC}, we compare the performances of the \textsc{tink} algorithm with the use of the exact formula \eqref{sdre_exact} labeled with "\texttt{sqrtm}". The term "CPU per control" denotes the average time required to compute a feedback control as described by \eqref{control_sdre_feed}, specifically for the resolution of an SDRE. The term "Average bandwidth" refers to the average number of diagonals in the approximate solution of the SDREs.

The \textsc{tink} algorithm exhibits an increasing gain with respect to \texttt{sqrtm} as $n$ rises, achieving a speed-up factor of nearly 16 when $n=2000$; this is due to the exceptionally low average bandwidth. Concurrently,  we get the same total costs up to the reported digits. 

Finally, in Figure \ref{fig:AC}, we present the time evolution of the uncontrolled dynamics (left panel) and the controlled dynamics using the SDRE solved with \textsc{tink} (right panel) for $n=500$. Observe that the uncontrolled solution converges to a stable equilibrium, with values of 1
 when the initial condition is positive and -1 otherwise. Conversely, the SDRE control successfully drives the solution to the unstable equilibrium $\overline{y}=0$ within the initial time instances. The solution controlled via the exact formula closely resembles the one depicted in the left panel of Figure \ref{fig:AC} and is therefore not shown.

\begin{table}[hbht]
\centering
\begin{tabular}{c|ccc|cc}    

        & \multicolumn{3}{c|}{\textsc{tink}}   & \multicolumn{2}{c}{\texttt{sqrtm}} \\
$n$   & CPU per control & Total cost & Average bandwidth  & CPU per control & Total cost
\\\hline
500    & 0.0108 &   10.386514 & 3  &  0.0261  &   10.386514\\
1000 &  0.0234 &  10.386514 & 3 &  0.1855 & 10.386514\\
2000 &   0.0807 &   10.386514 & 8 &   1.3373 &  10.386514\\
 \end{tabular}
  \caption{Comparison of the performances in the computation of the optimal control via \textsc{tink} algorithm and via the exact formula \eqref{sdre_exact} for different dimensions $n$.}
 \label{table_AC}
\end{table}

\begin{figure}[htbp]	
\centering
 \includegraphics[width=0.45\textwidth]{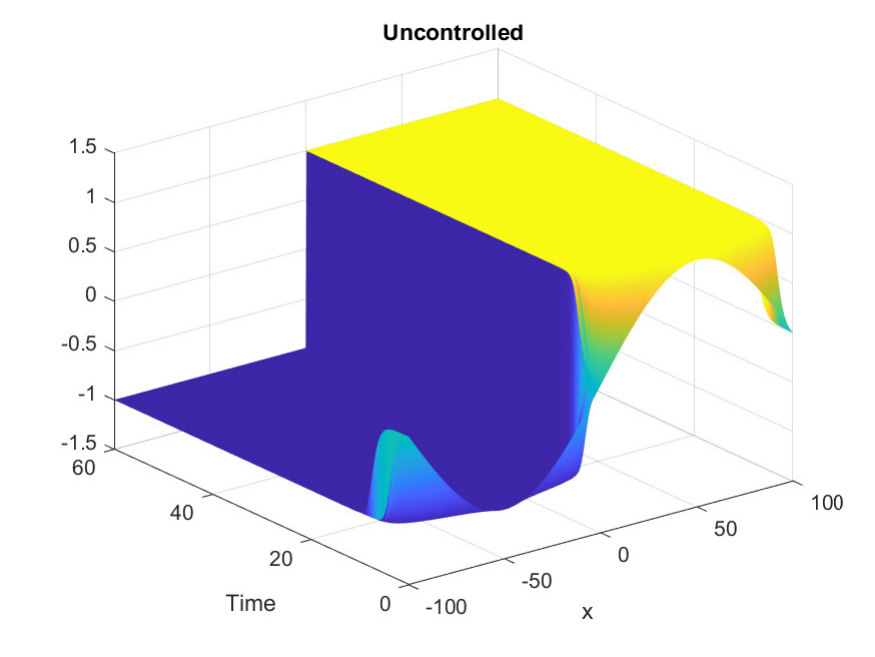}
	\includegraphics[width=0.45\textwidth]{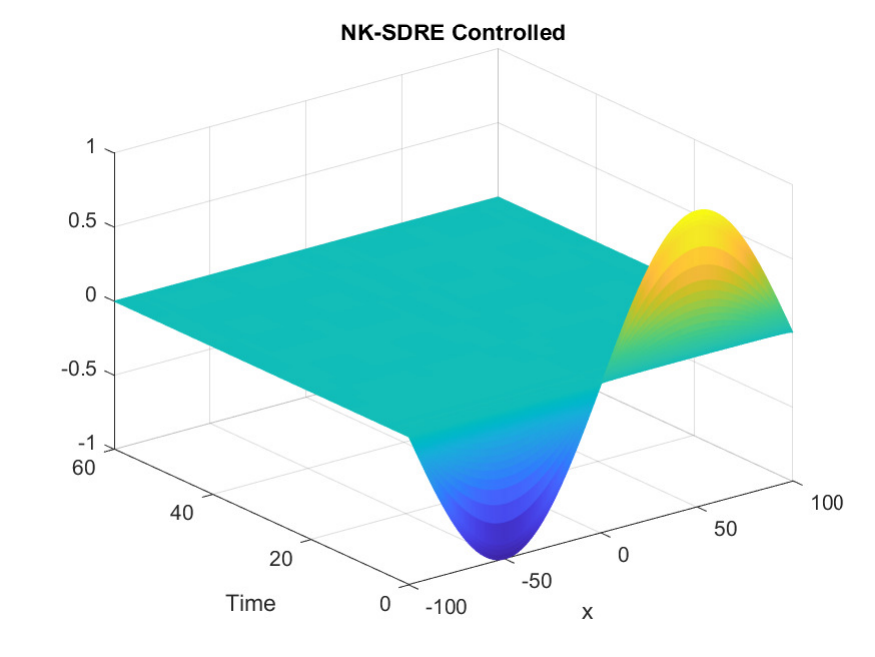}
	\caption{Uncontrolled solution (left) and controlled solution via SDRE solved with \textsc{tink} (right) with $n=500$.} 
 \label{fig:AC}
\end{figure}

\subsection{Cucker-Smale model}
Given $N_a$ interacting agents,
we consider the dynamical system governed by the Cucker-Smale model as follows:
\begin{equation*}
\begin{bmatrix} \dot{y} \\ \dot{v}  \end{bmatrix} = A(y)\begin{bmatrix} y \\ v  \end{bmatrix}+Bu= \begin{bmatrix} 0_{N_a} & I_{N_a} \\ 0_{N_a} & \mathcal{A}_{N_a}(y)  \end{bmatrix} \begin{bmatrix} y \\ v  \end{bmatrix} + \begin{bmatrix} O_{N_a} \\ I_{N_a}  \end{bmatrix} u,
\end{equation*}
with the interaction matrix defined by
$$
\left[ \mathcal{A}_{N_a}(y) \right]_{i,j}= \begin{cases} -\frac{1}{N_a} \sum_{k \neq i} K(y_i,y_k) & if \;i=j, \\
\frac{1}{N_a} K(y_i,y_j) & otherwise,
						 				 \end{cases}
$$
where
$$
K(y_i,y_j)=\frac{1}{1+| y_i-y_j |^2} \,.
$$
Let $y_v = [y; v] \in \mathbb{R}^n$, with $n=2 N_a$.
Our objective is to drive all positions and velocities to zero, while minimizing the following cost functional:
$$
J(y(\cdot),v(\cdot),u(\cdot))=  \int_0^\infty  y(s)^\top Q_{N_a} y(s) + v(s)^\top  Q_{N_a} v(s)  +  u(s)^\top R u(s)\, ds,
$$
where $Q_{N_a}=R=\frac{1}{N_a} I_{N_a} \in \mathbb{R}^{N_a \times N_a}$.
Due to the block structure of the problem, we consider the solution of the SDRE \eqref{sdre} in block form, $i.e.$,
$$
X(y_v) = \begin{bmatrix} X_{1,1}(y_v) & X_{1,2}(y_v) \\ X_{2,1}(y_v) & X_{2,2}(y_v) \end{bmatrix}.
$$
The SDRE is then equivalent to solving the following system of matrix equations:
$$
\begin{cases}
-X_{1,2}(y_v) R^{-1} X_{2,1}(y_v) + Q_{N_a}=0_{N_a}, \\
X_{1,1}(y_v)+\mathcal{A}_{N_a}(y) X_{1,2}(y_v) - X_{1,2}(y_v) R^{-1} X_{2,2}(y_v)=0_{N_a},\\
X_{1,2}(y_v)+X_{2,1}(y_v)+\mathcal{A}_{N_a}(y) X_{2,2}(y_v) + X_{1,2}(y_v) \mathcal{A}_{N_a}(y_v) - X_{2,2}(y_v) R^{-1} X_{2,2}(y_v)+ Q_{N_a}=0_{N_a},\\
X_{1,2}(y_v) = X_{2,1}(y_v).
\end{cases}
$$
We immediately observe that since $R$ is a constant identity matrix, the solution to the first equation is
\begin{equation*}
X_{1,2}(y_v) = X_{2,1}(y_v) = \sqrt{R Q_{N_a}} = \frac{1}{N_a} I_{N_a}.
\end{equation*}
Substituting this solution into the third equation yields the following SDRE for the block $X_{2,2}(y_v)$:
\begin{equation}
\mathcal{A}_{N_a}(y) X_{2,2}(y_v) + X_{2,2}(y_v) \mathcal{A}_{N_a}(y) - X_{2,2}(y_v) R^{-1} X_{2,2}(y_v)+ \frac{3}{N_a} I_{N_a}= 0_{N_a}.
\label{sdre_na}
\end{equation}
For the purpose of implementing the feedback control law \eqref{control_sdre_feed}, we note that the computation of the block $X_{1,1}(y_v)$ is unnecessary, since the control is given by
$$
u(y_v) = -R^{-1} B^{\top} X(y_v)y_v=-y-R^{-1}X_{2,2}(y_v)v.
$$
The optimal control problem is ultimately reduced to approximating the SDRE \eqref{sdre_na}, where the matrix $\mathcal{A}_{N_a}(y)$ exhibits a quasiseparable structure, and both the constant term and the quadratic coefficient are given by constant identity matrices.
To ensure an off-diagonal decay of the matrix $\mathcal{A}_{N_a}$, we first sort the position variables. Subsequently, we solve the corresponding SDRE \eqref{sdre_na} and compute the associated feedback law. The control vector is then reordered to match the original configuration of the positions.

For each test, we consider a random initial condition $y_v(0)\in \mathbb{R}^n$, where each entry lies within the interval $[0,1]$. In Table \ref{table_cucker}, we compare the performance of the \textsc{d\&c\_care} algorithm both with and without the inclusion of the aforementioned sorting procedure. In line with the previous numerical test, the term "CPU per control" refers to the average time required to compute a feedback control, specifically for solving the SDRE. Meanwhile, "Average HSS rank" denotes the mean HSS rank of the SDRE solutions throughout the integration process. We observe that incorporating the sorting procedure results in a reduction of the average HSS rank, which consequently decreases CPU time. This effect is illustrated more clearly in Figure \ref{fig:CS_hssrank}, where the HSS ranks of the solution to the SDRE at different time instances are depicted. It is noteworthy that the sorted technique begins with a rank of 3, which decreases to 2 after time 5. In contrast, the absence of the initial sorting leads to a rank of 6, which diminishes over the course of time integration. Finally, we emphasize that the CPU time per control scales linearly, making this approach both efficient and scalable for solving high-dimensional problems.

With $n=500$ fixed, Figure \ref{fig:CS1} illustrates the evolution of the position variables (left panel) and velocity variables (right panel) for the uncontrolled dynamics. It is observed that the velocities reach consensus, converging to a value close to the average of the initial velocities (approximately 0.5325). Due to the presence of positive velocities, the positions increase over time. Conversely, Figure  \ref{fig:CS2} depicts the behaviour of the position and velocity variables under the controlled dynamics. Here, both variables converge to zero, providing a close approximation of the optimal control solution.

\begin{table}[hbht]
\centering
\begin{tabular}{c|cc|cc}    

        & \multicolumn{2}{c|}{\textsc{d\&c\_care} with sorting} & \multicolumn{2}{c}{\textsc{d\&c\_care} without sorting} \\
$n$   & CPU per control &  Average HSS rank  & CPU per control & Average HSS rank
\\\hline
500    & 0.1678 &     2.54  & 0.1704  & 3.94   \\
1000 & 0.4172    & 2.55  &   0.4587 &  3.96 \\
2000 & 0.7440    & 2.53  &  0.9812  & 3.95 \\ 
 \end{tabular}
  \caption{Performances in the computation of the optimal control via \textsc{d\&c\_care} algorithm with and without ordering the position variables for different dimensions $n$.}
 \label{table_cucker}
\end{table}

\begin{figure}[htbp]	
\centering
 \includegraphics[width=0.45\textwidth]{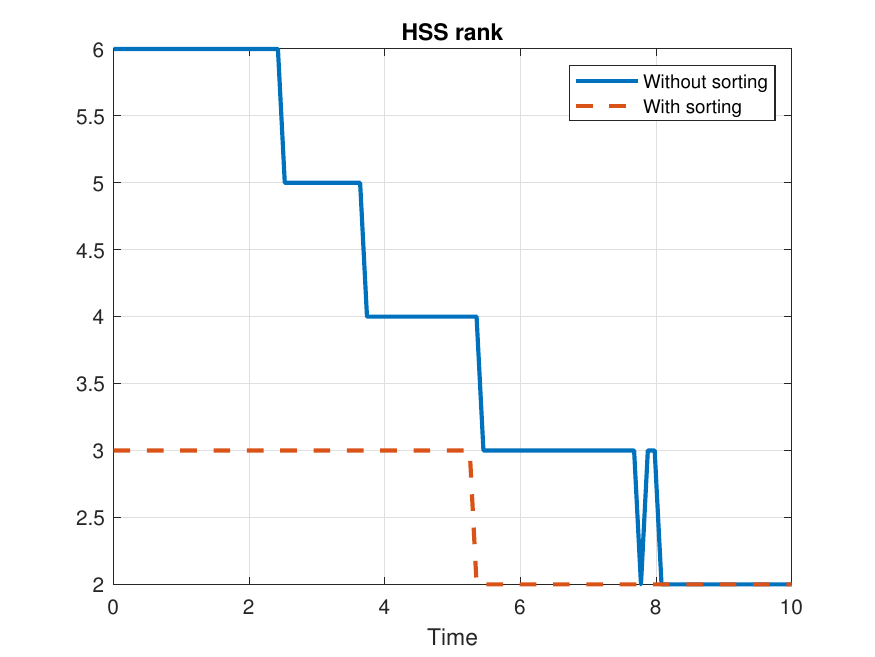}
	\caption{The HSS ranks of the solution to the SDRE \eqref{sdre_na} at various time instances for both the sorted and unsorted algorithms with $n=2000$.} 
 \label{fig:CS_hssrank}
\end{figure}

\begin{figure}[h]	
\centering
 \includegraphics[width=0.45\textwidth]{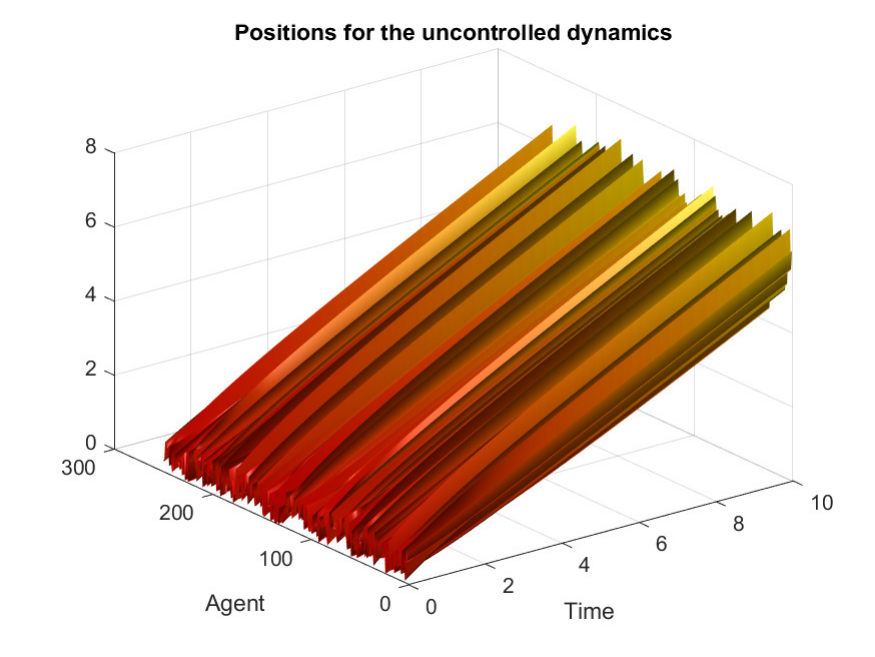}
	\includegraphics[width=0.45\textwidth]{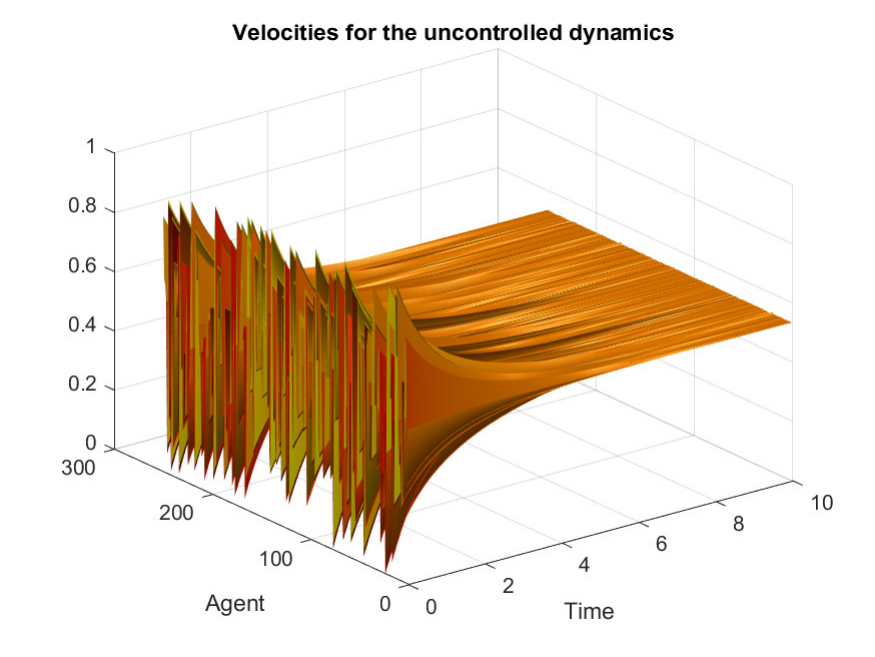}
	\caption{Uncontrolled solution for the positions (left) and for the velocity (right) with $n=500$.} 
 \label{fig:CS1}
\end{figure}

\begin{figure}[h]	
\centering
 \includegraphics[width=0.45\textwidth]{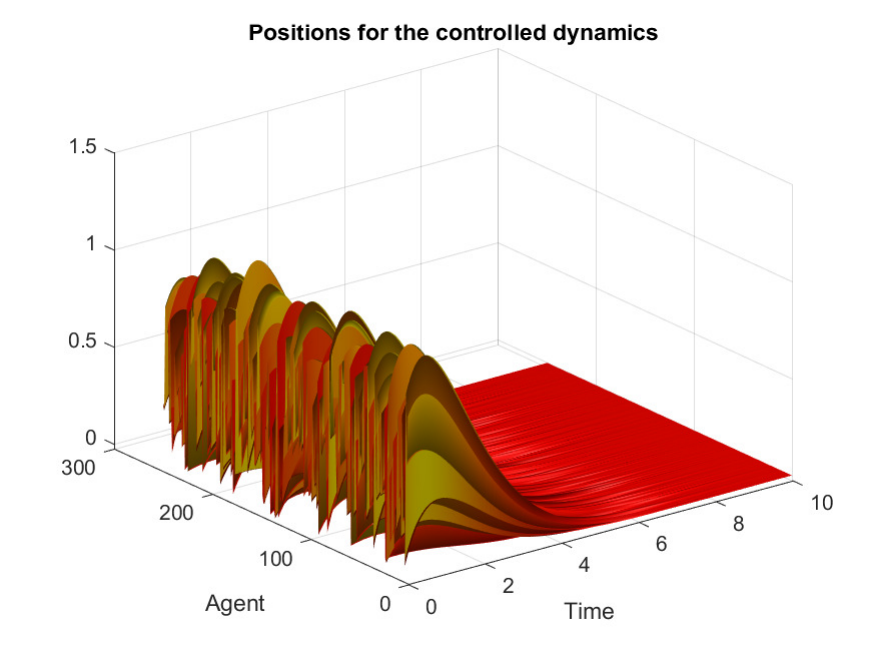}
	\includegraphics[width=0.45\textwidth]{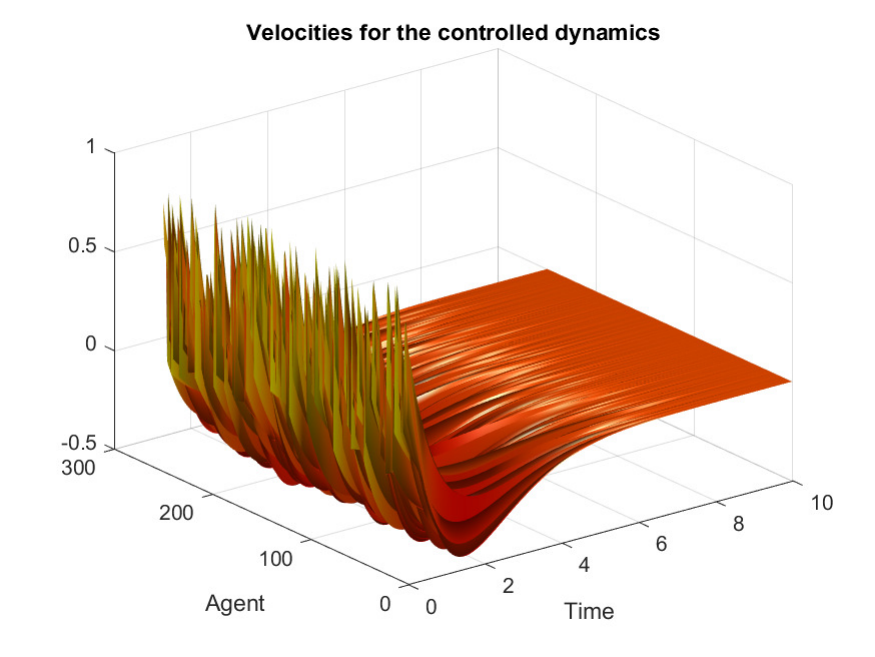}
	\caption{Controlled solution for the positions (left) and for the velocity (right) via SDRE solved with \textsc{d\&c\_care} with $n=500$.} 
 \label{fig:CS2}
\end{figure}
\upd{
\subsection{Some numerical tests in 2D}
The techniques developed in this manuscript can be exploited to mitigate the curse of dimensionality in certain 2D control problems. We illustrate this for operators with a Laplace-like sparsity pattern. In this setting, neither the matrix coefficients nor the solution of \eqref{eq:care} are (numerically) quasiseparable; however, it is still possible to provide a structured representation of the solution and to design numerical procedures that are faster than dense solvers. The key idea is to use well-established graph-based reorderings to reveal subproblems with either low-rank or quasiseparable solutions.

As a working example, consider the case where 
\begin{equation}\label{eq:2d-laplace}
	A=(n+1)^2\left(I_n\otimes \mathrm{tridiag}(1,-2,1) + \mathrm{tridiag}(1,-2,1)\otimes I_n\right) 
\end{equation}
is the discretization of the 2D Laplace operator with Dirichlet boundary conditions, $F$ and $Q$ are diagonal, and the parameter $n$ is such that operating with dense $n\times n$ matrices is affordable, while forming dense $n^2\times n^2$ matrices must be avoided. By applying a nested dissection ordering with a separator set of cardinality $n$, we obtain an equivalent CARE whose coefficient matrix $A$ has the block structure
$$
A=\begin{bmatrix}
	A_{11}&0&A_{13}\\
	0&A_{22}&A_{23}\\
	A_{31}&A_{32}&A_{33}
\end{bmatrix}\in\mathbb R^{n^2\times n^2},
$$
where $[A_{31}\ A_{32} ] = [A_{13}^\top\ A_{23}^\top ]\in\mathbb R^{n\times(n^2-n)}$; additionally, $Q$ and $F$ remain diagonal. The relatively small size of the separator implies that the reordered matrix is within a moderate ``rank distance'' from a block diagonal form. In the spirit of our divide-and-conquer algorithm, we write $X=X_0+\delta X$, where $X_0$ is the solution of the Riccati equation associated with the block diagonal part of $A$, and $\delta X$ satisfies
$$
(A-X_0F)\delta X+\delta X(A-FX_0)-\delta XF\delta X+\delta AX_0+X_0\delta A=0,
$$
with
$$
\delta A = \begin{bmatrix}
	0&0&A_{13}\\
	0&0&A_{23}\\
	A_{31}&A_{32}&0
\end{bmatrix}.
$$
The matrix $X_0$ is $3\times 3$ block diagonal, and each diagonal block, denoted by $X_{ii}^{(0)}$, is the solution of the Riccati equation $A_{ii}X_{ii}^{(0)}+ X_{ii}^{(0)}A_{ii}-X_{ii}^{(0)}F_{ii}X_{ii}^{(0)}+Q_{ii}=0$. Since the size of the third diagonal block is small, $X_{33}^{(0)}\in\mathbb R^{n\times n}$ is computed via a dense solver. We then apply the reverse Cuthill-McKee reordering to both $A_{11}$ and $A_{22}$ (and their associated CAREs) to make these blocks as banded as possible, before compressing them into the HSS format. Empirically, we observe that this yields off-diagonal blocks of significantly lower rank compared to compressing $A$ directly. 

The HSS representations of the (permuted) $X_{11}^{(0)}$ and $X_{22}^{(0)}$ are then retrieved using Algorithm~\ref{alg:dac_care}. Finally, we compute a low-rank approximation of $\delta X$ via Algorithm~\ref{alg:EKSM}; this requires efficient matrix-vector multiplications and linear system solves with the matrix $A-X_0F$. Matrix-vector multiplications are handled via the relation $(A-X_0F)v=Av - X_0(F v)$, leveraging the sparsity of $A$ and the HSS structure of $X_0$ and $F$. Regarding the linear system, we consider the factorization:
$$
A-X_0F=\begin{bmatrix}
	(A_{11}-X_{11}^{(0)}F_{11})&0&0\\
	0&(A_{22}-X_{22}^{(0)}F_{22})&0\\
	0&0&I_n	
\end{bmatrix}\begin{bmatrix}
	I&0&(A_{11}-X_{11}^{(0)}F_{11})^{-1}A_{13}\\
	0&I&(A_{22}-X_{22}^{(0)}F_{22})^{-1}A_{23}\\
	A_{31}&A_{32}&A_{33}-X_{33}^{(0)}F_{33}
\end{bmatrix}.
$$    
This implies that evaluating $(A-X_0F)^{-1}v$ is equivalent to solving a block diagonal linear system with (permuted) HSS diagonal blocks and a sparse linear system. Moreover, this procedure can be applied recursively to compute $X_{11}^{(0)}$ and $X_{22}^{(0)}$ instead of calling Algorithm~\ref{alg:dac_care} immediately. This reduces both the size and the HSS rank of the CAREs solved by the divide-and-conquer algorithm, which in some cases results in CPU time savings. However, increasing the recursion depth causes the procedure to spend more time on HSS compression of the matrices $A_{ii}-X_{ii}^{(0)}F_{ii}$ and on calls to Algorithm~\ref{alg:EKSM}; thus, the optimal recursive depth $\ell$ is problem-dependent. The complete procedure is detailed in Algorithm~\ref{alg:care_2d}. In practice, the solution is returned in a structured format that keeps both the HSS representation of the first term and the low-rank factorization of the second.
\begin{algorithm}
	\caption{CARE solver in the case where $A$ is Laplace-like sparse}\label{alg:care_2d}
	\begin{algorithmic}[1]
		\Procedure{d\&c\_care\_2D}{$A,F,Q, \ell$}
		\State $A\gets \Pi_{nd}A\Pi_{nd}^\top$, $F\gets\Pi_{nd}F\Pi_{nd}^\top$, $Q\gets\Pi_{nd}Q\Pi_{nd}^\top$\Comment{nested dissection}
		\State $X_3^{(0)}\gets\textsc{dense\_care}(A_{33}, F_{33}, Q_{33})$
		\If{$\ell =0$}
		\State
		$A\gets \Pi_{cm}A\Pi_{cm}^\top$, $F\gets\Pi_{cm}F\Pi_{cm}^\top$, $Q\gets\Pi_{cm}Q\Pi_{cm}^\top$\Comment{reverse Cuthill-McKee}
		 \State$X_{11}^{(0)}\gets\textsc{d\&c\_care}(A_{11},F_{11},Q_{11})$,\quad $X_{22}^{(0)}\gets\textsc{d\&c\_care}(A_{11},F_{22},Q_{22})$  
		\Else
		\State $X_{11}^{(0)}\gets\textsc{d\&c\_care\_2D}(A_{11},F_{11},Q_{11},\ell-1)$
		\State $X_{22}^{(0)}\gets\textsc{d\&c\_care\_2D}(A_{11},F_{22},Q_{22},\ell-1)$ 
		\EndIf
		\State Compress $A_{ii}-X_{ii}^{(0)}F_{ii}$ in the HSS format for $i=1,2$
		\State Find a low-rank factorization $\delta AX_0+X_0\delta A= UDU^\top$
		\State $\delta X\gets\textsc{low\_rank\_care}(A-X_0F,F, U, D)$
		\If{$\ell =0$}
		\State \Return $\Pi_{cm}^\top\Pi_{nd}^\top\left( X_0+ \delta X\right)\Pi_{nd}\Pi_{cm}$
		\Else
		\State \Return $\Pi_{nd}^\top\left( X_0+ \delta X\right)\Pi_{nd}$
		\EndIf
		\EndProcedure
	\end{algorithmic}
\end{algorithm}

\subsubsection{Testing the Riccati solver}\label{sec:2d-laplace}
Let us consider solving the CARE with $A$ given as in \eqref{eq:2d-laplace}, and $F=Q=I_{n^2}$. We test the performance of Algorithm~\ref{alg:care_2d} for $n\in\{100,150,200\}$, and recursion depth $\ell\in\{0,1,2\}$. In this test, we set $n_{\min}=500$ and the tolerance for the compression of the low-rank blocks to $10^{-10}$; moreover, Algorithm~\ref{alg:EKSM} is stopped when the relative residual of the approximant is below $10^{-5}$. For $n\in \{100,150\}$ the residual of the computed solution is of the order of $10^{-6}$, while for $n=200$ it is of the order of $10^{-4}$; there are no significant changes when $\ell$ varies. In Table~\ref{tab:2D-laplace} we report the total CPU time, and the percentage of the CPU time spent on the three main tasks: the calls to Algorithm~\ref{alg:EKSM} ($T_{lr}^{\%}$), the calls to Algorithm~\ref{alg:dac_care} ($T_{d\&c}^{\%}$), and the factorizations and HSS compression of the matrices $A_{ii}-X_{ii}^{(0)}F_{ii}$ ($T_{hss}^{\%}$). The results demonstrate a significant advantage of our approach with respect to a dense solver, which, for coefficients of size larger than $10^4$, requires at least thousands of seconds, e.g. see the numerical results in Table~\ref{tab:hss-complexity1}. The computational cost scales less than quadratically with respect to the matrix size. As the number of recursion levels increases, the cost of the calls to Algorithm~\ref{alg:dac_care} decreases while the majority of the CPU time is spent on solving equations with low-rank right-hand side, and on compressing intermediate results in the HSS format. The latter task is performed via \cite[Algorithm 1]{xia10}; a further speed-up could be obtained by means of compression techniques based on fast matrix-vector multiplications, like the algorithm in \cite{levitt2024linear}. 
}

\begin{filecontents}[overwrite]{test_riccati_2D.dat}
0       38.046  72.531  13.336  6.2925  5.2817e-06      150.67  77.906  11.043  5.7889  6.5295e-06	492.98  80.258  8.4756  5.5968  0.00014535
1       35.682  39.894  31.399  22.101  6.4235e-06      135.19  37.086  26.203  31.192  8.5225e-06  447.67  41.828  19.414  34.196  0.00014543
2       32.438  20.126  40.334  32.331  6.453e-06       129.25  19.731  30.787  43.86   8.5048e-06  414.14  19.679  24.405  51.156  0.00014543
\end{filecontents}

\begin{table}[h!]
	\centering
	\caption{Timings of Algorithm~\ref{alg:care_2d} when solving the  CAREs involving the 2D Laplace operator considered in Section~\ref{sec:2d-laplace}, for various matrix sizes and recursion depths. The table reports the overall cpu time and the percentage spent on the low-rank solver ($T^{\%}_{lr}$), the divide-and-conquer solver ($T^{\%}_{d\&c}$), and the hss compressions ($T^{\%}_{hss}$).}\label{tab:2D-laplace}
	\vspace{.3cm}
	
	\pgfplotstabletypeset[
	every head row/.style={
		before row={
			\toprule
			\multicolumn{13}{c}{\textbf{Performances of \textsc{d\&c\_care\_2D} (Algorithm~\ref{alg:care_2d})}}\\
			\hline			
			\multicolumn{1}{c|}{}&
			\multicolumn{4}{c|}{ $n=100$}&
			\multicolumn{4}{c|}{$n=150$}&
			\multicolumn{4}{c}{$n=200$}\\
		},
		after row = \midrule,
	},
	columns = {0,1,3,2,4,6,8,7,9,11,13,12,14}, 
	columns/0/.style = {column name = $\ell$, column type=c|},	
	columns/1/.style = {column name = Time,precision=1,zerofill, fixed},	
	columns/2/.style = {column name = $T_{d\&c}^{\%}$, precision=0, fixed, column type=c},
	columns/3/.style = {column name = $T_{lr}^{\%}$, precision=0, fixed, column type=c},
	columns/4/.style = {column name = $T_{hss}^{\%}$, precision=0, fixed, column type=c|},	
	columns/6/.style = {column name = Time,precision=1,zerofill, fixed},	
	columns/7/.style = {column name = $T_{d\&c}^{\%}$, precision=0, fixed, column type=c},
	columns/8/.style = {column name = $T_{lr}^{\%}$, precision=0, fixed, column type=c},
	columns/9/.style = {column name = $T_{hss}^{\%}$, precision=0, fixed, column type=c|},	
	columns/11/.style = {column name = Time,precision=1,zerofill, fixed},	
	columns/12/.style = {column name = $T_{d\&c}^{\%}$, precision=0, fixed, column type=c},
	columns/13/.style = {column name = $T_{lr}^{\%}$, precision=0, fixed, column type=c},
	columns/14/.style = {column name = $T_{hss}^{\%}$, precision=0, fixed, column type=c},	
	]{test_riccati_2D.dat}
\end{table}

\upd{
\subsubsection{Allen--Cahn equation in two space dimensions}
Let us repeat the numerical test of Section~\ref{sec:allehn-cahn-1d} in a two-dimensional setting, providing an application of the structured Riccati solvers developed in the previous section.
More precisely, we consider the 2D analogue of equation~\eqref{AC} with $x\in \Omega = [-1,1]^2$, and initial condition chosen as $
y_0(x_1,x_2) = \sin(\pi x_1)\sin(\pi x_2)
$. The discretization of the Laplace operator is performed using a standard five-point finite difference stencil on a uniform Cartesian grid with $n$ equidistant points in both spatial directions. The other parameters are chosen as in Section~\ref{sec:allehn-cahn-1d}. As in the one-dimensional case, the uncontrolled dynamics admits two stable equilibria at $\pm 1$ and an unstable equilibrium at $\overline y = 0$. The objective of the control is therefore to stabilize the unstable equilibrium through a suitable feedback law. The SDRE solutions are approximated via Algorithm~\ref{alg:care_2d}, with $\ell=2$.

The quantitative performance of the proposed approach is reported in Table~\ref{table_AC2}, where we provide the average CPU time required to compute a single feedback control, together with the total cost attained by the controlled and uncontrolled dynamics, for increasing problem sizes $N=n^2$.  
As expected, the computational cost per control increases with the spatial resolution, reflecting the quadratic growth of the state dimension. Nonetheless, the total cost associated with the controlled dynamics remains essentially invariant with respect to $N$, indicating that the SDRE-based feedback law yields a consistent approximation of the underlying infinite-dimensional optimal control problem. In contrast, the uncontrolled dynamics results in significantly larger costs, thereby confirming the effectiveness of the proposed control strategy in stabilizing the unstable equilibrium.

Figure~\ref{fig:AC_2d} illustrates the qualitative behavior of the system for $n=100$.  
The top panels show the initial condition and the uncontrolled evolution, which converges toward one of the stable equilibria $\pm 1$ depending on the sign of the solution. By contrast, the controlled dynamics displayed in the bottom panel demonstrates that the SDRE-based feedback successfully stabilizes the unstable equilibrium $\overline y = 0$, in agreement with the behavior observed in the one-dimensional experiment. These results confirm that the proposed approach preserves its qualitative control properties when passing from one to two spatial dimensions, while remaining computationally feasible due to the structured Riccati solvers introduced in the preceding section.

\begin{table}[hbht]
\centering
\begin{tabular}{c|ccc}

$N$   & CPU per control & Controlled Total cost &  Uncontrolled Total cost
\\
\hline
2500    & 6.0 &   0.0961 &  3.8732\\
10000   & 75.7  &   0.0961  & 3.8846   \\
 \end{tabular}
  \caption{Computational cost and performance of the SDRE-based feedback control for the two-dimensional Allen--Cahn equation, for increasing problem sizes $N=n^2$. The table reports the average CPU time required to compute one feedback control, together with the total cost attained by the controlled and uncontrolled dynamics.}
 \label{table_AC2}
\end{table}

\begin{figure}[htbp]	
\centering
 \includegraphics[width=0.45\textwidth]{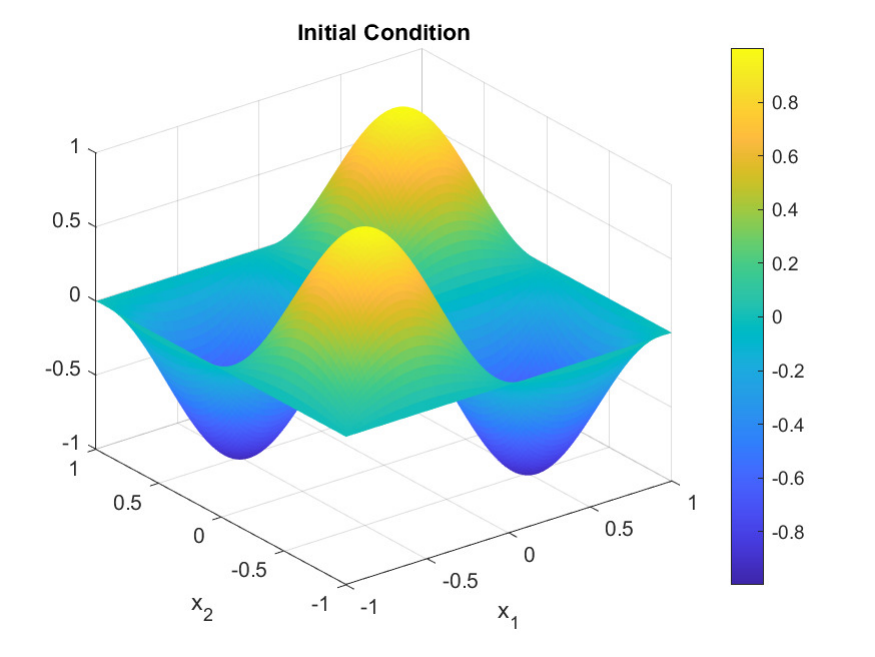}
	\includegraphics[width=0.45\textwidth]{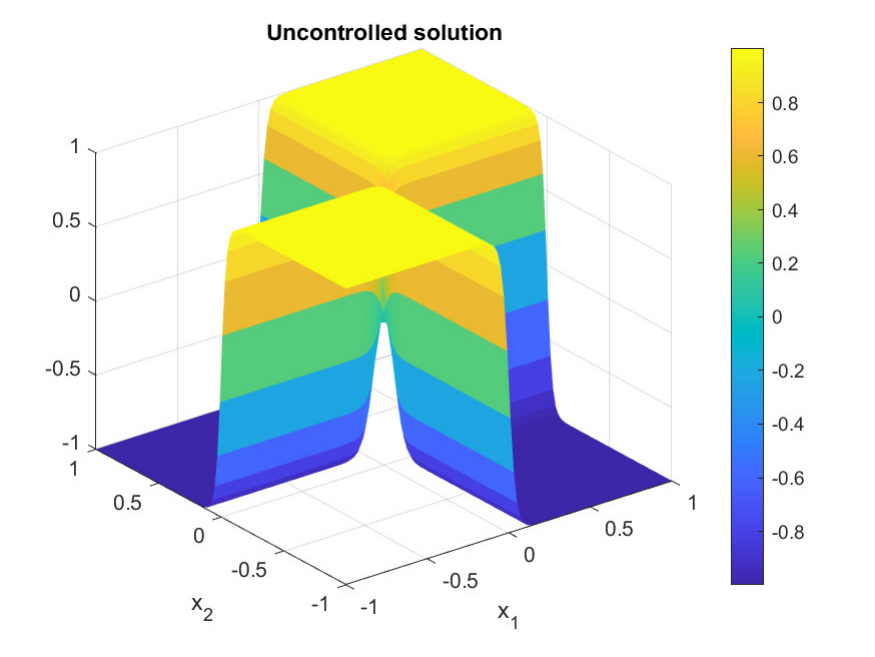}
		\includegraphics[width=0.45\textwidth]{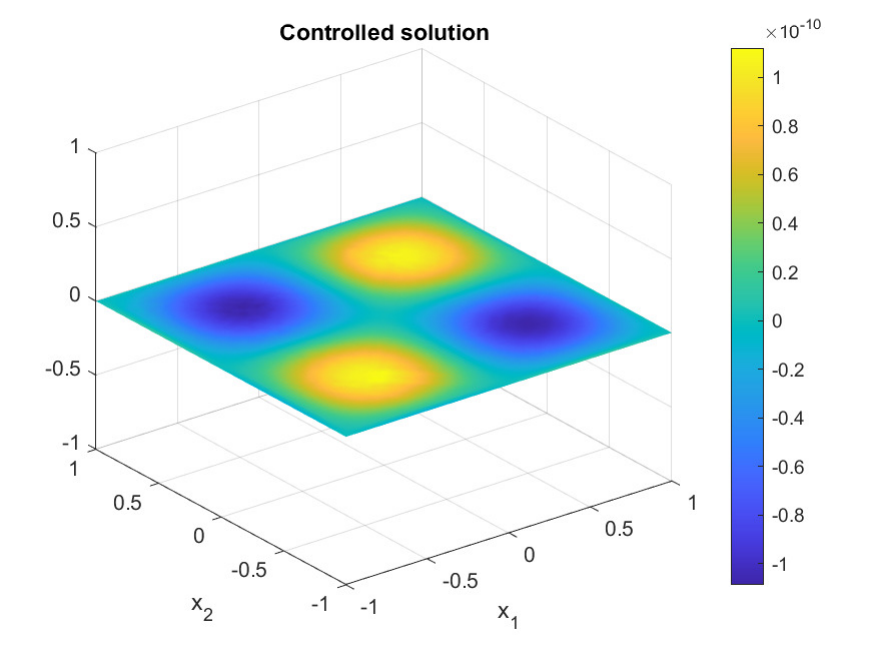}
	\caption{Initial condition (top left), uncontrolled solution (top right) and controlled solution via Algorithm \ref{alg:care_2d} with $n=100$ (bottom).} 
 \label{fig:AC_2d}
\end{figure}

}
\section{Concluding remarks}\label{sec:conclusion}
In this paper, we have demonstrated that under reasonable assumptions for control problems, continuous-time Riccati equations with quasiseparable coefficients have numerically quasiseparable solutions. Our decay bounds establish a connection between the off-diagonal singular values and Zolotarev numbers, which are linked either to the set $\mathcal{W}(L^{-1}AL)$ or to the spectra of the closed-loop matrices associated with the principal submatrices of the Hamiltonian. Additionally, our analysis provides enhanced estimates for the tensor train ranks of the value function corresponding to the CARE solutions.

Building on this theoretical framework, we have introduced practical methodologies to reduce the computational complexity of solving CAREs with large-scale quasiseparable coefficients. Specifically, we proposed two algorithms: one designed for general quasiseparable coefficients and another tailored for banded coefficients. Numerical experiments validate the scalability of these methods across both synthetic and real-world scenarios, particularly in control theory applications involving partial differential equations and agent-based models.

We remark that there are a few points that might deserve further investigations. First, with additional assumptions the parameter $t$ in the bound of Theorem~\ref{thm:sing-decay} might be improved by following the ideas in Remark~\ref{rem:improv1} and Remark~\ref{rem:improv2}, and this would automatically improve the decay bounds given in the corollaries of this result. Second, our bounds ensure the quasiseparable structure of the solution when $F$ is low-rank (apply Theorem~\ref{thm:quasi-zol} as in the proof of Theorem~\ref{thm:ttranks}) or when $F$ \upd{has} full rank (Corollary~\ref{cor:sing-decay3}, and Corollary~\ref{cor:sing-decay4}); however, it seems that the solution inherits the structure also in an intermediate scenario where $F$ is rank  deficient but not low-rank. In addition, according to the numerical results in Figure~\ref{fig:sing-decay2}, the role of $\kappa(F)$ in our upper bounds seems to be an artefact of our proof. Lastly, while testing of the \textsc{tink} algorithm for the banded case, we observed instances where the truncated Newton-Kleinman iteration exhibited a transient phase with large bandwidths, even though the sought solution was approximately banded. It would be of interest to explore whether alternative thresholding techniques could restore computational efficiency in such cases.

%

\bibliography{references}
\bibliographystyle{abbrv}
\end{document}